
\documentclass[journal]{IEEEtran}
%
% If IEEEtran.cls has not been installed into the LaTeX system files,
% manually specify the path to it like:
% \documentclass[journal]{../sty/IEEEtran}

\usepackage{graphics} % for pdf, bitmapped graphics files
\usepackage{epsfig} % for postscript graphics files
\usepackage{graphicx}
\usepackage{mathptmx} % assumes new font selection scheme installed
\usepackage{times} % assumes new font selection scheme installed
\usepackage{amsmath} % assumes amsmath package installed
\usepackage{amssymb}  % assumes amsmath package installed
\usepackage{amsbsy,bm,fixmath}
\usepackage{cases}
\newtheorem{theorem}{Theorem}
\newtheorem{definition}[theorem]{Definition}
\newtheorem{lemma}[theorem]{Lemma}
\newtheorem{proposition}[theorem]{Proposition}

\newtheorem{example}{Example}
\newtheorem{remark}{Remark}

\def\mbA{\mathbold{A}}
\def\mbB{\mathbold{B}}
\def\mbD{\mathbold{D}}
\def\mbK{\mathbold{K}}
\def\mbP{\mathbold{P}}
\def\mbQ{\mathbold{Q}}
\def\mbS{\mathbold{S}}
\def\mbr{\mathbold{r}}
\def\mbx{\mathbold{x}}

\def\endproof{\hfill$\Box$}

% Some very useful LaTeX packages include:
% (uncomment the ones you want to load)

% *** MISC UTILITY PACKAGES ***
%
%\usepackage{ifpdf}
% Heiko Oberdiek's ifpdf.sty is very useful if you need conditional
% compilation based on whether the output is pdf or dvi.
% usage:
% \ifpdf
%   % pdf code
% \else
%   % dvi code
% \fi
% The latest version of ifpdf.sty can be obtained from:
% http://www.ctan.org/pkg/ifpdf
% Also, note that IEEEtran.cls V1.7 and later provides a builtin
% \ifCLASSINFOpdf conditional that works the same way.
% When switching from latex to pdflatex and vice-versa, the compiler may
% have to be run twice to clear warning/error messages.

% *** CITATION PACKAGES ***
%
%\usepackage{cite}
% cite.sty was written by Donald Arseneau
% V1.6 and later of IEEEtran pre-defines the format of the cite.sty package
% \cite{} output to follow that of the IEEE. Loading the cite package will
% result in citation numbers being automatically sorted and properly
% "compressed/ranged". e.g., [1], [9], [2], [7], [5], [6] without using
% cite.sty will become [1], [2], [5]--[7], [9] using cite.sty. cite.sty's
% \cite will automatically add leading space, if needed. Use cite.sty's
% noadjust option (cite.sty V3.8 and later) if you want to turn this off
% such as if a citation ever needs to be enclosed in parenthesis.
% cite.sty is already installed on most LaTeX systems. Be sure and use
% version 5.0 (2009-03-20) and later if using hyperref.sty.
% The latest version can be obtained at:
% http://www.ctan.org/pkg/cite
% The documentation is contained in the cite.sty file itself.

% *** GRAPHICS RELATED PACKAGES ***
%
\ifCLASSINFOpdf
  % \usepackage[pdftex]{graphicx}
  % declare the path(s) where your graphic files are
  % \graphicspath{{../pdf/}{../jpeg/}}
  % and their extensions so you won't have to specify these with
  % every instance of \includegraphics
  % \DeclareGraphicsExtensions{.pdf,.jpeg,.png}
\else
  % or other class option (dvipsone, dvipdf, if not using dvips). graphicx
  % will default to the driver specified in the system graphics.cfg if no
  % driver is specified.
  % \usepackage[dvips]{graphicx}
  % declare the path(s) where your graphic files are
  % \graphicspath{{../eps/}}
  % and their extensions so you won't have to specify these with
  % every instance of \includegraphics
  % \DeclareGraphicsExtensions{.eps}
\fi
\hyphenation{op-tical net-works semi-conduc-tor}

\begin{document}

%
% paper title
% Titles are generally capitalized except for words such as a, an, and, as,
% at, but, by, for, in, nor, of, on, or, the, to and up, which are usually
% not capitalized unless they are the first or last word of the title.
% Linebreaks \\ can be used within to get better formatting as desired.
% Do not put math or special symbols in the title.
\title{Linear Quadratic Mean Field Games:
Asymptotic Solvability and Relation to the Fixed Point Approach\thanks{This work was  supported in part by Natural Sciences and Engineering
Research Council (NSERC) of Canada under a Discovery Grant and a Discovery Accelerator Supplements Program.}}
%
%
% author names and IEEE memberships
% note positions of commas and nonbreaking spaces ( ~ ) LaTeX will not break
% a structure at a ~ so this keeps an author's name from being broken across
% two lines.
% use \thanks{} to gain access to the first footnote area
% a separate \thanks must be used for each paragraph as LaTeX2e's \thanks
% was not built to handle multiple paragraphs
%

\author{Minyi~Huang
        and~Mengjie~Zhou% <-this % stops a space
\thanks{M. Huang and M. Zhou are with the School of
Mathematics and Statistics, Carleton University, Ottawa, ON K1S 5B6,
Canada (mhuang@math.carleton.ca, mengjiezhou@cmail.carleton.ca).}% <-this % stops a space
\thanks{IEEE Trans. Autom. Control, submitted Apr. 2018; revised Dec. 2018;
accepted May 2019.   }
}

\maketitle

% As a general rule, do not put math, special symbols or citations
% in the abstract or keywords.
\begin{abstract}
Mean field game theory has been developed largely  following two
routes. One of them, called the direct approach,  starts by solving a large-scale game and
next derives a set of limiting equations as the population size
tends to infinity. The second route is to apply mean field approximations
and formalize a fixed point problem by analyzing the best response of
a representative player.
This paper addresses the connection and difference of the two approaches
in a linear quadratic (LQ) setting.
 We first introduce an asymptotic solvability notion for the direct approach,
which means for all sufficiently large population sizes, the corresponding
game has a set of feedback Nash strategies in addition to a mild regularity requirement.
We provide a necessary and sufficient condition for asymptotic solvability
and show that in this case the solution
converges to a mean field limit. This is accomplished by developing a
re-scaling method to derive a low dimensional ordinary differential equation
(ODE) system, where a non-symmetric Riccati ODE has a central role.
We next compare with the
fixed point approach which determines a
two point boundary value (TPBV) problem,
and  show that asymptotic solvability implies feasibility of the
fixed point approach, but the converse is not true.
We further address non-uniqueness in the
fixed point approach and examine the long time behavior of the non-symmetric Riccati ODE in the asymptotic solvability problem.
\end{abstract}

% Note that keywords are not normally used for peerreview papers.
\begin{IEEEkeywords}
Asymptotic solvability, direct approach, fixed point approach, linear quadratic, mean field game, re-scaling, Riccati differential equation.
\end{IEEEkeywords}

% For peer review papers, you can put extra information on the cover
% page as needed:
% \ifCLASSOPTIONpeerreview
% \begin{center} \bfseries EDICS Category: 3-BBND \end{center}
% \fi
%
% For peerreview papers, this IEEEtran command inserts a page break and
% creates the second title. It will be ignored for other modes.
\IEEEpeerreviewmaketitle

\section{Introduction}
% The very first letter is a 2 line initial drop letter followed
% by the rest of the first word in caps.
%
% form to use if the first word consists of a single letter:
% \IEEEPARstart{A}{demo} file is ....
%
% form to use if you need the single drop letter followed by
% normal text (unknown if ever used by the IEEE):
% \IEEEPARstart{A}{}demo file is ....
%
% Some journals put the first two words in caps:
% \IEEEPARstart{T}{his demo} file is ....
%
% Here we have the typical use of a "T" for an initial drop letter
% and "HIS" in caps to complete the first word.

\IEEEPARstart{M}{ean} field game (MFG) theory has undergone a phenomenal growth. It provides a powerful methodology
for tackling complexity in large-population noncooperative decision problems.
The readers are referred to \cite{BFY13,CHM17,C12,CD18,GS14} for an overview of the theory and applications.
The past developments have largely
followed two routes \cite{HCM07,HMC06,LL07} which are called,  respectively, the bottom-up and top-down
 approaches in \cite{CHM17}.

 One route starts by formally solving an $N$-player game to obtain a large coupled solution equation system. The next step is to derive a limit for the solution by taking $N\to \infty$ \cite{LL07},
 which can be called the direct
(or bottom up) approach; see route one in Fig. \ref{fig:dia}.
Another route is to solve an optimal control problem of a single agent
based on consistent mean field approximations and formalize a fixed point problem to determine the
mean field, and this is called the fixed point (or top-down) approach
  \cite{HCM07,HMC06} and also called Nash
certainty equivalence in  \cite{HMC06}; see route
two in Fig. \ref{fig:dia}. The solution of the fixed point problem may be used to
design decentralized strategies in the original large but finite population model to achieve an $\epsilon$-Nash equilibrium \cite{HCM07}.  Under such a set of strategies, each player can further improve little even if it can access centralized information of all players.
 Compared with Nash strategies determined under centralized information, the above solution has much lower complexity in its computation and implementation.

 The reader may consult further literature on the direct approach \cite{C12} and the fixed point approach \cite{BFY13,BSYY16,CD18,KTY14,LZ08}. Also see \cite{F14,L16} for the direct approach in a probabilistic framework.
 We note that the diagram in Fig. \ref{fig:dia} displays the basic theoretic framework of mean field games with all players being comparably small, called peers. When the model involves a major player or common
noise, the analysis has been extended for  the direct approach \cite{CDLL15} and the fixed point approach
 \cite{BFY13,CK17,CD18,CZ16,H10,NC13}.

So far the investigation of the connection and difference
between the two approaches regarding their scope of applicability
is scarce. Their systematic comparison  is generally
difficult since in the literature very often the analysis in each approach
is carried out under various sufficient
conditions.
 In this work we contribute in this direction
within the framework of linear-quadratic (LQ) mean field games
with a finite time horizon. The analysis of mean field games in the LQ setting has attracted substantial interest due to their appealing analytical structure
\cite{BSYY16,CK17,HWW16,HCM07,LZ08,MB15,NCMH13,SML18,TZB14,WZ13}. Specifically, the decentralized strategy of an individual player may be determined in a linear feedback form. Partial state information is  considered in \cite{CK17,HWW16}, and  \cite{HWW16} adopts linear backward stochastic differential equations
to model state dynamics.

In  this paper  we first study an asymptotic solvability problem initially introduced in \cite{HZ18},
 which may be viewed as an instance of the direct (i.e., bottom-up) approach.
We adopt an appropriately defined asymptotic
solvability notion for the sequence of LQ games with increasing population sizes so that a neat necessary and sufficient condition can be derived.
This will on one hand further our understanding of
the direct approach and on the other offer a foundation
for a thorough comparison with the fixed point approach.
We start with an entirely
conventional solution of the game by dynamic programming, which leads to  a set of coupled Riccati ODEs.
It turns out that the necessary and sufficient condition for asymptotic solvability
is characterized by a low dimensional non-symmetric Riccati ODE  derived by a novel re-scaling
technique. The methodology of identifying low dimensional dynamics
to capture essential information on high dimensional dynamical
behavior shares similarity to the statistical physics literature on mean field
oscillator models \cite{MBS09,OA08,PM14}.
This approach is also closely related to  an early problem of mean field social optimization, which studies a high dimensional algebraic Riccati equation (ARE) and uses
symmetry for dimension reduction \cite[Sec. 6.3]{H03}.
Other related works include \cite{H15,P14,P15}.
An optimal control problem for a set of symmetric agents with mean field coupling
is solved in \cite{H15} by a large-scale Riccati ODE, and a mean field limit is derived.
An LQ Nash game of infinite time horizon is analyzed in \cite{P14} where the number of players increases to infinity. By postulating the strategies of all players and  examining the control problem of a fixed player,  a family of low dimensional control problems and their parametrized AREs are solved by applying an implicit function theorem for which sufficient conditions are obtained for large population sizes. The solvability of LQ games with increasing population sizes in the set-up of \cite{LL07} is studied in \cite{P15} analyzing $2N$-coupled steady-state Hamilton-Jacobi-Bellman (HJB) and Fokker-Planck-Kolmogorov (FPK) equations under some algebraic conditions, where each player's control is restricted to be local state feedback from the beginning.

Subsequently the paper  investigates the
relation of the two fundamental approaches \cite{HCM07,HMC06,LL07} shown in Fig. \ref{fig:dia},
which has been made possible by the solution of the asymptotic solvability problem. In so doing, we first revisit  the fixed point approach
for the mean field game, and determine the necessary and sufficient condition
for the solvability of the resulting two point boundary value (TPBV) problem.
It is shown that asymptotic solvability provides a sufficient condition
for the TPBV problem  to be
solvable and in fact uniquely solvable in this case; this is due to the fact that one can use a non-symmetric Riccati ODE to decouple and solve a general linear TPBV problem \cite{F02}. However, there exist scenarios
for our TPBV problem to be solvable but asymptotic solvability fails.
This suggests non-equivalence of the two approaches in general.
We make  a further connection with the original work \cite{HCM07}, which applies the fixed point approach under a contraction condition; we show in this case asymptotic solvability  holds for the sequence of games.

Our study of the asymptotic solvability problem and
the subsequent comparison of the two fundamental approaches  provides new insights into the relation between the infinite population mean field game and large finite population games. Historically, the study of the relation between
large finite population games and their infinite population limit has been
a subject of great interest and importance \cite{A64,CP10,G84,HM85,MC83} although this is usually for static games.

\begin{figure}[t]
\begin{center}
\begin{tabular}{c}
\psfig{file=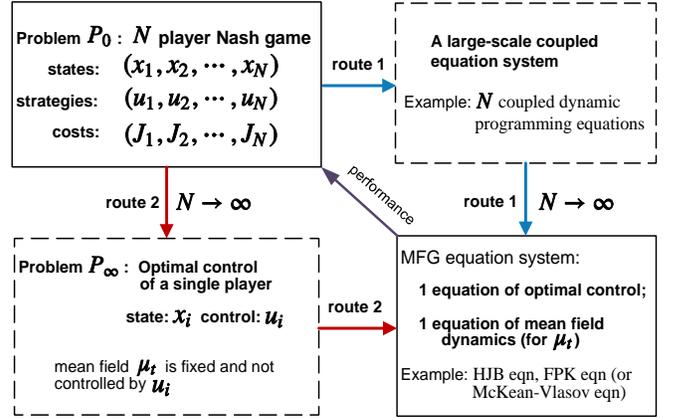, width=3.4in, height=2.2in}
\end{tabular}
\end{center}
\caption{The two fundamental approaches: The direct (or bottom-up) approach (see route 1) and  the fixed point (or top-down) approach (see route 2)}
 \label{fig:dia}
\end{figure}

For the TPBV problem in the fixed point approach we
further examine the non-uniqueness issue, which has been of
interest in the MFG literature;
see non-uniqueness results for nonlinear MFG models \cite{BF17,CT17,GNP17} and for an LQ example  with a non-quadratic terminal cost \cite{T18}.
Non-uniqueness has been well studied in the traditional literature of
LQ dynamic games; see \cite{E82,E00}.
 Finally, we analyze the long time behavior of
the non-symmetric Riccati ODE in the asymptotic solvability problem. The analysis
is related to a non-symmetric algebraic Riccati equation (NARE) and faces the issue of solution selection.
We introduce the notion of a stabilizing solution for the NARE and
derive the necessary and sufficient condition for its existence and uniqueness.

The main contributions of the paper are outlined as follows:
\begin{enumerate}
\item We study an $N$-player LQ Nash game and introduce the notion of asymptotic solvability, which  can be regarded as a direct approach in mean field games.

\item By a re-scaling technique,  a necessary and sufficient condition for asymptotic solvability is obtained in terms of a non-symmetric Riccati ODE.
 This lays down a foundation to address the exact relation of two fundamental approaches in mean field games: the direct approach and the fixed point approach. We show asymptotic solvability implies unique solvability of the TPBV problem in
the fixed point approach. We further show that a contraction condition of the fixed point approach introduced in the
original work \cite{HCM07} implies asymptotic solvability. We further determine conditions for non-uniqueness to occur in the fixed point approach.

\item The long time behavior of the non-symmetric Riccati ODE in the direct approach is studied. A necessary and sufficient algebraic condition is obtained for it to have a stabilizing solution.

\end{enumerate}

We make some convention on notation.
 Throughout the paper, $E$ is reserved for denoting the mean of a random variable or a random vector. For symmetric matrix $S\ge 0$, we may write $x^TSx=|x|_S^2$. We denote by ${\bf 1}_{k\times l}$  a $k\times l$ matrix with all entries equal to 1, by $\otimes$ the Kronecker product, and by the column  vectors  $\{e_1^k, \ldots, e_k^k\}$  the canonical basis of $\mathbb {R}^k$.  We may use a subscript $n$ to indicate the identity matrix $I_n$ to be $n\times n$. For a vector or matrix $Z$, $|Z|$  stands for its Euclidean norm.
 For an $l\times m$ real matrix $Z =(z_{ij})_{1\le i\le l,1\le j\le m}$, denote the $l_1$-norm $\|Z\|_{l_1}= \sum_{i,j}|z_{ij}|$.

The organization of the paper is as follows.
Section \ref{sec:mod} describes the LQ Nash game together with its solution via dynamic programming and Riccati ODEs.
Section \ref{sec:asy} presents the necessary and sufficient condition for asymptotic  solvability and derives decentralized strategies. We revisit the fixed point approach  in Section
\ref{sec:fp} and examine its relation to asymptotic solvability. To further study the relation of the two approaches,  Section \ref{sec:scal} develops in-depth analysis of the scalar individual state case. The long time behavior of the non-symmetric Riccati ODE is examined in Section \ref{sec:long}.
 Illustrative  examples are provided in Section \ref{sec:nume}.
Section \ref{sec:con} concludes the paper.

\section{The LQ Nash Game} \label{sec:mod}

Consider a population of $N$ players (or agents) denoted by ${\cal A}_i$,  $1\le i\le N$.
The state process $X_i(t)$ of ${\cal A}_i$ satisfies the following
stochastic differential equation (SDE)
\begin{align}
dX_i(t)=\big(AX_i(t)+Bu_i(t)+GX^{(N)}(t)\big)dt+DdW_i(t)&, \label{stateXi}\\
  1\le i\le N,  & \nonumber
\end{align}
where  we have state  $X_i\in \mathbb{R}^n$, control
$u_i\in\mathbb{R}^{n_1}$, and the coupling term
 $ X^{(N)}=\frac{1}{N}\sum_{k=1}^N X_k$. The constant matrices $A$, $B$, $G$, $D$ have compatible dimensions.
  The initial states $\{X_i(0), 1\le i\le N\}$
  are independent with $EX_i(0)=x_i(0)$ and finite second moment.
 The $N$ standard  $n_2$-dimensional Brownian motions $\{W_i, 1\le i\le N\}$ are independent and also independent of the  initial states.
The cost of player ${\cal A}_i$ in the Nash game is given by
\begin{align}\label{costJi}
J_i =\  & E\int_0^T \Big(  |X_i(t)-\Gamma X^{(N)}(t)-\eta|_Q^2+u_i^T(t) R u_i(t)\Big)dt \nonumber\\
&+E|X_i(T)-\Gamma_f X^{(N)}(T)-\eta_f|_{Q_f}^2.
\end{align}
The constant matrices (or vectors)
 $\Gamma$, $Q$, $R$, $\Gamma_f$, $Q_f$, $\eta$, $\eta_f$ above
have compatible dimensions, and we have $Q\ge 0$, $R>0$, $Q_f\ge 0$ for these symmetric matrices.
For notational simplicity, we only consider constant parameters
for the model. Except for long time behavior in Section
\ref{sec:long}, our analysis and results can be easily extended to
the case of time-dependent parameters.

Define
\begin{align}
&X(t)=
 \begin{bmatrix} X_1(t) \\
  \vdots \\
   X_N(t)
 \end{bmatrix}\in \mathbb{R}^{Nn},\nonumber
\quad W(t)
=\begin{bmatrix}
W_1(t) \\
 \vdots \\
  W_N(t)
\end{bmatrix}\in \mathbb{R}^{Nn_2},\nonumber   \\
&\widehat \mbA = \mbox{diag}[A, \cdots, A]
+{\bf 1}_{n\times n}\otimes
\frac{G}{N}\in\mathbb{R}^{Nn\times Nn},\nonumber \quad \\
&\widehat \mbD =\mbox{diag}[D, \cdots, D]\in \mathbb{R}^{Nn\times Nn_2},\nonumber \\
&\mbB_k = e_k^N\otimes B\in\mathbb{R}^{Nn\times n_1}, \qquad 1\le k\le N. \nonumber
\end{align}

Now we write system of SDEs in \eqref{stateXi} in the form
\begin{align}\label{bigx}
dX(t)=\Big(\widehat \mbA X(t)+\sum_{k=1}^N \mbB_k u_k(t)\Big)dt+\widehat \mbD dW(t).
\end{align}
Under closed-loop perfect state (CLPS) information, we denote the value function of ${\cal A}_i$  by $V_i(t,\mbx)$, $1\le i\le N$, which corresponds to the initial condition $X(t)= \mbx=(x_1^T, \ldots, x_N^T)^T$ and a cost evaluated on $[t,T]$ in place of \eqref{costJi}.
The set of value functions is determined by the system of  HJB equations
\begin{align}\label{DE}
&0= \frac{\partial V_i}{\partial t}+\min_{u_i\in {\mathbb R}^{n_1}}\Bigg(\frac{\partial^T V_i}{\partial \mbx }\big(\widehat {\mbA}\mbx+\sum_{k=1}^N {\mbB}_k u_k\big)
+u_i^T R u_i \nonumber \\
   &\qquad+|x_i-\Gamma x^{(N)}-\eta|_Q^2  +\frac{1}{2}\mbox{Tr}\big
   ({\widehat{\mbD}^T (V_i)_{\mbx\mbx} \widehat{\mbD}}\big)\Bigg), \\
&V_i(T,\mbx)=|x_i-\Gamma_f x^{(N)}-\eta_f|_{Q_f}^2, \qquad  1\le i\le N, \nonumber
\end{align}
where $x^{(N)}=(1/N)\sum_{k=1}^N x_k$ and the minimizer is
 \begin{align}
 u_i=-\frac{1}{2} R^{-1} {\mbB}_i^T \frac{\partial V_i}
 {\partial \mbx},\qquad 1\le i\le N. \label{uminx}
 \end{align}
   Next we substitute \eqref{uminx} into \eqref{DE} to obtain
\begin{align}\label{DE2}
0 = &\frac{\partial V_i}{\partial t}+\frac{\partial^T V_i}{\partial \mbx}\big({\widehat \mbA}\mbx
-\sum_{k=1}^N \frac{1}{2} {\mbB}_k R^{-1} {\mbB}_k^T \frac{\partial V_k}{\partial \mbx}\big) \nonumber \\
&+|x_i-\Gamma x^{(N)}-\eta|_Q^2  \nonumber \\
   &+\frac{1}{4}\frac{\partial^T V_i}{\partial \mbx}{\mbB}_i R^{-1} \mbB_i^T\frac{\partial V_i}{\partial \mbx}+\frac{1}{2}\mbox{Tr}\big({{\widehat \mbD}^T (V_i)_{\mbx\mbx} {\widehat \mbD}}\big).
\end{align}

Denote
\begin{align*}
&\mbK_i =[0,\cdots,0, {I}_n,0,\cdots,0]-\frac{1}{N}
[\Gamma,\Gamma,\cdots,\Gamma],\\
& \mbK_{if} =[0,\cdots,0, {I}_n,0,\cdots,0]-\frac{1}{N}
[\Gamma_f,\Gamma_f,\cdots,\Gamma_f],\\
& \mbQ_i=\mbK_i^T Q \mbK_i, \quad  \mbQ_{if}=\mbK_{if}^T Q_f \mbK_{if},
\end{align*}
where $I_n$ is the $i$th submatrix.
We write
\begin{align}\label{sym}
|x_i-\Gamma x^{(N)}-\eta|_Q^2  = &\mbx^T {\mbQ}_i \mbx - 2\mbx^T
\mbK_i^TQ\eta
 + \eta^T Q\eta,
    \end{align}
and write $|x_i-\Gamma_f x^{(N)}-\eta_f|_{Q_f}^2 $   in a similar form.

Suppose $V_i(t,\mbx)$ has the following form
\begin{align}\label{Vform}
&V_i(t,\mbx)=\mbx^T {\mbP}_i(t) \mbx+2{\mbS}_i^T(t) \mbx+\mbr_i(t),
\end{align}
where $\mbP_i$ is symmetric.
Then
\begin{align}  \label{1st}
\frac{\partial V_i}{\partial \mbx}=2{\mbP}_i(t)\mbx+2{\mbS}_i(t),\quad
\frac{\partial^2 V_i}{\partial \mbx^2}=2\mbP_i(t).
\end{align}

We substitute \eqref{Vform} and \eqref{1st}  into \eqref{DE2} and derive the equation systems:
\begin{align}\label{DE3_P}
\begin{cases}
\dot{\mbP}_i(t) =  - \Big({\mbP}_i(t)\widehat{\mbA}+\widehat{\mbA}^T
\mathbb{\mbP}_i(t)\Big)+\\
                          \qquad\qquad     \Big({\mbP}_i(t)\sum_{k=1}^N
                             {\mbB}_k R^{-1} {\mbB}_k^T {\mbP}_k(t)\\
                      \qquad\qquad         +\sum_{k=1}^N {\mbP}_k(t){\mbB}_k R^{-1} {\mbB}^T_k {\mbP}_i(t)\Big) \\
                         \qquad\qquad      - {\mbP}_i(t){\mbB}_i R^{-1} {\mbB}_i^T
                             {\mbP}_i(t)- {\mbQ}_i   , \\
 {\mbP}_i(T) = {\mbQ}_{if},
 \end{cases}
\end{align}

\begin{align}\label{DE3_S}
\begin{cases}
\dot{{\mbS}}_i(t) = -\widehat {\mbA}^T {\mbS}_i(t)  -
{\mbP}_i(t){\mbB}_i R^{-1} {\mbB}_i^T {\mbS}_i(t)\\
                            \qquad\qquad    +{\mbP}_i(t) \sum_{k=1}^N {\mbB}_k R^{-1} {\mbB}_k^T {\mbS}_k(t)\\
                             \qquad\qquad   + \sum_{k=1}^N {\mbP}_k(t){\mbB}_k R^{-1}{\mbB}^T_k {\mbS}_i(t)  \\
       \qquad\qquad   +\mbK_i^T Q\eta , \\
{\mbS}_i(T)= -\mbK_{if}^T Q_f\eta_f,
\end{cases}
\end{align}

\begin{align}\label{DE3_gamma}
\begin{cases}
\dot{\mbr}_i(t) =  2{{\mbS}_i}^T(t)\sum_{k=1}^N {\mbB}_k R^{-1}
{\mbB}_k^T {\mbS}_k(t) \\
                   \qquad\qquad  - {\mbS}_i^T(t){\mbB}_i R^{-1} {\mbB}_i^T{\mbS}_i(t)\\
                    \qquad\qquad         - \eta^T Q \eta   -\mbox{Tr}\big(\widehat{\mbD}^T
                          {\mbP}_i(t)\widehat{\mbD}\big), \\
 \mbr_i(T)=\eta_f ^T Q_f \eta_f.
\end{cases}
\end{align}

\begin{remark}\label{remark:P}
If \eqref{DE3_P} has a solution $(\mbP_1, \cdots, \mbP_N)$  on $[\tau,T]\subseteq [0,T]$, such a solution is unique due to the local Lipschitz continuity of the vector field \cite{H69}. Taking transpose on both sides of \eqref{DE3_P} gives an ODE system for $\mbP_i^T$, $1\le i\le N$, which shows that $(\mbP_1^T, \cdots, \mbP_N^T)$ still satisfies \eqref{DE3_P}. So the ODE system \eqref{DE3_P}  guarantees  each $\mbP_i$ to be symmetric
\end{remark}

\begin{remark} \label{remark:PSr}
 If \eqref{DE3_P} has a unique
solution $(\mbP_1,\cdots ,\mbP_N)$ on $[0, T]$, then we can uniquely solve $(\mbS_1, \cdots, \mbS_N)$ and $(\mbr_1, \cdots, \mbr_N)$ by using  linear ODEs.
\end{remark}

For the $N$-player  Nash  game, we consider CLPS information, so that the state vector $X(t)$ is available to each player.

\begin{theorem}\label{theorem:Nash}
Suppose that \eqref{DE3_P} has a unique solution $(\mbP_1,\cdots ,\mbP_N)$ on $[0,T]$. Then we can uniquely solve \eqref{DE3_S}--\eqref{DE3_gamma}, and the game of $N$ players has a set of feedback Nash  strategies given by
$$
u_i=-R^{-1} \mbB_i^T (\mbP_i X(t) +\mbS_i), \qquad 1\le i\le N.
$$
\end{theorem}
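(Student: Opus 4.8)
The plan is to separate the claim into two independent parts: the unique solvability of \eqref{DE3_S}--\eqref{DE3_gamma} given $(\mbP_1,\dots,\mbP_N)$, and a stochastic verification that the stated feedback laws constitute a Nash equilibrium. The first part is essentially Remark \ref{remark:PSr}. Once $(\mbP_1,\dots,\mbP_N)$ exists on $[0,T]$, each $\mbP_i$ is $C^1$ and hence bounded on $[0,T]$, so every coefficient multiplying the unknowns $\mbS_k$ in \eqref{DE3_S} is continuous on $[0,T]$; therefore \eqref{DE3_S} is a linear ODE system with terminal data $\mbS_i(T)=-\mbK_{if}^TQ_f\eta_f$ and admits a unique solution $(\mbS_1,\dots,\mbS_N)$ on the whole interval. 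With $(\mbP_i,\mbS_i)$ determined, the right-hand side of \eqref{DE3_gamma} is a known continuous function of $t$, so each $\mbr_i$ is obtained uniquely by backward integration from $\mbr_i(T)=\eta_f^TQ_f\eta_f$.

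For the Nash property I would first observe that, by the very way \eqref{DE3_P}--\eqref{DE3_gamma} were derived, the quadratic function $V_i$ in \eqref{Vform} solves the coupled HJB system \eqref{DE}, equivalently \eqref{DE2}, with pointwise minimizer $u_i^\ast=-R^{-1}\mbB_i^T(\mbP_i\mbx+\mbS_i)$. Fix a player $i$ and suppose every other player $j\ne i$ uses $u_j^\ast$; then player $i$ faces an LQ stochastic control problem with dynamics \eqref{bigx}. The key step is a completion-of-squares verification: I apply It\^o's formula to $V_i(t,X(t))$ along the trajectory generated by an arbitrary admissible $u_i$ together with $u_{-i}^\ast$, and take expectations. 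Because the minimand in \eqref{DE} exceeds its minimum by exactly $|u_i-u_i^\ast|_R^2$, the $dt$ drift of $V_i(t,X(t))$ reduces to $|u_i-u_i^\ast|_R^2-u_i^TRu_i-|X_i-\Gamma X^{(N)}-\eta|_Q^2$, and using the terminal conditions $\mbP_i(T)=\mbQ_{if}$, $\mbS_i(T)=-\mbK_{if}^TQ_f\eta_f$, $\mbr_i(T)=\eta_f^TQ_f\eta_f$ to identify $V_i(T,X(T))$ with the terminal cost yields
\begin{align}
J_i(u_i,u_{-i}^\ast)=E\,V_i(0,X(0))+E\int_0^T|u_i(t)-u_i^\ast(t)|_R^2\,dt.\nonumber
\end{align}
Since $R>0$, this shows $u_i^\ast$ is the unique optimal response with optimal cost $E\,V_i(0,X(0))$; as $i$ is arbitrary, $(u_1^\ast,\dots,u_N^\ast)$ is a feedback Nash equilibrium.

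The main obstacle is not the algebra but the rigor of this verification. Two points require care. First is admissibility: one must restrict to controls $u_i$ for which \eqref{bigx} is well posed with a square-integrable state and finite cost, so that all the expectations above are finite. Second, and central, is showing that the It\^o integral $\int_0^t(\partial_{\mbx}V_i)^T\widehat{\mbD}\,dW$ is a true martingale and hence has zero mean, so that only the drift survives under expectation. Since $\partial_{\mbx}V_i=2\mbP_i X+2\mbS_i$ is affine in $X$ with $\mbP_i,\mbS_i$ bounded on $[0,T]$, the bound $E\int_0^T|(\partial_{\mbx}V_i)^T\widehat{\mbD}|^2\,dt<\infty$ follows from the finite second moment of the closed-loop state; alternatively a localization by stopping times $\tau_m\uparrow T$ followed by a limiting argument avoids any a priori integrability. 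Once these standard stochastic-control technicalities are settled, the conclusion follows directly from the completion-of-squares identity.
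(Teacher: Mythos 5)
Your proposal is correct, but it takes a different route from the paper in one specific sense: the paper does not prove this theorem at all from first principles --- its entire proof is a citation to standard results on LQ feedback Nash equilibria (Theorem 6.16 and Corollaries 6.5, 6.12 of Ba\c{s}ar and Olsder's \emph{Dynamic Noncooperative Game Theory}), whereas you reconstruct the verification argument that underlies those cited results. Your decomposition is sound: the unique solvability of \eqref{DE3_S}--\eqref{DE3_gamma} is exactly the content of Remark \ref{remark:PSr} (linear ODEs with continuous coefficients once the $\mbP_i$ are fixed), and your completion-of-squares identity
\begin{align}
J_i(u_i,u_{-i}^\ast)=E\,V_i(0,X(0))+E\int_0^T|u_i(t)-u_i^\ast(t)|_R^2\,dt\nonumber
\end{align}
is the standard It\^o-based verification for player $i$'s best response when the others use the affine feedback laws; since $R>0$, it yields both optimality and uniqueness of the response, hence the Nash property. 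Your handling of the two technical points (admissibility of deviations and the true-martingale property of the stochastic integral, via the affine gradient, bounded $\mbP_i,\mbS_i$, square-integrable closed-loop state, or localization) is the right way to make this rigorous. What the paper's approach buys is brevity and delegation of these technicalities to a standard reference; what yours buys is a self-contained proof that makes explicit where each terminal condition and the coupled HJB structure \eqref{DE2} enter --- in particular, that $V_i$ in \eqref{Vform} satisfies \eqref{DE2} precisely because \eqref{DE3_P}--\eqref{DE3_gamma} were obtained by matching quadratic, linear, and constant terms. No gap.
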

\begin{proof} This theorem follows the standard results in \cite[Theorem 6.16, Corollaries 6.5 and 6.12]{BO99}.
\end{proof}

By Theorem \ref{theorem:Nash},   the solution of the feedback Nash strategies  completely reduces to the study  of \eqref{DE3_P}.
For this reason, our subsequent analysis starts
 by analyzing \eqref{DE3_P}.

\section{Asymptotic Solvability}
\label{sec:asy}

\begin{definition}\label{definition:as0}
The sequence of  Nash games \eqref{stateXi}--\eqref{costJi} with  closed-loop perfect state information  has asymptotic solvability if there exists $N_0$ such that for all $N\ge N_0$,
 $(\mbP_1,\cdots,\mbP_N)$ in \eqref{DE3_P} has a solution on $[0,T]$ and
\begin{align}\label{main_conl1}
\sup_{N\ge N_0}\sup_{1\le i\le N, 0\leq t\leq T}  \|\mbP_i(t)\|_{l_1} <\infty.
\end{align}
\end{definition}

 Definition \ref{definition:as0} only involves the Riccati equations. This is sufficient due to Remark \ref{remark:PSr}. The boundedness condition
\eqref{main_conl1} is to impose certain regularity of the solutions, which is necessary for
studying the asymptotic behavior of the system when $N\to \infty$.

Let the $Nn\times Nn$ identity matrix be partitioned in the form:
\begin{align*}
I_{Nn}=\begin{bmatrix}
I_n   &0    &\cdots        &0    \\
0  &I_n    &\cdots         &0     \\
\vdots  &\vdots     &\ddots        &\vdots     \\
0  &0    &0         &I_n
\end{bmatrix}.
\end{align*}
For $1\le i\ne j\le N$,
exchanging the $i$th and $j$th rows of submatrices in $I_{Nn}$,  let $J_{ij}$ denote the resulting matrix. For instance, we have
\begin{align*}
J_{12}=\begin{bmatrix}
0   &I_n    &\cdots        &0    \\
I_n  &0    &\cdots         &0     \\
\vdots  &\vdots     &\ddots        &\vdots     \\
0  &0    &0         &I_n
\end{bmatrix}.
\end{align*}
It is easy to check that $J_{ij}^T=J_{ij}^{-1}=J_{ij}$.

\begin{theorem}\label{theorem:Prep3}
 We assume that \eqref{DE3_P} has  a solution
 $(\mbP_1(t), \cdots,
\mbP_N(t))$  on $[0,T]$. Then the following holds.

i) ${\mbP}_1(t)$  has the  representation
\begin{align}\label{P_matrix3}
{\mbP}_1(t)=\begin{bmatrix}
\Pi_1(t) &\Pi_2(t) &\Pi_2(t)&\cdots &\Pi_2(t) \\
\Pi_2^T(t) &\Pi_3(t) &\Pi_3(t)&\cdots &\Pi_3(t)\\
\Pi_2^T(t) &\Pi_3(t)&\Pi_3(t) &\cdots &\Pi_3(t)\\
\vdots          & \vdots        &  \vdots        &\ddots &\vdots \\
\Pi_2^T(t) &\Pi_3(t) &\Pi_3(t)&\cdots &\Pi_3(t)
\end{bmatrix},
\end{align}
where $\Pi_1$ and $\Pi_3$ are $n\times n$ symmetric matrices.

ii) For $i>1$,  $\mbP_i(t)= J_{1i}^T \mbP_1(t) J_{1i}$.
\end{theorem}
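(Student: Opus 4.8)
The plan is to exploit the permutation symmetry among the $N$ players together with the uniqueness of the solution of \eqref{DE3_P} guaranteed by Remark \ref{remark:P}. The first thing I would record is how the data transform under a block transposition. For $1\le i<j\le N$ let $\tau=(i\,j)$ and let $J_{ij}$ be the associated block-permutation matrix, so that $J_{ij}^T=J_{ij}^{-1}=J_{ij}$. Since all players share the same $A,B,Q,R,\Gamma,\Gamma_f$ and are coupled only through the symmetric average $X^{(N)}$, direct inspection gives $J_{ij}\widehat{\mbA}J_{ij}=\widehat{\mbA}$, $\ J_{ij}\mbB_k=\mbB_{\tau(k)}$, $\ J_{ij}\mbQ_kJ_{ij}=\mbQ_{\tau(k)}$ and $J_{ij}\mbQ_{kf}J_{ij}=\mbQ_{\tau(k)f}$ for every $k$: the first holds because the diagonal part and the all-ones coupling part of $\widehat{\mbA}$ are permutation invariant, and the last three follow from $\mbK_kJ_{ij}=\mbK_{\tau(k)}$ (right multiplication by $J_{ij}$ moves the identity block of $\mbK_k$ from position $k$ to position $\tau(k)$ while leaving the uniform $-\Gamma/N$ part untouched).

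With these relations I would prove part ii) by uniqueness. Fix $i>1$, put $J:=J_{1i}$, $\tau:=(1\,i)$, and define the candidate family $\widehat{\mbP}_k:=J^T\mbP_{\tau(k)}J$. Conjugating each term of the vector field in \eqref{DE3_P} by $J$ and using the transformation rules above, one checks term by term that $(\widehat{\mbP}_1,\dots,\widehat{\mbP}_N)$ again solves \eqref{DE3_P} with terminal data $\widehat{\mbP}_k(T)=\mbQ_{kf}$: the conjugation passes through $\widehat{\mbA}$ unchanged and merely relabels the index-dependent factors $\mbB_k,\mbQ_k$ by $\tau$, which is exactly compensated by the relabeling built into $\widehat{\mbP}_k$. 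By Remark \ref{remark:P}, $\widehat{\mbP}_k=\mbP_k$, and taking $k=i$ (so $\tau(i)=1$) yields $\mbP_i=J_{1i}^T\mbP_1J_{1i}$, which is part ii).

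For part i) I would first extract the coarse block pattern from the permutations fixing player $1$. Applying the same uniqueness argument to a transposition $\tau=(j\,k)$ with $j,k\ge 2$ gives $J_{jk}^T\mbP_1J_{jk}=\mbP_1$; invariance of $\mbP_1$ under all such block transpositions, together with $\mbP_1=\mbP_1^T$, forces the $(1,1)$ block to be some $\Pi_1$, every $(1,b)$ block with $b\ge 2$ to equal a common $\Pi_2$ (hence every $(a,1)$ block to equal $\Pi_2^T$), all diagonal blocks $(a,a)$ with $a\ge 2$ to coincide, and all off-diagonal blocks $(a,b)$ with $a\ne b$, $a,b\ge 2$, to coincide.

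The remaining point, which I expect to be the \emph{main obstacle}, is that these diagonal and off-diagonal lower-right blocks are in fact equal, so that a single $\Pi_3$ appears in \eqref{P_matrix3}. This does not follow from permutation symmetry, since the symmetric group on $\{2,\dots,N\}$ has two distinct orbits on pairs; it must instead be read off from the Riccati dynamics. To do this I would use part ii) to write the coupling term $\Phi:=\sum_k\mbB_kR^{-1}\mbB_k^T\mbP_k$ block-wise as $\Phi^{(a,b)}=(BR^{-1}B^T)\mbP_a^{(a,b)}$ with $\mbP_a=J_{1a}^T\mbP_1J_{1a}$ (writing $M^{(a,b)}$ for the $(a,b)$ $n\times n$ block of $M$), substitute the coarse structure into the right-hand side $-(\mbP_1\widehat{\mbA}+\widehat{\mbA}^T\mbP_1)+\mbP_1\Phi+\Phi^T\mbP_1-\mbP_1\mbB_1R^{-1}\mbB_1^T\mbP_1-\mbQ_1$, and compare its $(a,a)$ and $(a,b)$ lower-right blocks. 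Letting $\Delta(t)$ be the difference of the diagonal and off-diagonal lower-right blocks of $\mbP_1(t)$, every term contributes identically except for pieces proportional to $\Delta$, so that $\dot\Delta$ is a linear homogeneous expression in $\Delta$ and $\Delta^T$ with time-varying coefficients built from $\Pi_1,\Pi_2,A$ and $BR^{-1}B^T$. Because $\mbQ_{1f}$ already has all its lower-right blocks equal to $N^{-2}\Gamma_f^TQ_f\Gamma_f$, we have $\Delta(T)=0$, and uniqueness of the trivial solution of a homogeneous linear ODE forces $\Delta\equiv 0$ on $[0,T]$. Equivalently the whole argument can be packaged as an invariant-manifold statement: the set of families satisfying both i) and ii) contains the terminal data and is left invariant by the vector field of \eqref{DE3_P}, so uniqueness confines the trajectory to it. The bookkeeping in this last computation is where all the work lies; the preceding steps are soft consequences of symmetry and uniqueness.
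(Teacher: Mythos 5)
Your proposal is correct and follows essentially the same route as the paper's own proof: the paper's Lemma in Appendix A uses exactly your permutation-conjugation plus uniqueness argument (via Remark \ref{remark:P}) to obtain part ii) and the coarse block pattern with possibly distinct diagonal blocks $\Pi_3$ and off-diagonal blocks $\Pi_4$ in the lower-right part, and then, just as you propose, the paper derives the block ODEs and observes that $\Pi_3-\Pi_4$ satisfies a homogeneous linear (Sylvester-type) ODE with coefficients built from $\Pi_1,\Pi_2,A,M$ and zero terminal condition, forcing $\Pi_3\equiv\Pi_4$ on $[0,T]$. You correctly identified that the equality of the two lower-right block types is the step that cannot come from symmetry alone (two orbits of the stabilizer of player $1$) and must be read off the dynamics, which is precisely how the paper closes the argument.
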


\begin{proof} See Appendix A. \end{proof}

By Theorem \ref{theorem:Prep3}, \eqref{main_conl1} is equivalent to the following condition:
\begin{align}\label{main_con2}
\sup_{N\ge N_0, 0\leq t\leq T} \left(|\Pi_1(t)|+N|\Pi_2(t)|+N^2|\Pi_3(t)|\right)<\infty.
\end{align}

We present some continuous dependence result of parametrized  ODEs in
 Theorem \ref{theorem:depen} below.
 This will play a  key role in establishing Theorem \ref{theorem:iff} later.
Consider
\begin{align}
&\dot{x}=f(t,x), \quad x(0)=z\in\mathbb{R}^K, \label{dot_x} \\
&\dot{y}=f(t,y)+g(\epsilon, t,y), \label{dot_y}
\end{align}
where $y(0)=z_{\epsilon}\in\mathbb{R}^K$, $0<\epsilon\leq 1.$

Let $\phi(t,x)=f(t,x),$ or $f(t,x)+g(\epsilon, t, x)$.
We introduce the following assumptions on \eqref{dot_x} and \eqref{dot_y}.

(A1) $\sup_{\epsilon, 0\leq t\leq T} |f(t,0)|+|g(\epsilon, t, 0)|\leq C_1$.

(A2) $\phi(\cdot, x)$ is Lebesgue measurable for each fixed $x\in\mathbb{R}^K$.

(A3) For each $t\in [0, T]$, $\phi(t,x): \mathbb{R}^K\rightarrow\mathbb{R}^K$ is locally Lipschitz in $x$, uniformly with respect to $(t,\epsilon)$, i.e., for any fixed $r>0$, and $x, y\in B_{r}(0)$ which is the open ball of radius $r$ centering $0$,
\begin{align*}
|\phi(t, x)-\phi(t, y)|\leq \mbox{Lip} (r)|x-y|,
\end{align*}
where $\mbox{Lip}(r)$ depends only on $r$, not on $\epsilon\in (0,1],t\in [0,T]$.

(A4)   $\lim_{\epsilon\rightarrow 0}|z_{\epsilon}-z|=0$, and for each fixed $r>0$,
\begin{align*}
\lim_{\epsilon\rightarrow 0}\sup_{0\leq t\leq T, y\in B_r(0)} |g(\epsilon, t, y)|=0.
\end{align*}

 If the solutions to \eqref{dot_x} and \eqref{dot_y}, denoted by $x^z(t)$
 and $y^\epsilon(t)$,  exist on $[0,T]$, they are unique by the local Lipschitz condition (A3); in this case denote
$\delta_\epsilon = \int_0^T |g(\epsilon, \tau, x^z(\tau))|d\tau$, which converges to $0$ as $\epsilon\to 0$ due to (A4).
\begin{theorem} \label{theorem:depen}
Under Assumptions (A1)--(A4), we have the following assertions:

i) If \eqref{dot_x} has a solution $x^z(t)$ on $[0, T]$, then there exists $0<\bar{\epsilon}\leq 1$ such that for all $0<\epsilon\leq\bar{\epsilon}$, \eqref{dot_y} has a solution $y^\epsilon(t)$ on $[0, T]$ and
\begin{align}
\sup_{0\le t\le T}|y^\epsilon (t)-x^z(t)|=O(|z_\epsilon-z|
+\delta_\epsilon). \label{oe}
\end{align}

ii) Suppose there exists a sequence $\{\epsilon_i, i\geq 1\}$ where $ 0<\epsilon_i\leq 1$ and $\lim_{i\to \infty}\epsilon_i= 0$ such that \eqref{dot_y} with $\epsilon=\epsilon_i$ has a solution $y^{\epsilon_i}$   on $[0, T]$   and
$\sup_{i\ge 1, 0\leq t\leq T} |y^{\epsilon_i}(t)|\leq C_2$
for some constant $C_2$. Then \eqref{dot_x} has a solution on $[0, T]$.
\end{theorem}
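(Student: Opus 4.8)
The plan is to base both parts on a single a priori Gronwall estimate that confines the perturbed trajectory to a fixed compact ball on which the uniform Lipschitz bound of (A3) applies, using $x^z$ as the reference curve.

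For part i), I would set $M=\sup_{0\le t\le T}|x^z(t)|<\infty$ (finite since $x^z$ is continuous on $[0,T]$), choose the radius $r=M+1$, and let $L=\mathrm{Lip}(r)$ be the Lipschitz constant from (A3), which bounds the increments of both $f$ and $\phi:=f+g$ on $B_r(0)$ uniformly in $(t,\epsilon)$. Writing $e(t)=y^\epsilon(t)-x^z(t)$ and using the identity $\dot e=\phi(t,y^\epsilon)-\phi(t,x^z)+g(\epsilon,t,x^z)$, the integral form gives, as long as both curves remain in $B_r(0)$,
\begin{equation*}
|e(t)|\le |z_\epsilon-z|+\delta_\epsilon+L\int_0^t|e(\tau)|\,d\tau ,
\end{equation*}
so Gronwall's inequality yields $|e(t)|\le(|z_\epsilon-z|+\delta_\epsilon)e^{LT}$. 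Since $\delta_\epsilon\to0$ and $z_\epsilon\to z$ by (A4), the right-hand side tends to $0$, which already contains the rate asserted in \eqref{oe}.

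The delicate point, and what I expect to be the main obstacle, is that this estimate is valid only while $y^\epsilon$ stays in $B_r(0)$, which is precisely what must be established; I would close this loop by a continuation/bootstrap argument. Let $[0,T_\epsilon)$ be the maximal interval of existence of $y^\epsilon$ and let $\sigma_\epsilon\le T_\epsilon$ be the first time $|y^\epsilon|$ reaches $r$. On $[0,\min(\sigma_\epsilon,T))$ the Gronwall bound holds; choosing $\bar\epsilon$ small enough that $(|z_\epsilon-z|+\delta_\epsilon)e^{LT}<1$ for all $\epsilon\le\bar\epsilon$ forces $|y^\epsilon(t)|\le M+|e(t)|<M+1=r$, so $y^\epsilon$ can never reach the boundary and remains in the compact set $\overline{B_r(0)}$. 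By the standard continuation theorem a solution confined to a compact set extends, hence $T_\epsilon>T$ and $y^\epsilon$ exists on $[0,T]$ with the estimate \eqref{oe}.

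For part ii), I would argue by compactness. The uniform bound $\sup_{i,t}|y^{\epsilon_i}(t)|\le C_2$, together with (A1) and (A3), makes $|f(t,x)|$ and $|g(\epsilon,t,x)|$ uniformly bounded on $\overline{B_{C_2}(0)}\times[0,T]$ (using $|f(t,x)|\le C_1+\mathrm{Lip}(C_2)|x|$, with the analogous bound for $g$ obtained by applying (A3) to both $\phi$ and $f$). Hence $|\dot y^{\epsilon_i}|$ is uniformly bounded, which gives equicontinuity, and Arzel\`a--Ascoli provides a subsequence $y^{\epsilon_{i_k}}\to x^*$ uniformly on $[0,T]$. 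Passing to the limit in the integral equation, $z_{\epsilon_{i_k}}\to z$ and $\int_0^t g(\epsilon_{i_k},\tau,y^{\epsilon_{i_k}})\,d\tau\to0$ by (A4), while $\int_0^t f(\tau,y^{\epsilon_{i_k}})\,d\tau\to\int_0^t f(\tau,x^*)\,d\tau$ by the Lipschitz continuity of $f$ and dominated convergence; therefore $x^*$ solves \eqref{dot_x} on $[0,T]$. By the uniqueness furnished by (A3), $x^*$ coincides with $x^z$, so \eqref{dot_x} has a solution on $[0,T]$.
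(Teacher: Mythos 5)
Your proposal is correct and follows essentially the same route as the paper's proof: part i) uses the same error decomposition $\phi(t,y^\epsilon)-\phi(t,x^z)+g(\epsilon,t,x^z)$, a Gronwall bound, and a boundary-hitting/continuation argument confining $y^\epsilon$ to a fixed ball (the paper uses radius $2C_z$ with margin $C_z/3$ where you use $M+1$ with margin $1$), and part ii) uses the same Arzel\`a--Ascoli compactness argument with passage to the limit in the integral equation. Your part ii) is in fact slightly more explicit than the paper's about why the $g$-term vanishes in the limit; the concluding appeal to uniqueness is superfluous but harmless.
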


\begin{proof}
See Appendix B. \end{proof}

\begin{remark} \label{remark:tc}
If \eqref{dot_x} and \eqref{dot_y} are replaced by matrix ODEs and (or) a terminal condition at $T$ is used in each equation, the results in Theorem \ref{theorem:depen} still hold.
\end{remark}

 Let $$M=BR^{-1}B^T.$$ Before presenting further results, we introduce two Riccati ODEs:
\begin{align}\label{d11}
\begin{cases}
\dot{\Lambda}_1 =  \Lambda_1M \Lambda_1-(\Lambda_1A+A^T\Lambda_1)-Q, \\
\Lambda_1(T)=Q_f,
\end{cases}
\end{align}
and
\begin{align}\label{d21}
\begin{cases}
\dot{\Lambda}_2 = \Lambda_1M \Lambda_2+ \Lambda_2 M \Lambda_1+ \Lambda_2M\Lambda_2 \\
                             \quad \qquad - (\Lambda_1G + \Lambda_2 (A+G) +A^T\Lambda_2) +Q\Gamma, \\
 \Lambda_2(T)=-Q_f\Gamma_f. %\nonumber
 \end{cases}
\end{align}
Note that \eqref{d11} is the standard Riccati ODE in LQ optimal control and has a unique solution $\Lambda_1$ on $[0,T]$. Equation
\eqref{d21} is a non-symmetric Riccati ODE, where $\Lambda_1$ is now treated as a known function. We state the main theorem on asymptotic solvability.

\begin{theorem}\label{theorem:iff}
The sequence of games  in  \eqref{stateXi}--\eqref{costJi} has asymptotic solvability if and only if   \eqref{d21} has a unique solution on $[0, T]$.
\end{theorem}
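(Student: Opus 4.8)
The plan is to connect the high-dimensional Riccati system \eqref{DE3_P} to the low-dimensional pair \eqref{d11}--\eqref{d21} through the block structure from Theorem \ref{theorem:Prep3} together with the re-scaling \eqref{main_con2}, and then invoke the continuous-dependence machinery of Theorem \ref{theorem:depen} (in its matrix/terminal-condition form, Remark \ref{remark:tc}) to pass between finite $N$ and the limit. By Theorem \ref{theorem:Prep3}, solving \eqref{DE3_P} reduces to solving the single ODE for $\mbP_1(t)$, which is entirely encoded in the three blocks $\Pi_1,\Pi_2,\Pi_3$. The first step is therefore to substitute the ansatz \eqref{P_matrix3} into \eqref{DE3_P} and extract the coupled ODE system satisfied by $(\Pi_1,\Pi_2,\Pi_3)$, keeping careful track of the factors of $1/N$ coming from $\widehat{\mbA}$ and the $\mbK_i$, $\mbQ_i$ definitions.

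Motivated by the boundedness form \eqref{main_con2}, the second step is to introduce the re-scaled variables — roughly $\Lambda_1 = \Pi_1$, together with $N\Pi_2$ and $N^2\Pi_3$ (and a combination such as $\Pi_1 + N\Pi_2$ that should converge to $\Lambda_1+\Lambda_2$) — so that the rescaled system has well-defined limiting dynamics as $N\to\infty$ with the leading equation becoming exactly \eqref{d11} and the next order equation becoming exactly \eqref{d21}. The rescaled $\Pi_2,\Pi_3$ equations will carry residual terms of order $1/N$; these are precisely the perturbation $g(\epsilon,t,y)$ in \eqref{dot_y} with $\epsilon = 1/N$, and they vanish uniformly on bounded sets, verifying (A4), while (A1)--(A3) follow from the polynomial (quadratic) structure of the Riccati vector field.

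With this setup the equivalence splits into the two directions of Theorem \ref{theorem:depen}. For sufficiency, if \eqref{d21} has a solution on $[0,T]$ (and \eqref{d11} always does), then the limiting unperturbed system \eqref{dot_x} has a global solution, so part i) guarantees that for all large $N$ the rescaled finite-$N$ system solves on $[0,T]$ with solutions converging to the limit; converting back through the re-scaling yields $\Pi_1 = O(1)$, $\Pi_2 = O(1/N)$, $\Pi_3 = O(1/N^2)$ uniformly, which is exactly \eqref{main_con2}, hence asymptotic solvability. For necessity, asymptotic solvability gives, via \eqref{main_con2}, a uniformly bounded sequence of rescaled finite-$N$ solutions along $\epsilon_i = 1/N_i$, so part ii) forces the limiting system — and in particular \eqref{d21} — to have a solution on $[0,T]$; uniqueness of that solution is automatic from the local Lipschitz property noted for \eqref{DE3_P} in Remark \ref{remark:P}.

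The main obstacle I anticipate is the bookkeeping in the second step: choosing the right rescaled coordinates so that the limiting equations decouple cleanly into \eqref{d11} and \eqref{d21} rather than a tangled system, and verifying that every cross term in the $\Pi_2,\Pi_3$ dynamics either survives into the limit or is genuinely $O(1/N)$ uniformly in $t$ and on bounded state sets. A subtle point is the direction of time: since \eqref{DE3_P} carries a terminal condition at $T$, one must apply Theorem \ref{theorem:depen} in the time-reversed form sanctioned by Remark \ref{remark:tc}, and confirm that the matrix-valued setting does not disturb the Lipschitz estimates. Once the perturbation bound is established, both implications are a direct reading of Theorem \ref{theorem:depen}.
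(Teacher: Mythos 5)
Your proposal is correct and follows essentially the same route as the paper's own proof: reduce \eqref{DE3_P} to the $(\Pi_1,\Pi_2,\Pi_3)$ system via Theorem \ref{theorem:Prep3}, re-scale to $(\Pi_1, N\Pi_2, N^2\Pi_3)$ so that the residual terms become an $O(1/N)$ perturbation, and then obtain sufficiency from Theorem \ref{theorem:depen} i) and necessity from Theorem \ref{theorem:depen} ii) (with Remark \ref{remark:tc} handling the matrix/terminal-condition form). The only detail worth making explicit, which the paper states, is that in the sufficiency direction the third limiting equation \eqref{d3_1} is linear once $\Lambda_1,\Lambda_2$ are known, so the full limiting system indeed has a global solution on $[0,T]$ before part i) is invoked.
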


\begin{proof}
See Appendix  C.
\end{proof}

We outline the key idea for identifying this necessary and sufficient condition of asymptotic solvability. By Theorem \ref{theorem:Prep3} and the ODE of $\mbP_1(t)$ in \eqref{DE3_P}, we obtain an ODE system of the form
$$
\begin{bmatrix}
{\dot\Pi}_1\\
{\dot\Pi}_2\\
{\dot\Pi}_3
\end{bmatrix}
 = \Psi_N(\Pi_1, \Pi_2, \Pi_3).
$$
However, directly taking $N\to \infty$ is not useful because  this method
on one hand will not
 generate a meaningful  limit of the vector field $\Psi_N$ owing to terms such as $(N-1) \Pi_2 M\Pi_2$ in $\Psi_N$ (see \eqref{d1} ) and on the other will  cause a loss of
dynamical information since $(\Pi_2, \Pi_3)$ can vanish when $N\to\infty$.
Our method is to re-scale by
defining
\begin{align} \label{new_system_1}
&\Lambda_1^N=\Pi_1(t), \ \Lambda_2^N=N\Pi_2(t),
\ \Lambda_3^N=N^2\Pi_3(t),  %\nonumber
\end{align}
and examine their ODE system.
This procedure leads to a new limiting
ODE system which can preserve key information about the dynamics of  $ (\Pi_1, \Pi_2, \Pi_3)$
and which consists of  \eqref{d11} and \eqref{d21} together with another equation:
\begin{align} \label{d3_1}
\begin{cases}
\dot{\Lambda}_3 = \Lambda_2^TM \Lambda_2 + \Lambda_3M \Lambda_1
+ \Lambda_1M \Lambda_3+\Lambda_3M \Lambda_2+\Lambda_2^TM\Lambda_3 \\
                             \qquad -
                             \left(\Lambda_2^T G+G^T\Lambda_2
                             +\Lambda_3( A+G)
                             +(A^T+G^T)\Lambda_3 \right)  \\
 \qquad - \Gamma^T Q\Gamma,  \\
  \Lambda_3(T)=\Gamma_f^T Q_f\Gamma_f.
\end{cases}
\end{align}
 Note that after \eqref{d11} and \eqref{d21} are solved on $[0,T]$ (or
 otherwise on a maximal existence interval for the latter), \eqref{d3_1}  becomes  a linear ODE.

\begin{theorem}\label{theorem:PiLa}
Suppose \eqref{d21} has  a solution on $[0,T]$. Then we have
\begin{align}
\sup_{0\le t\le T}(|\Pi_1-\Lambda_1|  +| N\Pi_2 -
\Lambda_2|+|N^2\Pi_3-\Lambda_3|)=O(1/N).\nonumber
\end{align}
\end{theorem}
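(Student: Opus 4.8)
The plan is to realize the claim as a quantitative continuous-dependence estimate obtained from Theorem \ref{theorem:depen} with perturbation parameter $\epsilon=1/N$. First I would use Theorem \ref{theorem:Prep3} together with the ODE for $\mbP_1$ in \eqref{DE3_P} to derive a closed ODE system $\dot\Pi=\Psi_N(\Pi_1,\Pi_2,\Pi_3)$ for the three distinct blocks $\Pi_1,\Pi_2,\Pi_3$. Substituting the permutation identity $\mbP_i=J_{1i}^T\mbP_1 J_{1i}$ and the block forms $\sum_k \mbB_k R^{-1}\mbB_k^T=I_N\otimes M$ and $\widehat{\mbA}=I_N\otimes A+\tfrac{1}{N}({\bf 1}_{N\times N}\otimes G)$ reduces every matrix product in \eqref{DE3_P} to expressions in $\Pi_1,\Pi_2,\Pi_3$ carrying explicit $N$-dependent coefficients, including terms such as $(N-1)\Pi_2 M\Pi_2$.

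Next, applying the re-scaling $\Lambda_1^N=\Pi_1$, $\Lambda_2^N=N\Pi_2$, $\Lambda_3^N=N^2\Pi_3$ of \eqref{new_system_1}, I would rewrite the system in the form $\dot\Lambda^N=f(t,\Lambda^N)+g(1/N,t,\Lambda^N)$, where $f$ is the vector field of the limiting system \eqref{d11}, \eqref{d21}, \eqref{d3_1} and $g$ collects all remainder terms. The essential bookkeeping is that, after re-scaling, each $N$-dependent coefficient either tends to its limiting value with an $O(1/N)$ error or multiplies a term absent from the limit, so that $g$ is polynomial in $\Lambda^N$ with coefficients that are $O(1/N)$ uniformly on bounded sets. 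I would handle the terminal data similarly, computing the blocks of $\mbQ_{1f}=\mbK_{1f}^T Q_f\mbK_{1f}$ to find $\Lambda_1^N(T)=Q_f+O(1/N)$, $\Lambda_2^N(T)=-Q_f\Gamma_f+O(1/N)$, and $\Lambda_3^N(T)=\Gamma_f^T Q_f\Gamma_f$ exactly, so the terminal conditions of \eqref{d11}, \eqref{d21}, \eqref{d3_1} are matched up to $O(1/N)$.

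With this perturbation form in hand, I would verify (A1)--(A4) for the pair $(f,f+g)$ on $[0,T]$: (A1) and (A2) are immediate from continuity; (A3) holds because $f$ is quadratic hence locally Lipschitz, while the $g$-coefficients are bounded uniformly in $N$; and (A4) follows from the $O(1/N)$ size of both $g$ and the terminal-data mismatch. Since \eqref{d21} is assumed to have a solution on $[0,T]$, while \eqref{d11} always does and \eqref{d3_1} is then linear, the limiting solution $\Lambda=(\Lambda_1,\Lambda_2,\Lambda_3)$ exists on all of $[0,T]$ and plays the role of $x^z$. Theorem \ref{theorem:depen}i), adapted to matrix ODEs with terminal conditions via Remark \ref{remark:tc}, then yields existence of $\Lambda^N$ (equivalently of $\mbP_1$) on $[0,T]$ for all large $N$, together with $\sup_t|\Lambda^N-\Lambda|=O(|z_{1/N}-z|+\delta_{1/N})$. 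Finally I would estimate $\delta_{1/N}=\int_0^T|g(1/N,\tau,\Lambda(\tau))|\,d\tau=O(1/N)$ using the boundedness of $\Lambda$ on the compact interval, and combine with the $O(1/N)$ terminal-data mismatch to obtain the stated rate, noting $|\Lambda^N-\Lambda|$ is exactly $|\Pi_1-\Lambda_1|+|N\Pi_2-\Lambda_2|+|N^2\Pi_3-\Lambda_3|$ up to norm equivalence.

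The main obstacle is the block-level algebra of the second step: performing the reduction so that the re-scaled finite-$N$ system separates cleanly into the limiting field $f$ plus a genuinely $O(1/N)$ remainder $g$. The delicacy is that several terms of $\Psi_N$ carry individually large factors such as $N-1$ or $N-2$, and one must track how the weights $\Lambda_2^N=N\Pi_2$ and $\Lambda_3^N=N^2\Pi_3$ convert these into bounded leading coefficients plus vanishing corrections, while confirming that \emph{no} term of order $N$ survives in $g$; this is precisely where the right choice of re-scaling exponents is tested. Once the separation is established, the remainder is a routine invocation of the continuous-dependence estimate.
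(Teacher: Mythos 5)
Your proposal is correct and follows essentially the same route as the paper: the paper's proof of this theorem is a direct appeal to Theorem \ref{theorem:depen} i) (with Remark \ref{remark:tc}) applied to the re-scaled system \eqref{Nd1_1}--\eqref{Nd3_1} of Appendix C, whose perturbations $g_1,g_2,g_3$ and terminal-data mismatches are exactly the $O(1/N)$ remainder and $O(|z_\epsilon-z|)$ terms you identify, so that \eqref{oe} with $\epsilon=1/N$ yields the stated rate. Your bookkeeping of the terminal conditions ($Q_f+O(1/N)$, $-Q_f\Gamma_f+O(1/N)$, $\Gamma_f^TQ_f\Gamma_f$ exact) and of the uniform-on-bounded-sets $O(1/N)$ size of $g$ matches the paper's computation.
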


\begin{proof}  The bound follows from Theorem \ref{theorem:depen} i) by use of
 $g_1, g_2, g_3$ and  the terminal
 conditions which appear in the equations of  $\Lambda_1^N$, $\Lambda_2^N$, $\Lambda_3^N$ in Appendix C.  \end{proof}

\subsection{Decentralized Control}
\label{sec:dec}

\begin{proposition} \label{prop:sr}
Assume that \eqref{DE3_P} has  a solution
 $(\mbP_1, \cdots,
\mbP_N)$  on $[0,T]$.
Then the assertions hold:

i)  $\mbS_i(t)$ in \eqref{DE3_S} has the  form
\begin{align}\label{S_form}
\mbS_i(t)=[ \theta^T_2(t), \cdots,\theta_1^T(t),\cdots,
\theta_2^T(t)
]^T,
\end{align}
in which the $i$th sub-vector is $\theta_1(t)\in \mathbb{R}^n$ and the remaining sub-vectors are $\theta_2(t)\in \mathbb{R}^n$.

ii) Furthermore, $\mbr_1=\mbr_2=\cdots=\mbr_N$ for $t\in[0,T]$.
\end{proposition}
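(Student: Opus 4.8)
The plan is to exploit the permutation symmetry of the player labels, exactly as in the proof of Theorem \ref{theorem:Prep3}, and to push it from the Riccati system \eqref{DE3_P} to the linear systems \eqref{DE3_S} and \eqref{DE3_gamma}. For a transposition $\sigma=(j\,k)$ I write $J=J_{jk}$ for the corresponding block-swap matrix and record the elementary identities $J\widehat{\mbA}J=\widehat{\mbA}$, $J\mbB_m=\mbB_{\sigma(m)}$, $\mbK_mJ=\mbK_{\sigma(m)}$ and $\mbK_{mf}J=\mbK_{\sigma(m)f}$ (right multiplication by $J$ permutes block columns, moving the $I_n$ block in position $m$ to position $\sigma(m)$ and leaving the $-\frac1N[\Gamma,\dots,\Gamma]$ part untouched), which give $J\mbQ_{\sigma(m)}J=\mbQ_m$. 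I would first establish the equivariance of the Riccati solution, namely $\mbP_{\sigma(m)}=J\mbP_mJ$ for every transposition (the case $\sigma=(1\,i)$ is Theorem \ref{theorem:Prep3} ii): each coupling sum $\sum_k\mbB_kR^{-1}\mbB_k^T\mbP_k$ is invariant under the reindexing $k\mapsto\sigma(k)$ combined with conjugation by $J$, and the remaining linear and quadratic terms transform in the obvious way, so $(J\mbP_{\sigma(m)}J)_m$ again solves \eqref{DE3_P} with the same terminal data; uniqueness (Remark \ref{remark:P}) forces $J\mbP_{\sigma(m)}J=\mbP_m$.

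For part i) I run the same symmetry argument on \eqref{DE3_S}. Using the $\mbP$-equivariance just obtained, a term-by-term check shows that $\mbS'_m:=J\mbS_{\sigma(m)}$ solves \eqref{DE3_S} with the same terminal condition $-\mbK_{mf}^TQ_f\eta_f$; since \eqref{DE3_S} is linear with a unique solution once $\mbP$ is fixed (Remark \ref{remark:PSr}), this yields $\mbS_m=J\mbS_{\sigma(m)}$. Taking $m\notin\{j,k\}$ gives $\sigma(m)=m$, hence $\mbS_m=J_{jk}\mbS_m$, i.e. the $j$th and $k$th blocks of $\mbS_m$ coincide; letting $j,k$ range over all indices different from $m$ shows that every block of $\mbS_m$ other than the $m$th equals a common vector $\theta_2$, while the $m$th is some $\theta_1$, which is the form \eqref{S_form}. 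Finally, taking $m=j$ (so $\sigma(m)=k$) gives $\mbS_k=J_{jk}\mbS_j$, and comparing the two sides shows that the pair $(\theta_1,\theta_2)$ is independent of the player index.

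Part ii) is then a direct computation. The terminal values $\mbr_i(T)=\eta_f^TQ_f\eta_f$ coincide, so it suffices to show the right-hand side of \eqref{DE3_gamma} is independent of $i$. The term $\eta^TQ\eta$ is constant; for $\mathrm{Tr}(\widehat{\mbD}^T\mbP_i\widehat{\mbD})$ I use the cyclic property of the trace together with $\mbP_i=J_{1i}^T\mbP_1J_{1i}$ and $J_{1i}\widehat{\mbD}\widehat{\mbD}^TJ_{1i}=\widehat{\mbD}\widehat{\mbD}^T$ (conjugation permutes the identical diagonal blocks $DD^T$), reducing it to $\mathrm{Tr}(\widehat{\mbD}^T\mbP_1\widehat{\mbD})$. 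For the quadratic terms I invoke part i): since the $i$th block of $\mbS_i$ is $\theta_1$ one has $\mbB_i^T\mbS_i=B^T\theta_1$, and $\sum_k\mbB_kR^{-1}\mbB_k^T\mbS_k$ is the block vector with every block equal to $M\theta_1$; hence $-\mbS_i^T\mbB_iR^{-1}\mbB_i^T\mbS_i=-\theta_1^TM\theta_1$ and $2\mbS_i^T\sum_k\mbB_kR^{-1}\mbB_k^T\mbS_k=2(\theta_1+(N-1)\theta_2)^TM\theta_1$, neither depending on $i$. Thus $\dot{\mbr}_i$ is common to all $i$, and integrating backward from $T$ gives $\mbr_1=\cdots=\mbr_N$.

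The main obstacle is the bookkeeping in the equivariance steps: correctly tracking how each coupling term behaves under the simultaneous reindexing $k\mapsto\sigma(k)$ and conjugation by the involution $J$, where sign and index slips are easy. Once equivariance together with uniqueness is in hand, the structural conclusions of i) and ii) follow with little further effort.
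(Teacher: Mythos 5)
Your proposal is correct and follows essentially the same route as the paper's own proof: the paper likewise applies the permutation-symmetry-plus-uniqueness argument of Lemma \ref{lem:Prep4} to \eqref{DE3_S} (checking that the $J_{23}$- and $J_{12}$-permuted tuples again solve \eqref{DE3_S}) to get the block structure of $\mbS_i$, and then writes out the resulting $i$-independent ODE $\dot{\mbr}_i=\theta_1^TM\theta_1+2(N-1)\theta_2^TM\theta_1-\mbox{Tr}(D^T\Pi_1D)-(N-1)\mbox{Tr}(D^T\Pi_3D)-\eta^TQ\eta$ with common terminal data to conclude $\mbr_1=\cdots=\mbr_N$. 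Your treatment is merely more explicit (general transpositions, the cyclic-trace reduction of $\mbox{Tr}(\widehat{\mbD}^T\mbP_i\widehat{\mbD})$), but the ideas coincide.
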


\begin{proof} See Appendix C. \end{proof}

We introduce  two  ODEs:
\begin{align}\label{theta_1_new}
\begin{cases}
\dot{\chi}_1(t) =  (\Lambda_1M+\Lambda_2M-A^T){\chi_1} + Q\eta,
\\
 {\chi_1}(T)=-Q_f\eta_f,
\end{cases}
\end{align}
and
\begin{align}\label{theta_2_new}
\begin{cases}
\dot{\chi}_2(t) = ((\Lambda_2^T+\Lambda_3)M-G^T){\chi_1}  \\
\qquad\quad+((\Lambda_1+\Lambda_2^T)M
                        -(A^T+G^T)){\chi_2}-\Gamma^TQ\eta,
                         \\
     {\chi_2}(T)=\Gamma_f^T Q_f\eta_f.
\end{cases}
\end{align}

 Define
\begin{align}
{\chi}_1^N(t)=\theta_1(t),\quad {\chi}_2^N(t)=N\theta_2(t). \label{ch12}
\end{align}
In fact \eqref{theta_1_new} and \eqref{theta_2_new} can be derived as the limit of the ODEs satisfied by $(\chi_1^N, \chi_2^N)$; see Appendix  C.

\begin{proposition} \label{prop:sichi}
For $(\theta_1(t), \theta_2(t))$ specified in \eqref{S_form}, we have
\begin{align}
\sup_{0\le t\le T}(|\theta_1(t)- \chi_1(t)|
+|N\theta_2(t)-\chi_2(t)|) =O(1/N). \label{sichi}
\end{align}
\end{proposition}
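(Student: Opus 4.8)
The plan is to mirror the proof of Theorem \ref{theorem:PiLa}: reduce the high-dimensional backward ODE \eqref{DE3_S} for $\mbS_1$ to a low-dimensional coupled system for the pair $(\theta_1,\theta_2)$, re-scale via \eqref{ch12}, and then invoke the continuous-dependence result. First I would substitute the block form of $\mbP_1$ from \eqref{P_matrix3} (Theorem \ref{theorem:Prep3}) and the symmetric pattern of the $\mbS_k$ from \eqref{S_form} (Proposition \ref{prop:sr}) into the $\mbS_1$-equation. Using $\mbB_k R^{-1}\mbB_k^T = (e_k^N (e_k^N)^T)\otimes M$, the global coupling term $\sum_{k=1}^N \mbB_k R^{-1}\mbB_k^T \mbS_k$ collapses to ${\bf 1}_{N\times 1}\otimes(M\theta_1)$, since the $k$th block of $\mbS_k$ is $\theta_1$; the remaining matrix--vector products act blockwise on the $(\theta_1,\theta_2)$-pattern. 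Equating the first block with a generic later block then yields a closed ODE system for $(\theta_1,\theta_2)$ whose coefficients are built from $\Pi_1,\Pi_2,\Pi_3$ and carry explicit powers of $N$ (e.g.\ factors $N-1$ multiplying $\Pi_2$ or $\Pi_3$).

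Next I would apply \eqref{ch12}, $\chi_1^N=\theta_1$, $\chi_2^N=N\theta_2$, and recast the system in the parametrized form $\dot y=f(t,y)+g(\epsilon,t,y)$ of Theorem \ref{theorem:depen} with $y=(\chi_1^N,\chi_2^N)$ and $\epsilon=1/N$. The limit field $f$ is precisely the right-hand side of \eqref{theta_1_new}--\eqref{theta_2_new}, obtained after replacing $\Pi_1,N\Pi_2,N^2\Pi_3$ by $\Lambda_1,\Lambda_2,\Lambda_3$ (recall $\Lambda_1^N,\Lambda_2^N,\Lambda_3^N$ from \eqref{new_system_1}) and dropping the terms that carry a surplus factor $1/N$. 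The perturbation $g$ collects (i) the differences $\Pi_1-\Lambda_1$, $N\Pi_2-\Lambda_2$, $N^2\Pi_3-\Lambda_3$, which are $O(1/N)$ uniformly on $[0,T]$ by Theorem \ref{theorem:PiLa}, and (ii) the explicitly $O(1/N)$ residuals such as $(N-1)\Pi_3=\tfrac{N-1}{N^2}\Lambda_3^N$ and $\Pi_2^T=\tfrac1N(\Lambda_2^N)^T$. Since \eqref{d21} is assumed solvable on $[0,T]$, the functions $\Lambda_3$ and then $\chi_1,\chi_2$ follow from linear ODEs and are bounded on $[0,T]$; hence $f$ is \emph{affine} in $y$ with bounded measurable coefficients and is globally Lipschitz, so (A1)--(A3) are immediate, while $g$ is affine in $y$ with $O(1/N)$ coefficients, giving (A4) with $g=O(1/N)$ on bounded sets.

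It remains to match boundary data. From $\mbS_i(T)=-\mbK_{if}^TQ_f\eta_f$ and the definition of $\mbK_{if}$, one reads off $\theta_1(T)=-Q_f\eta_f+\tfrac1N\Gamma_f^TQ_f\eta_f$ and $\theta_2(T)=\tfrac1N\Gamma_f^TQ_f\eta_f$, so that $\chi_1^N(T)=-Q_f\eta_f+O(1/N)$ and $\chi_2^N(T)=\Gamma_f^TQ_f\eta_f$ agree with the terminal conditions in \eqref{theta_1_new}--\eqref{theta_2_new} up to $O(1/N)$; thus $|z_\epsilon-z|=O(1/N)$. Applying Theorem \ref{theorem:depen} i) in its matrix/terminal-condition form (Remark \ref{remark:tc}) gives $\sup_{0\le t\le T}(|\chi_1^N-\chi_1|+|\chi_2^N-\chi_2|)=O(|z_\epsilon-z|+\delta_\epsilon)=O(1/N)$, which is \eqref{sichi} after unfolding \eqref{ch12}. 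The main obstacle I expect is the bookkeeping in the first step: tracking the powers of $N$ so that each product of a re-scaled coefficient with $\theta_1$ or $N\theta_2$ either converges to a finite limit entering $f$ or is provably $O(1/N)$ and absorbed into $g$. Relative to Theorem \ref{theorem:PiLa} this is cleaner, because \eqref{DE3_S} is linear in $\mbS_i$, so there are no quadratic cross terms to complicate the separation of $f$ from $g$.
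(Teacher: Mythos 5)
Your proposal is correct and follows essentially the same route as the paper: derive the coupled $(\theta_1,\theta_2)$ ODEs from \eqref{DE3_S} via the block structure of Proposition \ref{prop:sr}, re-scale by \eqref{ch12}, and invoke Theorem \ref{theorem:depen} (with Remark \ref{remark:tc}) to get the $O(1/N)$ bound. The only organizational difference is that the paper applies Theorem \ref{theorem:depen} once to the joint system $(\Lambda_1^N,\Lambda_2^N,\Lambda_3^N,\chi_1^N,\chi_2^N)$, whereas you treat $(\chi_1^N,\chi_2^N)$ alone and feed the coefficient convergence through Theorem \ref{theorem:PiLa}; both are valid and rest on the same machinery.
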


\begin{proof} See Appendix C. \end{proof}

By Theorem \ref{theorem:Nash}, the strategy of player ${\cal A}_i$ is
\begin{align}\label{uxN}
u_i= -R^{-1} B^T\Big(\Pi_1(t)X_i+\Pi_2(t)\sum_{j\ne i} X_j +\theta_1(t)\Big).
\end{align}
The closed-loop equation of $X_i$ is  now given by
\begin{align}
&dX_i(t)= \Big(AX_i-M \Big(\Pi_1X_i+\Pi_2\sum_{j\ne i}
X_j +\theta_1\Big) \nonumber \\
  & \qquad\qquad
 +G X^{(N)}\Big)dt + D dW_i, \nonumber
\end{align}
which gives
\begin{align}
dX^{(N)} =&[ (A-M(\Pi_1 +(N-1)\Pi_2)+G)X^{(N)} -M \theta_1] dt
\nonumber \\
&+ \frac{D}{N}\sum_{i=1}^N dW_i. \label{xndw}
\end{align}
To denote the limit of \eqref{xndw} when $N\to \infty$,  we introduce
the closed-loop mean field dynamics
\begin{align}\label{clxbar}
\frac{d\bar X}{dt}= \left(A-M (\Lambda_1 +\Lambda_2)+G\right)\bar X -M \chi_1(t),
\end{align}
where $\Bar X(0)= x_0$.
 \begin{proposition}\label{prop:err}
Suppose  $E\sup_{i\ge 1}|X_i(0)|^2\le C$ for some fixed constant $C$ and
$\lim_{N\to \infty} \frac{1}{N} \sum_{i=1}^N EX_i(0)= x_0$.
Then
$$\sup_{0\le t\le T} E| X^{(N)}(t)- \bar X(t)|^2 =
O(|\frac{1}{N} \sum_{i=1}^N EX_i(0)-x_0|^2+1/N ). $$
\end{proposition}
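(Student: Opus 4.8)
The plan is to treat $X^{(N)}$ and $\bar X$ as solutions of two linear differential systems whose drift coefficients are close, and to control the error $\Delta(t):=X^{(N)}(t)-\bar X(t)$ through a Gronwall estimate for $E|\Delta(t)|^2$. Since $\bar X$ in \eqref{clxbar} is deterministic while $X^{(N)}$ in \eqref{xndw} carries the averaged Brownian noise $\frac{D}{N}\sum_{i=1}^N dW_i$, the resulting bound will split into a deterministic discrepancy between the two drifts, an initial-condition term, and a fluctuation of the averaged noise.

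First I would record the closeness of the drifts. Set $A_N(t):=A-M(\Pi_1+(N-1)\Pi_2)+G$ and $\bar A(t):=A-M(\Lambda_1+\Lambda_2)+G$. By Theorem \ref{theorem:PiLa}, $\Pi_1=\Lambda_1+O(1/N)$ and $N\Pi_2=\Lambda_2+O(1/N)$; as $\Pi_2=O(1/N)$ this yields $(N-1)\Pi_2=\Lambda_2+O(1/N)$, so $\sup_{0\le t\le T}|A_N-\bar A|=O(1/N)$. Proposition \ref{prop:sichi} gives $\sup_t|\theta_1-\chi_1|=O(1/N)$. The limiting functions $\Lambda_1,\Lambda_2,\chi_1$ are continuous on $[0,T]$, hence bounded, so $\bar X$ is bounded on $[0,T]$, and by \eqref{main_con2} the matrix $A_N(t)$ is bounded uniformly in $N$ and $t$.

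Next I would estimate the initial error and close the argument. Writing $m_N=\frac1N\sum_{i=1}^N EX_i(0)$ and using the independence of the $X_i(0)$,
\begin{align*}
E|X^{(N)}(0)-x_0|^2 = \frac1{N^2}\sum_{i=1}^N \mathrm{Var}\big(X_i(0)\big)+|m_N-x_0|^2,
\end{align*}
which is $O(|m_N-x_0|^2+1/N)$ because $E|X_i(0)|^2\le C$. Subtracting \eqref{clxbar} from \eqref{xndw} and writing $A_NX^{(N)}-\bar A\bar X=A_N\Delta+(A_N-\bar A)\bar X$, I obtain $d\Delta=(A_N\Delta+b_N)\,dt+\frac{D}{N}\sum_{i=1}^N dW_i$, where the deterministic vector $b_N(t):=(A_N-\bar A)\bar X-M(\theta_1-\chi_1)$ satisfies $\sup_t|b_N|=O(1/N)$. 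Applying It\^o's formula to $|\Delta|^2$ and taking expectations (the stochastic integral being a martingale after a routine a priori bound on $\sup_t E|X^{(N)}(t)|^2$), the quadratic variation contributes $\frac1N\mathrm{Tr}(DD^T)$ since the $W_i$ are independent, giving
\begin{align*}
\frac{d}{dt}E|\Delta|^2 \le \big(2\sup_{N,t}\|A_N\|+1\big)E|\Delta|^2+|b_N|^2+\tfrac1N\mathrm{Tr}(DD^T).
\end{align*}
Gronwall's inequality on $[0,T]$ then yields $\sup_{0\le t\le T}E|\Delta(t)|^2=O(E|\Delta(0)|^2+1/N)=O(|m_N-x_0|^2+1/N)$, as claimed.

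The main obstacle is not the Gronwall machinery but making precise the two facts that produce the $1/N$ rate: that the mean-field drift $\Lambda_1+\Lambda_2$ is genuinely the limit of the finite-population interaction term $\Pi_1+(N-1)\Pi_2$ — this is exactly where the re-scaling $\Lambda_2^N=N\Pi_2$ and Theorem \ref{theorem:PiLa} are essential, since the raw coefficient $\Pi_2$ alone vanishes — and the clean decomposition of the initial error into the bias $|m_N-x_0|^2$ and an order-$1/N$ fluctuation coming from the independence of the initial states, which is matched by the $\frac1N\mathrm{Tr}(DD^T)$ contribution of the averaged driving noises.
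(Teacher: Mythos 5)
Your proof is correct and follows essentially the same route as the paper's: subtract the dynamics \eqref{xndw} and \eqref{clxbar}, invoke Theorem \ref{theorem:PiLa} and Proposition \ref{prop:sichi} for the $O(1/N)$ closeness of the drift coefficients, and exploit independence of the initial states and of the Brownian motions to get the $O(1/N)$ variance and noise contributions. The only cosmetic difference is that the paper closes the estimate via the explicit variation-of-constants expression for $X^{(N)}-\bar X$, whereas you use It\^o's formula on $|\Delta|^2$ plus Gronwall's inequality; the two are interchangeable elementary arguments here.
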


\begin{proof} By \eqref{xndw}--\eqref{clxbar}, we find the explicit expression of $X^{(N)}(t)-\bar X(t)$. The proposition follows from elementary estimates by use of Theorem \ref{theorem:PiLa} and   Proposition~\ref{prop:sichi}.~\end{proof}

When $N\to \infty$, from \eqref{uxN} we obtain the  control law
\begin{align}
u_i^d= -R^{-1} B^T \left(\Lambda _1 X_i+ \Lambda_2  \bar X
+\chi_1(t)\right), \label{ud1}
\end{align}
which is decentralized since $\bar X $ and $\chi_1$ do not depend on the sample path information of other players and can be computed off-line.
Suppose $\Lambda_1$ and $\Lambda_2$ have been given on $[0,T]$.
Then    \eqref{ud1} can be determined by solving
the decoupled ODE system \eqref{theta_1_new} and \eqref{clxbar},
which has a unique solution. Note that \eqref{theta_1_new} has its origin in dynamic programming.

\section{Relation to the Fixed Point Approach}

\label{sec:fp}

The fixed point approach for solving the LQ mean field game consists of two steps (see e.g. \cite{HCM07}).

 Step 1. We use $\overline X \in C([0,T], \mathbb{R}^n)$ to
 approximate $X^{(N)}$ in \eqref{stateXi}
and consider the optimal control problem with dynamics and cost:
\begin{align}
&dX_i^\infty(t) = (AX_i^\infty(t)+Bu_i(t) +G \overline X(t))dt  +D dW_i, \label{xinfi} \\
&\bar J_i(u_i)= E\int_0^T  \left(
|X_i^\infty-\Gamma  \overline X -\eta|_Q^2 +u_i^TRu_i\right) dt \nonumber\\
 &\qquad\qquad +E|X_i^\infty(T)-\Gamma_f
 \overline X(T) -\eta_f|_{Q_f}^2,\nonumber
\end{align}
where we set $X_i^\infty(0)=X_i(0)$. The Brownian motion is the same as in \eqref{stateXi}.
Applying dynamic programming,  the optimal control law is given by
\begin{align}
\hat u_i= -R^{-1} B^T (\Lambda_1 X_i^\infty(t) +s(t)), \label{uihat}
 \end{align}
where $\Lambda_1$ is solved from \eqref{d11} and
$$
\dot{s}(t) = -(A^T-\Lambda_1M)s(t) -\Lambda_1 G\overline X(t)
+ {Q} (\Gamma \overline X(t) +\eta ),
$$
and $s(T)=- Q_f(\Gamma_f\overline X(T) +\eta_f)$.

Step 2. Let $\lim_{N\to \infty} \frac{1}{N}\sum_{i=1}^N EX_i^\infty(t)$ be determined from
the closed-loop system of \eqref{xinfi} under the control law $\hat u_i$
and the given $\overline X$. By the standard consistency requirement in mean field games \cite{HCM07}, we impose
$  \overline X(t) =\lim_{N\to \infty} \frac{1}{N}\sum_{i=1}^N EX_i^\infty(t)$ for all $t\in [0,T]$, which amounts to  specifying $\overline X$ as a fixed point.
This introduces the equation
$$
\frac{d\overline X }{ dt} = (A -M \Lambda_1 +G)\overline X - M   s,
$$
where $\overline X(0)= x_0$ and we assume $\lim_{N\to \infty} \frac{1}{N} \sum_{i=1}^N EX_i(0)= x_0$ as in Section \ref{sec:asy}.

Combining the  ODEs of $s$ and $\overline X$  gives
the MFG solution equation system
\begin{align}\label{mfgfp}
\begin{cases}
\frac{ d\overline X}{dt} = (A -M \Lambda_1+G)\overline X
- M   s ,   \\
 \dot{s}= -(A^T-\Lambda_1 M)s -\Lambda_1 G\overline X  + {Q} (\Gamma \overline X +\eta ),
\end{cases}
\end{align}
where $\overline X(0)=x_0$ and $s(T)=- Q_f(\Gamma_f\overline X(T) +\eta_f)$.
The equation system \eqref{mfgfp}
 is a TPBV problem.

\begin{remark}
We introduce in \eqref{mfgfp} the new notation $\overline X$
 instead of $\bar X$. It is necessary to maintain this distinction since the two functions coincide only under certain conditions as shown later.  \end{remark}

\subsection{Solving the TPBV Problem  }

\label{sec_10_MFG}

Denote
\begin{align} \label{mbbA}
\mathbb{A}(t) = \begin{bmatrix} A-M\Lambda_1(t) +G & -M \\
Q\Gamma-\Lambda_1(t) G & -A^T+\Lambda_1(t)M
\end{bmatrix}.
\end{align}
The fundamental solution matrix of \eqref{mfgfp} is determined by
 the matrix ODE
\begin{align}
\frac{\partial}{\partial t}\Phi(t, \tau)=\mathbb{A} \Phi(t, \tau), \quad \Phi(\tau, \tau)=I_{2n}. \label{phiAode}
\end{align}
Denote
\begin{align}\label{fundsol}
\Phi(t,\tau) =\begin{bmatrix} \Phi_{11}(t, \tau) & \Phi_{12} (t, \tau) \\
\Phi_{21}(t, \tau) & \Phi_{22}(t, \tau)
\end{bmatrix},
\end{align}
where each submatrix is $n\times n$.

Denote
\begin{align*}
&Z_1= \Phi_{22}(T,0)+Q_f \Gamma_f \Phi_{12}(T,0) \in \mathbb{R}^{n\times n},\\
& Z_2= [\Phi_{21}(T,0)+Q_f\Gamma_f \Phi_{11}(T,0)]x_0 + Q_f \eta_f\\
&\qquad + \int_0^T [\Phi_{22}(T,\tau)+Q_f\Gamma_f\Phi_{12}(T,\tau)]
Q\eta d\tau \in \mathbb{R}^{n}.
\end{align*}

\begin{proposition}\label{prop_1}
i) \eqref{mfgfp} has a solution if and only if
\begin{align}
Z_2\in \mbox{span} \{Z_1\}. \label{z12}
\end{align}

ii) If $\mbox{det}\big(Z_1\big)\neq 0$, \eqref{mfgfp} has a unique solution.
\end{proposition}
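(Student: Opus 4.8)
The plan is to recast the TPBV problem \eqref{mfgfp} as an inhomogeneous linear first-order system and then reduce the whole question to the solvability of a single linear algebraic equation for the unknown initial value $s(0)$. First I would stack the two unknowns into $\xi(t) = [\overline X^T(t), s^T(t)]^T \in \mathbb{R}^{2n}$, so that \eqref{mfgfp} becomes $\dot\xi = \mathbb{A}(t)\xi + b$ with the constant forcing term $b = [0, (Q\eta)^T]^T$ and $\mathbb{A}(t)$ exactly the matrix in \eqref{mbbA}. The boundary data split naturally by block: the first block is prescribed, $\overline X(0) = x_0$, while the second block $s(0)$ is free, and the terminal condition $s(T) = -Q_f(\Gamma_f \overline X(T) + \eta_f)$ is what couples the two ends.

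Next I would invoke variation of parameters with the fundamental matrix $\Phi$ of \eqref{phiAode}--\eqref{fundsol}. Since $\mathbb{A}(t)$ is continuous on $[0,T]$ (because $\Lambda_1$ solves \eqref{d11} on $[0,T]$), for every choice of $s(0)$ the initial value problem has a unique solution on all of $[0,T]$, namely $\xi(t) = \Phi(t,0)\xi(0) + \int_0^t \Phi(t,\tau) b\, d\tau$. Reading off the two blocks at $t=T$ expresses $\overline X(T)$ and $s(T)$ affinely in $s(0)$. Substituting these into the terminal condition and collecting the coefficient of $s(0)$ on one side yields, after a routine rearrangement, the linear equation
\begin{align}
Z_1 s(0) = -Z_2 \nonumber
\end{align}
in $\mathbb{R}^n$, where $Z_1$ and $Z_2$ are precisely the quantities defined just above the proposition; the $x_0$-term $\Phi_{21}(T,0)+Q_f\Gamma_f\Phi_{11}(T,0)$, the $\eta_f$-term, and the $\eta$-integral assemble exactly into $Z_2$, while the coefficient of $s(0)$ is $\Phi_{22}(T,0)+Q_f\Gamma_f\Phi_{12}(T,0) = Z_1$.

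With this reduction both assertions follow from elementary linear algebra. For part i), the map $s(0)\mapsto \xi$ is a bijection between candidate initial values and solutions of $\dot\xi = \mathbb{A}\xi + b$ with $\overline X(0)=x_0$ (distinct $s(0)$ give distinct $\xi(0)$, hence distinct $\xi$ by uniqueness of the IVP), and such a $\xi$ solves \eqref{mfgfp} exactly when its $s(0)$ satisfies $Z_1 s(0) = -Z_2$. Hence \eqref{mfgfp} is solvable iff $-Z_2$ lies in the range of $Z_1$; the range being a subspace, this is equivalent to $Z_2 \in \mbox{span}\{Z_1\}$. For part ii), if $\mbox{det}(Z_1)\neq 0$ then $s(0) = -Z_1^{-1}Z_2$ is the unique solution of the algebraic equation, and via variation of parameters it determines a unique solution of \eqref{mfgfp}.

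The argument carries essentially no conceptual obstacle; the only step demanding care is the bookkeeping in the substitution, where one must correctly match the blocks $\Phi_{ij}(T,0)$ and $\Phi_{ij}(T,\tau)$ together with the terminal factor $Q_f\Gamma_f$ against the stated definitions of $Z_1$ and $Z_2$, and confirm that the free-endpoint structure (prescribed $\overline X(0)$, free $s(0)$) leaves exactly $n$ scalar unknowns to be matched by the $n$ terminal constraints.
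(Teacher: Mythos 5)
Your proof is correct and follows essentially the same route as the paper's: both parametrize solutions of \eqref{mfgfp} by the free initial value $s(0)$, propagate it through the fundamental matrix $\Phi(T,0)$ and the forcing integral, and substitute into the terminal condition to reduce everything to the linear algebraic equation $Z_1 s(0) + Z_2 = 0$. Your write-up is in fact slightly more explicit than the paper's on two points it leaves implicit: the subspace argument identifying $-Z_2\in\mbox{range}(Z_1)$ with $Z_2\in\mbox{span}\{Z_1\}$, and the bijection between admissible $s(0)$ and solutions that makes uniqueness in part ii) immediate.
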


\begin{proof} i) We introduce $s(0)$ to be determined. By \eqref{mfgfp},
\begin{align*}
\begin{bmatrix} \overline X(T) \\ s(T) \end{bmatrix} =
& \Phi(T, 0)\begin{bmatrix} x_0 \\ s(0)\end{bmatrix} + \int_0^T \Phi(T, \tau)
\begin{bmatrix} 0 \\ Q\eta \end{bmatrix} d\tau \\
 =& \begin{bmatrix} \Phi_{11}(T,0) &\Phi_{12}(T,0) \\ \Phi_{21}(T,0)
 &\Phi_{22}(T,0) \end{bmatrix} \begin{bmatrix} x_0 \\ s(0) \end{bmatrix}\\
& + \int_0^T \begin{bmatrix} \Phi_{11}(T, \tau) &\Phi_{12}(T, \tau) \\ \Phi_{21}(T, \tau) & \Phi_{22}(T, \tau)\end{bmatrix} \begin{bmatrix} 0 \\ Q\eta \end{bmatrix} d\tau.
\end{align*}

Then \eqref{mfgfp} has a solution if and only if there exists $s(0)$ such that
\begin{align}\label{sTs0}
s(T)&=\Phi_{21}(T, 0)x_0 +\Phi_{22}(T, 0)s(0)+
\int_0^T \Phi_{22}(T, \tau)Q\eta d\tau,\nonumber\\
&= - Q_f(\Gamma_f\overline X(T) +\eta_f),
\end{align}
which is equivalent to
$Z_1 s(0)+ Z_2=0$.
This proves part i).

ii) This part follows from part i).
\end{proof}

For illustration, we consider the special case
with $\Gamma_f=0$, $\eta_f=0$. Then \eqref{z12} in Proposition \ref{prop_1} i)
becomes
\begin{align*}
&\Phi_{21}(T, 0) x_0 +\int_0^T \Phi_{22}(T, \tau) Q\eta d\tau
\in \mbox{span}\big\{\Phi_{22}(T, 0)\big\}.
\end{align*}

\subsection{Direct Approach Solvability Implies Fixed Point Solvability}
\begin{theorem} \label{theorem:astoda}
Suppose $\Lambda_2$ has a solution  on $[0,T]$. Then
the following holds.

i)
\eqref{mfgfp} has a unique solution $(\overline X, s)$ given by
$$
\begin{cases}
\overline X(t)= \bar X(t),\\
 s(t)= \Lambda_2(t) \bar X(t) +\chi_1(t),
\end{cases}
 $$
where $(\bar X, \chi_1)$ is solved from \eqref{theta_1_new} and
\eqref{clxbar} in  the direct  approach.

ii) Asymptotic solvability of the sequence of games \eqref{stateXi}--\eqref{costJi} implies that \eqref{mfgfp} has a unique solution.
\end{theorem}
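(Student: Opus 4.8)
The plan is to treat the non-symmetric Riccati ODE \eqref{d21} as a decoupling (Riccati) substitution for the linear Hamiltonian two-point boundary value problem \eqref{mfgfp}, realizing the idea of \cite{F02} alluded to in the introduction. Since $\Lambda_2$ is assumed to exist on all of $[0,T]$, I would set $s(t)=\Lambda_2(t)\overline X(t)+\chi(t)$ and ask what ODE the new unknown $\chi$ must satisfy. Differentiating and eliminating $\dot{\overline X}$ and $\dot s$ through the two equations of \eqref{mfgfp}, and inserting $\dot\Lambda_2$ from \eqref{d21}, the claim to verify is that every term involving $\overline X$ cancels, so that $\chi$ obeys a self-contained backward linear ODE while $\overline X$ obeys a forward linear ODE driven only by $\chi$. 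This triangularization is what simultaneously yields the explicit formula and uniqueness.

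For the key steps, I would first substitute $s=\Lambda_2\overline X+\chi$ into the $\overline X$-equation of \eqref{mfgfp} to obtain $\dot{\overline X}=(A-M(\Lambda_1+\Lambda_2)+G)\overline X-M\chi$, which is precisely \eqref{clxbar} once $\chi=\chi_1$. Next I would compute $\dot\chi=\dot s-\dot\Lambda_2\overline X-\Lambda_2\dot{\overline X}$; substituting the $s$-equation of \eqref{mfgfp}, the $\Lambda_2$-equation \eqref{d21}, and the $\overline X$-equation just obtained, the coefficient of $\overline X$ collapses to zero (the quadratic terms $\Lambda_1 M\Lambda_2$, $\Lambda_2 M\Lambda_1$, $\Lambda_2 M\Lambda_2$ and the linear terms in $A$ and $G$ all cancel against their counterparts), leaving exactly $\dot\chi=(\Lambda_1 M+\Lambda_2 M-A^T)\chi+Q\eta$, i.e.\ \eqref{theta_1_new}. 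The terminal data transform as $\chi(T)=s(T)-\Lambda_2(T)\overline X(T)=-Q_f(\Gamma_f\overline X(T)+\eta_f)+Q_f\Gamma_f\overline X(T)=-Q_f\eta_f$, matching \eqref{theta_1_new}. As \eqref{theta_1_new} is a linear backward ODE it has the unique solution $\chi_1$ on $[0,T]$; feeding $\chi_1$ into \eqref{clxbar} gives the unique forward solution $\bar X$. Undoing the substitution produces the unique pair $(\overline X,s)=(\bar X,\Lambda_2\bar X+\chi_1)$, which proves part i) in both existence and uniqueness.

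Part ii) is then immediate: by Theorem \ref{theorem:iff} asymptotic solvability is equivalent to \eqref{d21} having a unique solution on $[0,T]$, which is exactly the hypothesis of part i), so \eqref{mfgfp} has a unique solution.

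I expect the only genuine work to be the algebraic cancellation in the $\overline X$-coefficient of $\dot\chi$; this is routine but must be organized carefully, and it is where \eqref{d21} earns its role, being tailored precisely so that the $\overline X$-dependence of the transformed system disappears. The point demanding real care is global validity: the substitution $s=\Lambda_2\overline X+\chi$ is meaningful only where $\Lambda_2$ is defined, so the assumption that \eqref{d21} has a solution on the entire interval $[0,T]$ (rather than only up to some blow-up time) is exactly what licenses the decoupling on all of $[0,T]$. As an alternative route to uniqueness I could instead invoke Proposition \ref{prop_1} ii), arguing that existence of $\Lambda_2$ forces $\det(Z_1)\neq 0$ through the fundamental matrix of \eqref{phiAode}, but the decoupling argument above is preferable since it delivers existence, uniqueness, and the explicit form in a single stroke.
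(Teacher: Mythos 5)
Your proposal is correct and follows essentially the same route as the paper's proof: the paper makes the identical decoupling substitution $s=\Lambda_2\overline X+\varphi$, observes that $\varphi$ then satisfies the self-contained backward linear ODE \eqref{theta_1_new} with terminal value $-Q_f\eta_f$ (so $\varphi=\chi_1$), and concludes $\overline X=\bar X$ and uniqueness exactly as you do, with part ii) likewise following from Theorem \ref{theorem:iff}. Your write-up merely spells out the algebraic cancellation and the terminal-condition computation that the paper leaves implicit.
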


\begin{proof} i) For \eqref{mfgfp}, we write
\begin{align}
s= \Lambda_2\overline X+ \varphi(t), \label{sL2p}
\end{align}
where $\varphi$ is a new unknown function.
Now \eqref{mfgfp} is transformed into a new equation system in terms of $(\overline X, \varphi)$, where
$$
\dot\varphi = (\Lambda_1 M +\Lambda_2M  -A^T)\varphi +Q\eta,\quad \varphi(T)= -Q_f\eta_f.
$$
The terminal condition $\varphi(T)$ has been determined from
\eqref{sL2p} with $t=T$.
We can uniquely solve $\varphi$ and in fact $\varphi =\chi_1$. Subsequently, we further obtain $\overline X=\bar X$. It is clear the solution $(\overline X, s)$ is unique.

ii) This part follows from  Theorem
\ref{theorem:iff} and part i).
\end{proof}

Let \eqref{uihat} be applied by the $N$ players in \eqref{stateXi}, and accordingly denote
\begin{align}
\hat u_i^d= -R^{-1} B^T (\Lambda_1 X_i(t) +s(t)). \label{uhdi}
\end{align}
Under the asymptotic solvability condition, the two control laws $\hat u_i^d$ in \eqref{uhdi} and $u_i^d$ in \eqref{ud1} are equivalent by Theorem \ref{theorem:astoda}.
Based on assumptions on the initial states as given in Proposition \ref{prop:err}, one can apply the standard method in \cite{HCM07} to show that the set of strategies $(\hat u_1^d, \cdots, \hat u_N^d)$ in \eqref{uhdi} is an $\epsilon$-Nash equilibrium of the $N$-player game, where $\epsilon\to 0$ as $N\to\infty$.

The  existence and uniqueness condition in the TPBV problem
is quite different from the condition for
asymptotic solvability. It is possible that the Riccati equation  of $\Lambda_2$ has a finite escape time in $[0, T)$ but the TPBV problem is still solvable. A detailed comparison will be developed in the next section for scalar models.

\subsection{Fixed Point via A Contraction Mapping}

The original analysis in \cite{HCM07} applies
the fixed point approach to infinite time horizon LQ mean field
games and establishes existence and uniqueness of a solution by
specifying a contraction mapping. The procedure in \cite{HCM07}
can be applied to \eqref{mfgfp} to derive a corresponding contraction condition as well. By Theorem \ref{theorem:astoda}, asymptotic solvability in the direct approach implies the fixed point solvability, but the converse may not hold (and is indeed not true as it turns out later).
Now if the fixed point is determined from a contraction mapping
as in \cite{HCM07}, an intriguing question is
what is its implication regarding asymptotic solvability. Below we show asymptotic solvability holds in this case.

To facilitate further analysis, we consider \eqref{mfgfp} on a general interval  $[t_0, T]$ for $t_0\in [0,T)$,  and rewrite it as below:
\begin{align}\label{mfgt0}
\begin{cases}
\frac{ d\overline X}{dt} = (A -M \Lambda_1+G)\overline X
- M   s ,   \\
 \dot{s}= -(A^T-\Lambda_1 M)s -\Lambda_1 G\overline X  + {Q} (\Gamma \overline X +\eta ).
\end{cases}
\end{align}
The initial and terminal  conditions are given by $\overline X(t_0)=x_{t_0}$ and $s(T)=- Q_f(\Gamma_f\overline X(T) +\eta_f)$.

Denote the linear ODEs
\begin{align*}
&\dot y_1= (A-M \Lambda_1(t) +G)y_1,\quad
\dot y_2= (-A^T+ \Lambda_1(t)M)y_2,
\end{align*}
where $t\in [0,T]$ and $y_i(t)\in \mathbb{R}^n$.
Let $\Psi_1$ and $\Psi_2$ be their fundamental solution matrices
so that
\begin{align*}
&\frac{\partial \Psi_1 (t, \tau)}{\partial t}
 =(A-M \Lambda_1(t) +G) \Psi_1(t, \tau), \quad \Psi_1(\tau, \tau)=I,\\
&\frac{\partial \Psi_2 (t, \tau)}{\partial t}
 =(-A^T+ \Lambda_1(t) M ) \Psi_2(t, \tau), \quad \Psi_2(\tau, \tau)=I.
\end{align*}

 Following
the procedure in \cite{HCM07}, we solve $s$
from the second equation of \eqref{mfgt0} to obtain
\begin{align}
s(t)=& -\Psi_2(t,T) Q_f \Gamma_f \overline X(T)\nonumber\\
&-\int_t^T \Psi_2(t,r)
(Q \Gamma - \Lambda_1(r) G) \overline X(r)dr %\nonumber \\
+\zeta_1(t), \label{sz1}
\end{align}
where $\zeta_1$ depends on $(\eta, \eta_f)$ but not on $\overline X$.
Substituting \eqref{sz1} into
 the first equation of \eqref{mfgt0}, we have the expression
\begin{align}
\overline X(t)=& \Psi_1(t, t_0) x_{t_0}+\int_{t_0}^t \Psi_1(t, \tau) M \Psi_2(\tau, T) Q_f \Gamma_f \overline X(T)d \tau \nonumber \\
  & + \int_{t_0}^t \int_\tau^T \Psi_1(t, \tau)M \Psi_2(\tau, r) (Q\Gamma-\Lambda_1(r)G)\overline X(r)dr d\tau \nonumber \\
&+\zeta_2(t),\label{xodet0}
\end{align}
where $\zeta_2$ depends on $(\eta, \eta_f)$ but not on $(\overline X, x_{t_0})$.
Denote the operator
$\Upsilon_{t_0}$: $C([t_0, T], \mathbb{R}^n) \to C([t_0,T], \mathbb{R}^n)$  as follows:
\begin{align*}
(\Upsilon_{t_0} \phi)(t) &=\int_{t_0}^t \Psi_1(t, \tau) M \Psi_2(\tau, T) Q_f \Gamma_f \phi(T)d \tau \\
  & + \int_{t_0}^t \int_\tau^T \Psi_1(t, \tau)M \Psi_2(\tau, r) (Q\Gamma-\Lambda_1(r)G)\phi(r)dr d\tau.
\end{align*}
We
take the norm $\|\phi\|=\sup_{t\in[t_0,T]}|\phi(t)|$ in $C([t_0, T], \mathbb{R}^n)$.  Now  \eqref{xodet0} can be written as
$$
\overline X(t)=\Psi_1(t, t_0) x_{t_0} +(\Upsilon_{t_0} \overline X)(t) +\zeta_2(t), \qquad t\in [t_0, T].
$$

Denote the constant
\begin{align}
\kappa_{t_0} = &\sup_{t_0\le t\le T} \Big[ \int_{t_0}^t \int_\tau^T \left|\Psi_1(t, \tau)M \Psi_2(\tau, r) [Q\Gamma-\Lambda_1(r)G]\right|dr d\tau \nonumber \\
&+\int_{t_0}^t| \Psi_1(t, \tau) M \Psi_2(\tau, T) Q_f \Gamma_f |d \tau \Big]. \nonumber
\end{align}
We have the estimate
$$
\|\Upsilon_{t_0} \phi_1 -\Upsilon_{t_0}\phi_2\|\le  \kappa_{t_0}\|\phi_1-\phi_2\|, \quad \forall\ \phi_1, \phi_2\in C([t_0, T],\mathbb{R}^n).
$$
It is straightforward to check that $\kappa_{t_0}\le \kappa_0$ for all $t_0\in [0,T]$.

\begin{theorem} \label{theorem:conAS}
Suppose $\kappa_0 <1$. Then asymptotic solvability holds for the sequence of games \eqref{stateXi}--\eqref{costJi}.
\end{theorem}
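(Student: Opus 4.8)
The plan is to reduce the claim, via Theorem~\ref{theorem:iff}, to showing that the non-symmetric Riccati ODE \eqref{d21} for $\Lambda_2$ admits a solution on the \emph{whole} interval $[0,T]$, i.e.\ that $\Lambda_2$ has no finite escape time. The only hypothesis available is the global contraction bound $\kappa_0<1$, so the first task is to convert this into information about $\Lambda_2$. The bridge is the identity $s=\Lambda_2\overline X+\varphi$ used in the proof of Theorem~\ref{theorem:astoda}: its homogeneous part shows that $\Lambda_2$ is precisely the Riccati (sweep) variable that decouples the linear TPBV system \eqref{mfgt0} governed by $\mathbb{A}(t)$ in \eqref{mbbA}.

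First I would localize the contraction estimate. Since $\kappa_{t_0}\le\kappa_0<1$ for every $t_0\in[0,T)$, the operator $\Upsilon_{t_0}$ on $C([t_0,T],\mathbb{R}^n)$ is a contraction, so $I-\Upsilon_{t_0}$ is boundedly invertible and the TPBV problem \eqref{mfgt0} on $[t_0,T]$ has a unique solution $(\overline X,s)$ for \emph{every} initial state $x_{t_0}$ and every pair $(\eta,\eta_f)$. In particular, taking $x_{t_0}=0$ and $\eta=\eta_f=0$, the only fixed point of the linear contraction $\Upsilon_{t_0}$ is $\overline X\equiv0$, whence $s\equiv0$ by \eqref{sz1}; that is, the homogeneous TPBV problem on $[t_0,T]$ has only the trivial solution.

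Next I would convert this uniqueness into invertibility of the analogue of $Z_1$ posed with initial time $t_0$. Writing the homogeneous problem through the fundamental solution $\Phi(\cdot,t_0)$ of \eqref{phiAode} exactly as in the proof of Proposition~\ref{prop_1}, with $\overline X(t_0)=0$ the terminal condition $s(T)=-Q_f\Gamma_f\overline X(T)$ collapses to $Z_1(t_0)\,s(t_0)=0$, where $Z_1(t_0)=\Phi_{22}(T,t_0)+Q_f\Gamma_f\Phi_{12}(T,t_0)$. Triviality of the homogeneous solution forces $s(t_0)=0$, so $Z_1(t_0)$ is injective and hence, being square, invertible for every $t_0\in[0,T]$. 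Because $\Phi$ is continuous, $Z_1(\cdot)$ and therefore $Z_1(\cdot)^{-1}$ are continuous on $[0,T]$, so
\[
\Lambda_2(t)=-Z_1(t)^{-1}\bigl(\Phi_{21}(T,t)+Q_f\Gamma_f\Phi_{11}(T,t)\bigr)
\]
is a well-defined continuous (indeed $C^1$) matrix function on all of $[0,T]$. A direct differentiation (the sweep/Radon-lemma computation, equivalently substituting $s=\Lambda_2\overline X$ into \eqref{mfgt0} as in Theorem~\ref{theorem:astoda}) verifies that this $\Lambda_2$ satisfies \eqref{d21}, with the correct terminal value $\Lambda_2(T)=-Q_f\Gamma_f$ since $\Phi(T,T)=I_{2n}$ gives $Z_1(T)=I$. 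Thus \eqref{d21} has a solution on $[0,T]$, and asymptotic solvability follows from Theorem~\ref{theorem:iff}.

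The step I expect to be the main obstacle is the equivalence between nonexistence of a finite escape time for $\Lambda_2$ and invertibility of $Z_1(t)$ for all $t\in[0,T]$: one must check that the Riccati representation built from $\Phi$ genuinely coincides with the maximal solution of \eqref{d21} on each subinterval, and that invertibility of $Z_1(t)$ throughout $[0,T]$ is exactly what precludes a blow-up. The remaining ingredients (the Banach contraction argument localized to $[t_0,T]$ and the differentiation verifying \eqref{d21}) are routine.
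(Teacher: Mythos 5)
Your proof is correct, but it takes a genuinely different route from the paper's. The paper argues by contradiction: assuming $\Lambda_2$ has a finite escape time $t^*\in[0,T)$, it picks a sequence $t_k\downarrow t^*$ along which an entry $\Lambda_2^{\hat i,\hat j}$ blows up, uses the localized contraction $\kappa_{t_k}\le\kappa_0<1$ to solve the homogeneous TPBV problem \eqref{mfgtk} on $[t_k,T]$ with initial condition $e^n_{\hat j}$ and obtain a bound on $(\overline X^*,s^*)$ uniform in $k$, and then invokes the decoupling identity $s^*(t)=\Lambda_2(t)\overline X^*(t)$ from Theorem \ref{theorem:astoda} (valid on $[t_k,T]\subset(t^*,T]$) to conclude that $s^*(t_k)=\Lambda_2(t_k)e^n_{\hat j}$ is unbounded, a contradiction. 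You instead give a direct construction: the same localized contraction shows the homogeneous TPBV problem on each $[t_0,T]$ has only the trivial solution, hence $Z_1(t_0)=\Phi_{22}(T,t_0)+Q_f\Gamma_f\Phi_{12}(T,t_0)$ is invertible for every $t_0$ (any nonzero $w$ in its kernel would launch, from $(\overline X(t_0),s(t_0))=(0,w)$, a nontrivial homogeneous solution), and then the Radon-lemma computation confirms that $\Lambda_2:=-Z_1^{-1}N_1$, with $N_1(t)=\Phi_{21}(T,t)+Q_f\Gamma_f\Phi_{11}(T,t)$, solves \eqref{d21} with terminal value $-Q_f\Gamma_f$; indeed, $[N_1,Z_1]=[Q_f\Gamma_f,\,I]\Phi(T,\cdot)$ satisfies $\frac{d}{dt}[N_1,Z_1]=-[N_1,Z_1]\mathbb{A}(t)$, and carrying out the differentiation of $-Z_1^{-1}N_1$ reproduces exactly the right-hand side of \eqref{d21}, so this step does go through. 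What each approach buys: yours is constructive, yields an explicit formula for $\Lambda_2$ on all of $[0,T]$, and incidentally extends Proposition \ref{prop_1} ii) to every initial time; the paper's avoids the fundamental-matrix calculus entirely by recycling the already-proved identity $s=\Lambda_2\overline X$, at the price of an indirect blow-up argument and an entry-selection subsequence. One remark: the ``main obstacle'' you flag (equivalence between absence of escape time and invertibility of $Z_1(t)$ on $[0,T]$) is not actually needed in your framing — your construction produces a solution of \eqref{d21} on $[0,T]$ outright, and uniqueness then follows from local Lipschitz continuity of the Riccati vector field, which is all that Theorem \ref{theorem:iff} requires.
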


\begin{proof}
We prove by contradiction. Suppose asymptotic solvability doe not hold for \eqref{stateXi}--\eqref{costJi}, which  implies $\Lambda_2$ has a maximal existence interval $(t^*, T]$ for $t^*\in [0, T)$. So there exists a strictly decreasing sequence $\{t_k, k\ge 1\}$ converging to $t^*$ such that $\lim_{k\to \infty}|\Lambda_2(t_k)|=\infty$.
We can find an appropriate subsequence, still denoted by  $\{t_{k}, k\ge 1 \}$, such that for some $(\hat i, \hat j)$, we have
\begin{align}\label{Lhatij}
\lim_{k\to \infty}|\Lambda_2^{\hat i, \hat j} (t_{k})|=\infty,\quad
|\Lambda_2^{\hat i, \hat j} (t_{k})| =\max_{1\le i,j\le n}  |\Lambda_2^{ i,  j} (t_{k})|,
\end{align}
where the superscripts indicate the $(i,j)$-th entry of $\Lambda_2(t_k)$.

Now for $t_k$ in \eqref{Lhatij}, we select $x_{t_{k}}= [0, \ldots, 0, 1, 0, \ldots, 0 ]^T=e^n_{\hat j}$, and solve a special form of  \eqref{mfgt0} on $[t_{k}, T]$
as follows:
\begin{align}\label{mfgtk}
\begin{cases}
\frac{ d\overline X^*}{dt} = (A -M \Lambda_1+G)\overline X^*
- M   s^* ,   \\
 \dot{s}^*= -(A^T-\Lambda_1 M)s^* +( Q\Gamma -\Lambda_1 G)\overline X^*,
\end{cases}
\end{align}
 which has  initial condition $\overline X^*(t_{k})=x_{t_{k}} =e^n_{\hat j}$ and terminal condition $s^*(T)=- Q_f\Gamma_f\overline X^*(T) $. By the relation
$$
\overline X^*(t)=\Psi_1(t, t_{k}) x_{t_{k}}+ (\Upsilon_{t_{k}} \overline X^*)(t) , \qquad t\in [t_{k},T]
$$
and $\kappa_{t_{k}}\le \kappa_0$,
we obtain a unique solution $( \overline  X^*, s^*  )
\in C([t_k, T], \mathbb{R}^{2n})$ for \eqref{mfgtk} and have  the bound
$$
\|\overline X^*\|\le \frac{1}{1-\kappa_0} \sup_{t,\tau\in [0,T]} | \Psi_1(t, \tau)|. $$

In parallel to  \eqref{sz1},
\begin{align}
s^*(t)=& -\Psi_2(t,T) Q_f \Gamma_f \overline X^*(T) \nonumber\\
&-\int_t^T \Psi_2(t,r)
(Q \Gamma - \Lambda_1(r) G) \overline X^*(r)dr.\nonumber
\end{align}
 We may further find a fixed constant $C_0$ independent of $t_{k}$ such that
\begin{align}
\sup_{t\in [t_k, T]}[|\overline X^*(t)|+|s^*(t)|]\le C_0 . \label{xsc0}
\end{align}
On the other hand, for each $t_k$ appearing in \eqref{Lhatij} and the resulting  interval $[t_{k}, T]$, by the fact that $\Lambda_2$ exists on $(t^*, T]\supset[t_k,T]$, we may use the method in Theorem \ref{theorem:astoda} to show the relation
$$
s^*(t)= \Lambda_2(t) \overline X^* (t), \qquad t\in [t_{k}, T].
$$
Hence
$s^*(t_{k}) =\Lambda_2(t_{k})e^n_{\hat j}$,
and by \eqref{Lhatij},
$$
\lim_{k\to \infty}|s^*(t_{k}) | =\infty,
$$
which contradicts  \eqref{xsc0}.

We conclude that $\Lambda_2$ has a solution on $[0,T]$.
Therefore, asymptotic solvability holds for \eqref{stateXi}--\eqref{costJi}.
\end{proof}

\begin{remark}
We use $\kappa_0<1$ to ensure a contraction condition for the TPBV problem defined on $[0,T]$. It is possible to have improved contraction estimates.
Our method here is adequate for addressing the qualitative relation as shown in Theorem~\ref{theorem:conAS}.   
\end{remark}

\section{The Scalar Case: Explicit Solutions}

\label{sec:scal}

\subsection{Riccati Equations of Asymptotic Solvability}

We analyze a scalar case of
 the Riccati ODEs \eqref{d11} and \eqref{d21}, i.e., $n=1$, and
 suppose $B\ne 0$ for the model to be nontrivial.   Consider
\begin{numcases}{}
\dot{\Lambda}_1=\Lambda_1^2-2A\Lambda_1-Q, \label{sric1} \\
\dot{\Lambda}_2=2\Lambda_1\Lambda_2+\Lambda_2^2-\Lambda_2(2A+G)-\Lambda_1 G+Q\Gamma, \label{sric2}
\end{numcases}
where $\Lambda_1(T)=Q_f$ and $\Lambda_2(T)=-Q_f\Gamma_f$.
Without loss of generality we only deal with the case $M=1$ since otherwise  a change of variable may be used to convert \eqref{d11}--\eqref{d21} to the above form with  appropriately modified  parameters $Q$ and $Q_f$.
 Although $\Lambda_1(t)$ can be explicitly  solved for a general $Q_f$, one usually cannot further solve $\Lambda_2(t)$ in a closed form.
 To overcome this difficulty, we will further take particular choices of the terminal conditions to obtain explicit solutions.
Our method is to choose $Q_f$ appropriately to solve $\Lambda_1(t)$
as a constant so that \eqref{sric2} becomes a Riccati equation with constant coefficients.

In this section we further suppose the pair $(A, \sqrt{Q})$ is detectable.
Denote the algebraic Riccati equation
\begin{align*}
\Lambda_{1\infty}^2-2A\Lambda_{1\infty}-Q=0,
\end{align*}
which gives the stabilizing solution
\begin{align}\label{sol1}
\Lambda_{1\infty}=A+\sqrt{A^2+Q}\ge 0,
\end{align}
such that $A-M\Lambda_{1\infty}=A-\Lambda_{1\infty} <0$.  Below we take
\begin{align} \label{QGf}
M=1,\quad  Q_f=\Lambda_{1\infty},\quad \Gamma_f=0.
\end{align}
Then \eqref{sric1} has a constant solution $\Lambda_1(t)\equiv \Lambda_{1\infty}$, and  \eqref{sric2} becomes
\begin{align}\label{richa}
&\dot{\Lambda}_2 =2\hat a  \Lambda_2 +\Lambda_2^2 +\hat Q,\quad
 \Lambda_2(T)=0,
\end{align}
where
\begin{align}\label{haq}
\hat a=\sqrt{A^2+Q}-\frac{G}{2},\quad \hat{Q}=Q\Gamma-\big(A+\sqrt{A^2+Q}\big)G.
\end{align}
To solve \eqref{richa}, let
$\Lambda_2=-\frac{u'}{u}$.
Then \eqref{richa} leads to
\begin{align}
u''-2\hat au'+\hat{Q}u=0. \label{odeu}
\end{align}
Denote
$$
\hat\Delta = \hat a^2-\hat Q=\frac{1}{4}(2A+G)^2+Q(1-\Gamma).
$$

\begin{proposition} \label{prop:global}
The Riccati ODE
\eqref{richa} has a unique solution on $[0,T]$ for all $T>0$
 under either  of the two conditions:
i) $\hat Q\le 0$;
ii) $0<\hat Q\le \hat a^2$ and $\hat a>0$.
\end{proposition}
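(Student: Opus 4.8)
The plan is to exploit the linearization $\Lambda_2=-u'/u$ already introduced in \eqref{odeu}, which turns the scalar Riccati ODE \eqref{richa} into the constant-coefficient second-order linear equation $u''-2\hat a u'+\hat Q u=0$. Since this linear equation has solutions defined for all $t$, the crux is that $\Lambda_2$ exists and stays finite on $[0,T]$ precisely when the associated $u$ has no zero there: a finite escape time of $\Lambda_2$ corresponds exactly to a zero of $u$. Uniqueness, wherever the solution exists, is automatic from the local Lipschitz property of the right-hand side of \eqref{richa}. So the whole statement reduces to showing that the relevant $u$ does not vanish on $[0,T]$, and in fact I will show it does not vanish on $(-\infty,T]$, which yields the conclusion for all $T>0$ at once.

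First I would fix the boundary data: the terminal condition $\Lambda_2(T)=0$ translates into $u'(T)=0$, and since $u$ is determined only up to a nonzero scalar multiple I normalize $u(T)=1$. It is convenient to reverse time by setting $s=T-t$ and $v(s)=u(T-s)$, so that $v$ solves $v''+2\hat a v'+\hat Q v=0$ with initial data $v(0)=1$, $v'(0)=0$, and non-vanishing of $u$ on $[0,T]$ becomes $v(s)\neq 0$ for $s\in[0,T]$. The characteristic roots are $\mu_\pm=-\hat a\pm\sqrt{\hat\Delta}$ with $\hat\Delta=\hat a^2-\hat Q$. The key preliminary observation is that both hypotheses force $\hat\Delta\ge 0$: under (i) this is immediate since $\hat Q\le 0\le \hat a^2$, and under (ii) it is exactly the assumption $\hat Q\le\hat a^2$. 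Hence the roots are real in every case and $v$ can be written explicitly.

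Next I carry out the sign analysis. For distinct roots ($\hat\Delta>0$) the normalized solution is $v(s)=(\mu_1 e^{\mu_2 s}-\mu_2 e^{\mu_1 s})/(\mu_1-\mu_2)$ with $\mu_1>\mu_2$. Under (i) the product $\mu_1\mu_2=\hat Q\le 0$ forces $\mu_1\ge 0\ge\mu_2$, so the numerator is a sum of two nonnegative terms which cannot both vanish (the roots being distinct), giving $v>0$ for $s\ge 0$. Under (ii) the relations $\mu_1\mu_2=\hat Q>0$ and $\mu_1+\mu_2=-2\hat a<0$ force both roots strictly negative; here $v>0$ is equivalent to $\mu_1 e^{(\mu_2-\mu_1)s}>\mu_2$, which holds because $\mu_1 e^{(\mu_2-\mu_1)s}\ge\mu_1>\mu_2$ for $s\ge 0$ (using $\mu_1<0$ and $\mu_2-\mu_1<0$). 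The repeated-root boundary $\hat\Delta=0$ arises only as $\hat a=\hat Q=0$ (giving $v\equiv 1$) or, under (ii), as $\hat Q=\hat a^2$ with $\hat a>0$, where $v(s)=(1+\hat a s)e^{-\hat a s}>0$ for $s\ge 0$. In every case $v>0$ throughout $[0,\infty)$, hence $u\neq 0$ on $(-\infty,T]$ and $\Lambda_2$ exists (uniquely) on $[0,T]$ for every $T>0$.

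I expect the only genuinely non-trivial step to be the case of two distinct negative roots in (ii), where the explicit combination has mixed signs and termwise positivity fails; the short monotonicity inequality above is what rescues it. The remaining bookkeeping — verifying $\hat\Delta\ge 0$, matching the boundary data, and treating the degenerate sub-cases $\hat Q=0$ and $\hat Q=\hat a^2$ — is routine, and uniqueness requires no separate argument beyond the Lipschitz property already noted.
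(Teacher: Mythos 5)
Your proof is correct. It rests on the same linearization $\Lambda_2=-u'/u$ and the constant-coefficient equation \eqref{odeu} that the paper itself introduces just before the proposition, but your route through the two cases is genuinely different from the paper's. The paper disposes of case i) by observing that for $\hat Q\le 0$ equation \eqref{richa} is, after matching signs, the Riccati ODE of a standard LQ optimal control problem (state cost $-\hat Q\ge 0$, terminal weight $0$), so global existence follows by citation of classical control theory; only case ii) is treated through \eqref{odeu}, and there the paper derives the explicit closed-form expressions for $\Lambda_2$ and checks that their denominators cannot vanish on $[0,T]$. You instead treat both cases uniformly: time-reverse, normalize $v(0)=1$, $v'(0)=0$, note that both hypotheses force $\hat\Delta\ge 0$ hence real roots, and prove $v>0$ on $[0,\infty)$ by a sign analysis of the roots --- $\mu_1\ge 0\ge \mu_2$ with the two terms of the numerator not simultaneously vanishing in case i), and both roots negative with the monotonicity bound $\mu_1 e^{(\mu_2-\mu_1)s}\ge \mu_1>\mu_2$ in case ii) --- together with the correct handling of the repeated-root sub-cases. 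Both arguments are sound; yours buys a self-contained elementary proof that avoids invoking LQ theory, unifies the two hypotheses under a single non-vanishing criterion, and yields the slightly stronger conclusion that $\Lambda_2$ exists on $(-\infty,T]$, i.e., for every horizon at once. The paper's version buys brevity in case i) and, in case ii), the explicit solution formulas, which are not wasted: they serve as the computational template reused in Proposition \ref{prop:max} and Remark \ref{remark:checkT} for the regimes where finite escape does occur.
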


\begin{proof} See Appendix D. \end{proof}

\begin{proposition}
\label{prop:max}
i) If $0<\hat Q\le \hat a^2$ and $\hat a<0$,  the solution of \eqref{richa} is given by
$$
\Lambda_2(t)=
\begin{cases}
\frac{\hat Q\left(e^{\alpha(T-t)}-e^{-\alpha(T-t)}\right)}{\hat\lambda_2 e^{-\alpha(T-t)}-\hat\lambda_1 e^{\alpha(T-t)}}, &\quad
 \mbox{if}\ \hat Q<\hat a^2,\\
 \frac{\hat a^2(T-t)}{\hat a(t-T)-1}, &\quad \mbox{if}\ \hat Q=\hat a^2,
\end{cases}
$$ where  $\alpha= \sqrt{\hat \Delta}$ and
$\hat\lambda_1=\hat a+ \alpha$,  $\hat\lambda_2=\hat a-\alpha$ are solutions to
 the characteristic equation of \eqref{odeu}.

ii) If $\hat Q> \hat a^2$, then
\begin{align}\label{la2}
\Lambda_2(t) =& \frac{\sqrt{\hat{Q}}  \sin\beta(t-T)}{\sin\big(\beta(T-t)+\theta\big)},
\end{align}
where $\beta =\sqrt{\hat Q-\hat a^2}>0$ and
$\hat a+\beta i= \sqrt{\hat Q} e^{i\theta}$ for $\theta\in (0,\pi)$.
\end{proposition}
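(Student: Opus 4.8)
The plan is to exploit the linearizing substitution $\Lambda_2 = -u'/u$ recorded just before the statement, which turns the Riccati equation \eqref{richa} into the constant-coefficient linear ODE \eqref{odeu}, namely $u'' - 2\hat a u' + \hat Q u = 0$. Because the map $u \mapsto -u'/u$ is invariant under scaling of $u$, I may normalize the terminal data freely: the condition $\Lambda_2(T) = 0$ translates into $u(T) = 1$ together with $u'(T) = 0$. Thus the whole problem reduces to solving a second-order linear ODE with prescribed terminal values and substituting back. I would record the characteristic roots $\hat\lambda_{1,2} = \hat a \pm \sqrt{\hat a^2 - \hat Q} = \hat a \pm \alpha$, noting the Vieta relations $\hat\lambda_1 + \hat\lambda_2 = 2\hat a$ and $\hat\lambda_1\hat\lambda_2 = \hat Q$, and to ease the terminal conditions I would write every general solution in exponentials/powers of $t-T$ rather than of $t$.

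Part i): when $\hat Q < \hat a^2$ the roots are real and distinct, so I take $u(t) = A_1 e^{\hat\lambda_1(t-T)} + A_2 e^{\hat\lambda_2(t-T)}$ and solve $A_1 + A_2 = 1$, $A_1\hat\lambda_1 + A_2\hat\lambda_2 = 0$, giving $A_1 = -\hat\lambda_2/(2\alpha)$, $A_2 = \hat\lambda_1/(2\alpha)$ with $\alpha = \sqrt{\hat\Delta}$. Forming $\Lambda_2 = -u'/u$, using $\hat\lambda_1\hat\lambda_2 = \hat Q$, factoring out $e^{\hat a(t-T)}$, and rewriting $t-T = -(T-t)$ collapses the expression to the stated quotient $\hat Q(e^{\alpha(T-t)}-e^{-\alpha(T-t)})/(\hat\lambda_2 e^{-\alpha(T-t)}-\hat\lambda_1 e^{\alpha(T-t)})$. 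For the double-root case $\hat Q = \hat a^2$ I use $u(t) = (1 - \hat a(t-T))e^{\hat a(t-T)}$, which satisfies $u(T)=1$, $u'(T)=0$; a short differentiation gives $u' = -\hat a^2(t-T)e^{\hat a(t-T)}$, so $\Lambda_2 = \hat a^2(t-T)/(1 - \hat a(t-T))$, which equals the stated $\hat a^2(T-t)/(\hat a(t-T)-1)$ after factoring a sign out of numerator and denominator.

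Part ii): when $\hat Q > \hat a^2$ the roots are the conjugate pair $\hat a \pm i\beta$ with $\beta = \sqrt{\hat Q-\hat a^2}$, so I take $u(t) = e^{\hat a(t-T)}\bigl(\cos\beta(t-T) - (\hat a/\beta)\sin\beta(t-T)\bigr)$, where the cosine coefficient $1$ comes from $u(T)=1$ and the sine coefficient $-\hat a/\beta$ from $u'(T)=0$. Differentiating and using $\hat a^2 + \beta^2 = \hat Q$ collapses $u'$ to $-(\hat Q/\beta)e^{\hat a(t-T)}\sin\beta(t-T)$, so $\Lambda_2 = -u'/u$ has numerator proportional to $\sin\beta(t-T)$ and denominator $\beta\cos\beta(t-T) - \hat a\sin\beta(t-T)$. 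Writing $\hat a + i\beta = \sqrt{\hat Q}\,e^{i\theta}$, so that $\cos\theta = \hat a/\sqrt{\hat Q}$ and $\sin\theta = \beta/\sqrt{\hat Q}$ with $\theta \in (0,\pi)$ since $\beta > 0$, consolidates the denominator into $\sqrt{\hat Q}\sin(\beta(T-t)+\theta)$ and yields the claimed formula \eqref{la2}.

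The main obstacle I anticipate is precisely this phase consolidation in part ii): turning $\beta\cos\beta(t-T) - \hat a\sin\beta(t-T)$ into the single sinusoid $\sqrt{\hat Q}\sin(\beta(T-t)+\theta)$ requires careful bookkeeping of the sign convention in $t-T$ versus $T-t$ and of the quadrant of $\theta$; everything else is routine constant-coefficient ODE work plus the identities $\hat\lambda_1\hat\lambda_2 = \hat Q$ and $\hat a^2+\beta^2 = \hat Q$. I would close by remarking that, since $\Lambda_2 = -u'/u$ becomes singular exactly at the zeros of $u$, each displayed formula is the genuine solution on the largest subinterval of $[0,T]$ abutting $T$ on which $u$ does not vanish; this is consistent with the finite escape that can occur in these regimes, in contrast to the global existence guaranteed by Proposition \ref{prop:global}.
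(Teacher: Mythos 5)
Your proposal is correct and follows essentially the same route as the paper's Appendix D proof: the substitution $\Lambda_2=-u'/u$ reducing \eqref{richa} to the constant-coefficient equation \eqref{odeu}, terminal data inherited from $\Lambda_2(T)=0$, and case-by-case back-substitution according to the characteristic roots. Your normalization $u(T)=1$, $u'(T)=0$ with solutions written in powers of $t-T$ is in fact a slight streamlining, since it avoids the paper's separate handling of the degenerate situation $\cos(\beta T+\theta)=0$ in part ii), where the paper must switch which coefficient is normalized to $1$.
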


\begin{proof} See Appendix D.  \end{proof}

\begin{remark}
The assumptions in the four cases in Propositions \ref{prop:global} and \ref{prop:max} are categorized according to the distribution of the two
eigenvalues of the characteristic equation of \eqref{odeu}.
\end{remark}

\begin{remark}
Depending on the value of $T$, the solutions in both i) and ii) of  Proposition \ref{prop:max} may have a maximal existence internal as a proper subset of $[0,T]$.
\end{remark}

\begin{remark}\label{remark:checkT}
If
\begin{align}
  0<\hat Q<\hat a^2, \quad \hat a<0, \label{dc}
  \end{align}
 by Proposition \ref{prop:max},  $\Lambda_2$ has a finite escape time $\hat t \in[0,T)$
satisfying $T-\hat t= \check T := \frac{\ln (\hat\lambda_2/\hat\lambda_1)}{2\alpha}$ if $\check T\in (0, T]$.
\end{remark}

\begin{example}\label{ex1}
Consider the system with
\begin{align*}
 \quad A= -\frac{1}{4}, \quad G=\frac{4}{5}, \quad Q=\frac{1}{16},\quad  \Gamma =\frac{4}{3}.
\end{align*}
It can be verified that the system satisfies \eqref{dc}.
\end{example}

The  parameters in Example \ref{ex1} are constructed by first fixing $A$ and $Q$, and next searching for $(G, \Gamma)$
subject to the two constraints in \eqref{dc}.

\subsection{The TPBV Problem and Non-uniqueness}
\label{sec:sub:no}

For the scalar case  $n=1$, we take
 $M=1$ and $Q_f=\Lambda_{1\infty}$
 so that $\Lambda_1=\Lambda_{1\infty}$.
 Then \eqref{mbbA}
 reduces to the form
\begin{align}
{\mathbb A}_\infty=
\begin{bmatrix}
G-\sqrt{A^2+Q} &-1 \\
Q\Gamma - (A + \sqrt{A^2+Q})G  & \sqrt{A^2+Q}
\end{bmatrix} \in \mathbb{R}^{2\times 2},
\end{align}
which has the characteristic polynomial
$$
|\lambda I-{\mathbb A}_\infty|= \lambda^2-G\lambda +Q\Gamma -(A^2+Q+AG).
$$
Note that for the TPBV problem \eqref{mfgfp} in the fixed point approach to have multiple solutions, a necessary condition is that asymptotic solvability fails by Theorem \ref{theorem:astoda}.
For constructing non-uniqueness results, below we largely impose conditions in Proposition \ref{prop:max} i).  
If
\begin{align}
  \hat \Delta >0,   \label{dege0}
\end{align}
 $|\lambda I-{\mathbb A}_\infty|=0$ has  the real-valued solutions
\begin{align}
\lambda_1= \frac{G}{2}+\sqrt{\hat\Delta},
\quad \lambda_2= \frac{G}{2}-\sqrt{\hat\Delta}. \label{dl12}
\end{align}
Restricting our attention to two distinct real roots will streamline  the
presentation in constructing non-uniqueness examples.
Under \eqref{dege0},
denote
\begin{align*}
&c_1=-\hat a-\sqrt{\hat\Delta},
\quad c_2=- \hat a+\sqrt{\hat\Delta}  .
\end{align*}
 ${\mathbb A}_\infty$   has two eigenvectors
\begin{align*}
v_k=
[1, c_k]^T, \qquad k=1,2.
\end{align*}
corresponding to the eigenvalues  $\lambda_1$ and $\lambda_2$,
respectively. Now for \eqref{phiAode},
we have
$$
\Phi(t, \tau)= [v_1, v_2]
\begin{bmatrix}
e^{\lambda_1 (t-\tau)}  & \\
   &e^{\lambda_2 (t-\tau)}
\end{bmatrix}
[v_1, v_2]^{-1}
$$
as a $2\times 2$ matrix function.
We further calculate
\begin{align}
&\Phi_{21} (t,\tau)
 =\frac{ c_1c_2 ( e^{\lambda_1 (t-\tau)}
- e^{\lambda_2 (t-\tau)})}{2\sqrt{\hat\Delta}},\label{p21}  \\
&\Phi_{22}(t, \tau) = \frac{c_2 e^{\lambda_2 (t-\tau)} - c_1 e^{\lambda_1 (t-\tau)}   }{2\sqrt{\hat\Delta}}. \nonumber
\end{align}

Given the parameters in  \eqref{QGf}, \eqref{mfgfp} becomes
 \begin{align}\label{mfgfp1}
\begin{cases}
\frac{ d\overline X}{dt} = (A - \Lambda_{1\infty}+G)\overline X
-   s ,   \\
 \dot{s}= -(A^T-\Lambda_{1\infty} )s -\Lambda_{1\infty} G\overline X  + {Q} (\Gamma \overline X +\eta ),
\end{cases}
\end{align}
where $\overline X(0)=x_0$ and $s(T)=- \Lambda_{1\infty}\eta_f$.

In order to construct models with non-uniqueness results, here we
 treat $T$ and $x_0$ in \eqref{mfgfp1} as  adjustable parameters.

\begin{proposition} \label{prop:that}
Assume $Q_f=\Lambda_{1\infty}$.
 If \eqref{dc} holds,
then $\Phi_{21}(T,0)\ne 0$ for all $T>0$ and
there exists a unique  $\hat T>0$ such that $\Phi_{22}(\hat T,0)=0$.
\end{proposition}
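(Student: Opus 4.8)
The plan is to work directly with the closed-form expressions \eqref{p21} for $\Phi_{21}$ and $\Phi_{22}$, which are legitimate here precisely because condition \eqref{dc} forces $\hat\Delta=\hat a^2-\hat Q>0$; this makes the eigenvalues $\lambda_1,\lambda_2$ in \eqref{dl12} real and distinct, so the diagonalization of $\mathbb{A}_\infty$ underlying \eqref{p21} applies. The whole argument then reduces to sign and monotonicity bookkeeping for the constants $c_1,c_2$, and the crucial preliminary step I would carry out is to record the identity $c_1c_2=(-\hat a)^2-\hat\Delta=\hat a^2-(\hat a^2-\hat Q)=\hat Q$ together with the individual signs of $c_1,c_2$. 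Since $0<\hat Q<\hat a^2$ gives $0<\sqrt{\hat\Delta}<|\hat a|=-\hat a$ (using $\hat a<0$), both $c_1=-\hat a-\sqrt{\hat\Delta}$ and $c_2=-\hat a+\sqrt{\hat\Delta}$ come out strictly positive, while $c_2-c_1=2\sqrt{\hat\Delta}>0$.

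For the first assertion I would substitute $t=T$, $\tau=0$ into the formula for $\Phi_{21}$, obtaining $\Phi_{21}(T,0)=c_1c_2\big(e^{\lambda_1 T}-e^{\lambda_2 T}\big)/(2\sqrt{\hat\Delta})$. For any $T>0$ the factor $e^{\lambda_1 T}-e^{\lambda_2 T}$ is nonzero because $\lambda_1\neq\lambda_2$ and the exponential is injective, whereas $c_1c_2=\hat Q>0$ by \eqref{dc}; hence $\Phi_{21}(T,0)\neq 0$ for every $T>0$.

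For the second assertion, I would note that $\Phi_{22}(T,0)=0$ is equivalent, after clearing the positive denominator, to $c_2 e^{\lambda_2 T}=c_1 e^{\lambda_1 T}$, i.e. to $e^{(\lambda_1-\lambda_2)T}=c_2/c_1$ with $\lambda_1-\lambda_2=2\sqrt{\hat\Delta}>0$. Because $c_1,c_2>0$ and $c_2>c_1$, the ratio $c_2/c_1$ is a genuine real number strictly larger than $1$; and since $T\mapsto e^{2\sqrt{\hat\Delta}\,T}$ is continuous and strictly increasing, equals $1$ at $T=0$ and tends to $+\infty$, it attains the value $c_2/c_1$ at exactly one point $\hat T=\ln(c_2/c_1)/(2\sqrt{\hat\Delta})>0$, giving both existence and uniqueness of $\hat T$.

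The only genuine obstacle lies in the sign analysis of the third step: one must secure both $c_2/c_1>0$ (so the logarithm is defined) and $c_2/c_1>1$ (so the unique root is positive), and both conclusions hinge on $\hat a<0$. Indeed, in the regime $\hat a>0$ of Proposition \ref{prop:global} ii) one would instead find $c_1,c_2<0$ and $c_2/c_1\in(0,1)$, forcing $\hat T<0$; this is exactly why the hypotheses of \eqref{dc} are indispensable. I would finally observe, as a consistency check rather than a step of the proof, that $c_1=-\hat\lambda_1$ and $c_2=-\hat\lambda_2$, so $\hat T=\ln(\hat\lambda_2/\hat\lambda_1)/(2\alpha)=\check T$ of Remark \ref{remark:checkT}, which is the expected link between the vanishing of $\Phi_{22}$ and the finite escape time of $\Lambda_2$.
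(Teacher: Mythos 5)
Your proposal is correct and follows essentially the same route as the paper's proof: both work from the explicit formulas \eqref{p21} for $\Phi_{21}$ and $\Phi_{22}$, establish from \eqref{dc} that $0<c_1<c_2$ (with $\hat\Delta>0$), conclude $\Phi_{21}(T,0)\ne 0$ from $c_1c_2>0$, and solve $c_2e^{\lambda_2 T}=c_1e^{\lambda_1 T}$ uniquely for $\hat T=\ln(c_2/c_1)/(2\sqrt{\hat\Delta})=\check T>0$. Your write-up is in fact slightly more explicit than the paper's, which asserts the equivalence of \eqref{dc} with $\hat\Delta>0,\ c_1>0$ without detail, whereas you verify the sign bookkeeping (including the useful identity $c_1c_2=\hat Q$) and the monotonicity argument giving existence, uniqueness, and positivity of $\hat T$ in one stroke.
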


\begin{proof}
It can be shown that \eqref{dc}  holds if and only if
\begin{align} \label{dc2}
\hat\Delta>0, \quad c_1>0,
\end{align}
which implies that   $$0<\lambda_2<\lambda_1, \quad 0<c_1<c_2,
$$
where $\lambda_1$ and $\lambda_2$ are given by \eqref{dl12}.
It is clear that $\Phi_{21}(T,0)\ne 0$.
Note that  $\Phi_{22}(T,0)=0$ if and only if
$c_2 e^{\lambda_2 T} = c_1 e^{\lambda_1 T}, $
for which we uniquely solve
$
T= \hat T  :=\frac{\ln (c_2/c_1) }{2\sqrt{\hat\Delta}} = \check T>0$; see Remark \ref{remark:checkT} for $\check T$.
 \end{proof}

For constructing the  TPBV problem below, we suppose
 the assumptions of Proposition \ref{prop:that} hold, and uniquely solve $\hat x_0$ from
\begin{align}\label{hatx0}
&\Phi_{21}(\hat T, 0) \hat x_0 +
\int_0^{\hat T} \Phi_{22}(\hat T, \tau)Q\eta d\tau
= -\Lambda_{1\infty}  \eta_f
\end{align}
since $\Phi_{21}(\hat T, 0)\ne 0$.
We calculate
\begin{align}
\int_0^{\hat T}\Phi_{22}(\hat T, \tau) d\tau   %\nonumber  \\
 =& \frac{c_2\lambda_1 (e^{\lambda_2 \hat T}-1) -c_1\lambda_2 (e^{\lambda_1 \hat T}-1)}{2\sqrt{\hat \Delta}\lambda_1\lambda_2}
. \nonumber
\end{align}

Now for the scalar case with $M=1$, $Q_f=\Lambda_{1\infty}$, $\Gamma_f=0$ and $T=\hat T$,  \eqref{mfgfp1}
specializes to the  TPBV problem
\begin{align}\label{mfgfp0}
\begin{cases}
\frac{ d\overline X}{dt} = (A - \Lambda_{1\infty}+G)\overline X -    s ,   \\
 \dot{s}(t)= -(A^T-\Lambda_{1\infty} )s -\Lambda_{1\infty} G\overline X  + {Q} (\Gamma \overline X +\eta ),
\end{cases}
\end{align}
where $\overline X(0)=\hat x_0$ and $s(\hat T)=- \Lambda_{1\infty}\eta_f$.

\begin{proposition} Assume \eqref{QGf} and \eqref{dc} hold. Then  a solution $(\overline X, s)$ of  \eqref{mfgfp0} can be obtained by taking any initial condition $s(0)$. Therefore, \eqref{mfgfp0} has an infinite number of solutions.
\end{proposition}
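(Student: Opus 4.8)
The plan is to reduce the statement to the algebraic solvability criterion already established in Proposition \ref{prop_1}, and to observe that the special construction of $\hat T$ and $\hat x_0$ forces that criterion to degenerate completely. Recall from the proof of Proposition \ref{prop_1} that, treating $s(0)$ as a free parameter and propagating the linear dynamics \eqref{mfgfp0} forward via the fundamental matrix $\Phi$, a solution exists precisely when $s(0)$ can be chosen so that the propagated terminal value of $s$ matches the prescribed terminal condition $s(\hat T)=-\Lambda_{1\infty}\eta_f$. In the present scalar setting with $\Gamma_f=0$ and $Q_f=\Lambda_{1\infty}$, this requirement reads
\begin{align*}
-\Lambda_{1\infty}\eta_f = \Phi_{21}(\hat T, 0)\hat x_0 + \Phi_{22}(\hat T, 0)\, s(0) + \int_0^{\hat T}\Phi_{22}(\hat T, \tau) Q\eta\, d\tau,
\end{align*}
that is, $Z_1 s(0)+Z_2=0$ with $Z_1=\Phi_{22}(\hat T,0)$ and $Z_2=\Phi_{21}(\hat T,0)\hat x_0+\Lambda_{1\infty}\eta_f+\int_0^{\hat T}\Phi_{22}(\hat T,\tau)Q\eta\,d\tau$.

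First I would invoke Proposition \ref{prop:that}, which under \eqref{dc} gives exactly $\Phi_{22}(\hat T,0)=0$; hence $Z_1=0$ and the coefficient of $s(0)$ vanishes. Next I would use the defining relation \eqref{hatx0} for $\hat x_0$, namely $\Phi_{21}(\hat T,0)\hat x_0+\int_0^{\hat T}\Phi_{22}(\hat T,\tau)Q\eta\,d\tau=-\Lambda_{1\infty}\eta_f$, to conclude that the constant term also vanishes, $Z_2=0$. Thus the solvability condition $Z_1 s(0)+Z_2=0$ collapses to the trivial identity $0=0$, which holds for every $s(0)\in\mathbb{R}$.

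It then remains to record that each admissible $s(0)$ yields a genuine solution and that distinct choices are distinct. For any fixed $s(0)$, integrating \eqref{mfgfp0} forward from $(\overline X(0),s(0))=(\hat x_0,s(0))$ produces $(\overline X,s)$ that satisfies the initial condition $\overline X(0)=\hat x_0$ by construction and the terminal condition $s(\hat T)=-\Lambda_{1\infty}\eta_f$ by the computation above, so it solves the TPBV problem. Since the flow of a linear ODE depends injectively on its initial data, two different values of $s(0)$ give solutions that already differ at $t=0$ in the $s$-component, so the solution set is in bijection with $\mathbb{R}$ and is therefore infinite. I do not expect a genuine obstacle here: the substantive work, producing a horizon $\hat T$ at which $\Phi_{22}(\hat T,0)=0$ and an initial mean $\hat x_0$ annihilating the inhomogeneous term, has already been carried out in Proposition \ref{prop:that} and \eqref{hatx0}, and the present proposition is essentially the observation that these two facts make $Z_1$ and $Z_2$ vanish simultaneously, removing every constraint on $s(0)$.
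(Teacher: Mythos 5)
Your proof is correct and follows essentially the same route as the paper: both reduce to the terminal-matching condition $Z_1 s(0)+Z_2=0$ from Proposition \ref{prop_1}, then observe that Proposition \ref{prop:that} gives $Z_1=\Phi_{22}(\hat T,0)=0$ while the defining relation \eqref{hatx0} for $\hat x_0$ gives $Z_2=0$, so every $s(0)$ works. Your closing remark that distinct values of $s(0)$ yield distinct solutions is a harmless explicit addition the paper leaves implicit.
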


\begin{proof} Recalling \eqref{sTs0},
  \eqref{mfgfp0} is solvable if and only if
one can find  $s(0)$ to satisfy
\begin{align}
&\Phi_{21}( \hat T, 0)\hat x_0 +\Phi_{22}( \hat T, 0)s(0)+
\int_0^{ \hat T} \Phi_{22}( \hat T, \tau)Q\eta  d\tau \nonumber \\
=& -\Lambda_{1\infty}  \eta_f. \label{phixs}
\end{align}
By  \eqref{hatx0} and $\Phi_{22}(\hat T, 0)=0$, \eqref{phixs}
 holds for any choice of $s(0)$.~\end{proof}

\begin{figure}[t]
\begin{center}
\psfig{file=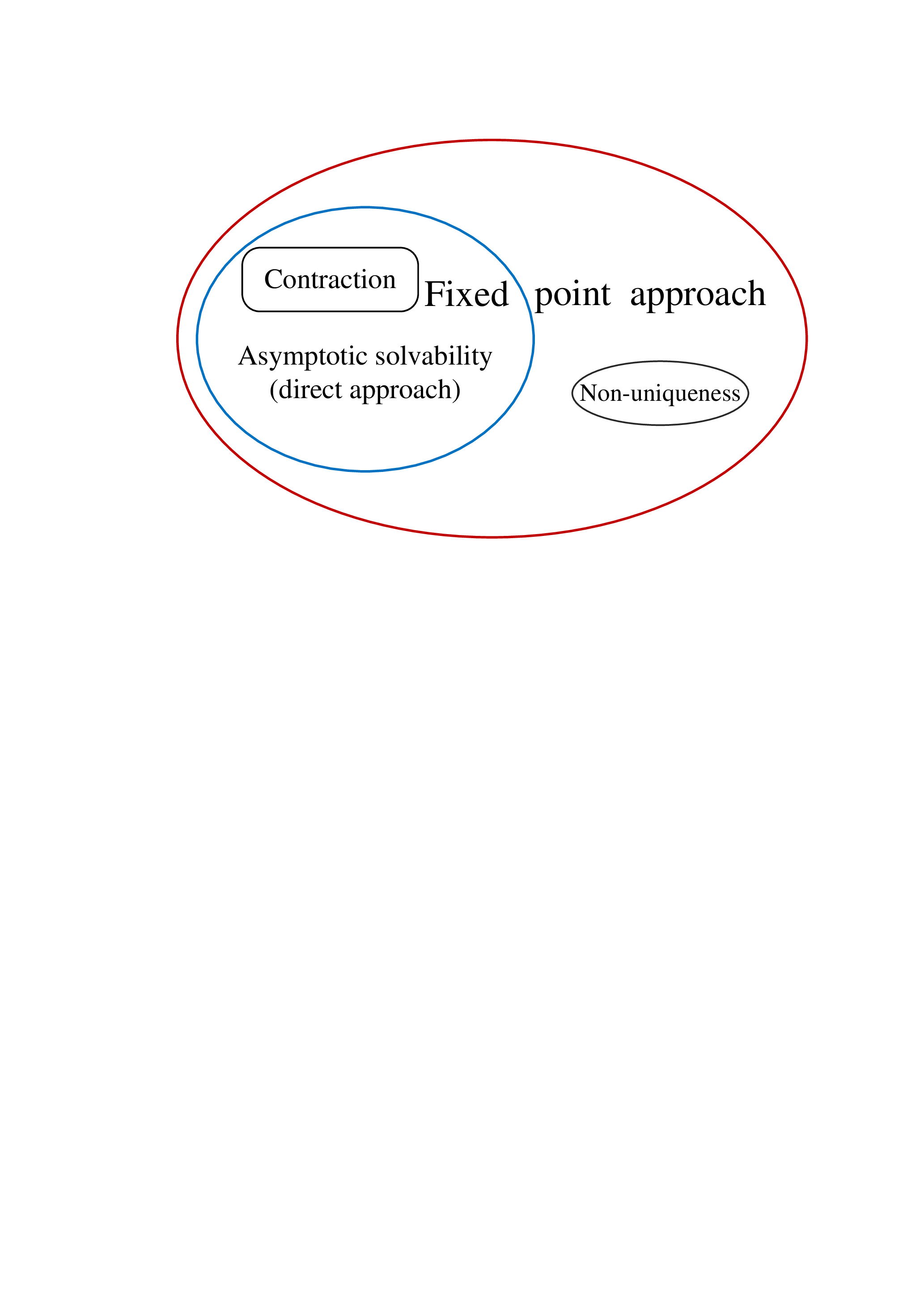, width=2.1in, height=1.4in}
\end{center}
\caption{Comparison of the two  approaches  }
 \label{fig:diadomain}
\end{figure}

\subsection{Comparison of Two Approaches}
Consider the system given by Example \ref{ex1} with time horizon $[0,T]$.
It satisfies \eqref{dc}. Then $\check T=\hat T$.

If we take $T\in (0, \hat T)$, then asymptotic solvability holds and
the TPBV problem \eqref{mfgfp1} has a unique solution by Theorem
\ref{theorem:astoda}.

 If $T=\hat T$, then $\Lambda_2$ has a finite escape time at $t=0$ implying no asymptotic solvability. However, in this case the TPBV problem \eqref{mfgfp0} has an infinite number of solutions, which in turn can be used to construct an infinite number of $\epsilon$-Nash equilibria for the $N$-player game.

If $T>\hat T$, asymptotic solvability fails but  \eqref{mfgfp1} has a unique solution since $\Phi_{22}(T,0)\ne 0$ by Proposition \ref{prop:that}.

Based on Theorems \ref{theorem:astoda} and \ref{theorem:conAS}, and the comparison above, the relation between the two approaches is illustrated in
Fig. \ref{fig:diadomain}. The rectangle region represents models satisfying the contraction condition $\kappa_0<1$ in Theorem \ref{theorem:conAS}.

\section{Long Time Behavior}

\label{sec:long}

For this section, we make the following assumption:

(H1) The pair $(A,B)$ is stabilizable,
and the pair $(A, Q^{\frac12})$ is
detectable.

Within the setup of continuous time dynamical systems, a matrix $Z\in \mathbb{R}^{k\times k}$ is called stable or Hurwitz if all its eigenvalues have a strictly negative real part.

\subsection{Steady State Form of  Riccati ODEs}

For \eqref{d11}, we introduce the ARE
$$
\Lambda_{1\infty} M \Lambda_{1\infty} -(\Lambda_{1\infty} A +A^T \Lambda_{1\infty})-Q=0.
$$
Note that under (H1)
 there exists a unique solution  $\Lambda_{1\infty}\ge 0$
 from the class of positive semi-definite matrices.
Corresponding to   \eqref{d21}, we introduce the  algebraic equation
\begin{align}
0 =\ & \Lambda_{1\infty} M \Lambda_{2\infty}+ \Lambda_{2\infty} M
\Lambda_{1\infty}+ \Lambda_{2\infty}M\Lambda_{2\infty} \nonumber\\
                    & - (\Lambda_{1\infty} G + \Lambda_{2\infty} (A+G) +A^T\Lambda_{2\infty})
                     +Q\Gamma,\label{ared21}
\end{align}
which is a non-symmetric algebraic Riccati equation (NARE).
When \eqref{ared21} has a solution in $\mathbb{R}^{n\times n}$,
it is possible that multiple such solutions exist.
The question is how to determine a solution of interest, and this amounts to imposing appropriate constraints on the solution. For related methods on choosing a desirable solution of NAREs by fulfilling some stability conditions, see \cite{KS02}.

\subsection{Stabilizing Solution}
Suppose $\Lambda_{2\infty} \in \mathbb{R}^{n\times n} $ is a solution to \eqref{ared21}.
Denote
\begin{align}
&A_G= A-M (\Lambda_{1\infty}+\Lambda_{2\infty})+G, \label{ag}\\
& A_{M}= A -M(\Lambda_{1\infty}+\Lambda_{2\infty}^T).\label{am}
\end{align}
To motivate the restrictions to be imposed on $\Lambda_{2\infty}$,
we examine the two ODEs \eqref{theta_1_new}
and \eqref{clxbar}, where the latter is  the closed-loop mean field dynamics.
We start by checking the stability of the solution of \eqref{theta_1_new}
 when
$t$ is simply allowed to tend to $-\infty$.
If $\Lambda_2(t)$ can converge to a limit $\Lambda_{2\infty}$ at all, it is well justified to study the stability of the limiting ODE
\begin{align}
\dot \chi_1& = [(\Lambda_{1\infty}+\Lambda_{2\infty}) M -A^T] \chi_1+Q\eta \nonumber\\
    &= -A_M^T \chi_1 +Q\eta, \qquad t\in (-\infty, T], \label{ch1inf}
\end{align}
which is constructed by replacing $(\Lambda_1(t), \Lambda_{2}(t))$ by
$(\Lambda_{1\infty}, \Lambda_{2\infty})$ in
\eqref{theta_1_new}.  The solution of \eqref{ch1inf} converges to a constant vector $\chi_{1\infty}$
 as $t\to -\infty$ if $A_M$ is Hurwitz.
 Thus  the generation of  stable long time behavior suggests we impose a stability condition on $A_M$.
 For \eqref{clxbar} we  similarly introduce a limiting ODE of the form
 \begin{align}
 \frac{d\bar X}{dt} &= \left(A-M (\Lambda_{1\infty} +\Lambda_{2\infty})+G\right)\bar X -M \chi_{1\infty} \nonumber \\
&= A_G \bar X   -M \chi_{1\infty},\qquad t\in [0,\infty),
 \end{align}
and further introduce a stability condition on $A_G$ in order to have a stable solution.

\begin{definition}
$\Lambda_{2\infty}\in \mathbb{R}^{n\times n}$ is called a stabilizing solution
of \eqref{ared21}
if it satisfies \eqref{ared21} and both
 $A_G$ and $A_M$
are Hurwitz.
\end{definition}

If $\Lambda_{2\infty}$ is a stabilizing solution,  it has the interpretation as a locally stable equilibrium point of the Riccati ODE \eqref{d21}.  We take a limiting form of \eqref{d21} by replacing $\Lambda_1$ by $\Lambda_{1\infty}$ and for convenience of analysis  next  reverse time to obtain the new equation
\begin{align}
\dot Y(t) = & -\Lambda_{1\infty} M Y- Y M\Lambda_{1\infty}- YMY\nonumber\\
                    & + (\Lambda_{1\infty} G + Y (A+G)
                    +A^TY)  -Q\Gamma, \quad t\ge 0, \label{Yode}
\end{align}
for which we take a general initial condition $Y(0)$.
The linearized ODE for
\eqref{Yode} around $\Lambda_{2\infty}$
 is \begin{align}
\dot Z(t)=
A_M^T Z+ ZA_G , \qquad t\ge 0,  \nonumber
\end{align}
which is a Sylvester ODE with solution
$
Z(t)=  e^{A_M^T t} Z(0) e^{A_Gt}.
$
This ODE is asymptotically stable given any $Z(0)$
if the matrices $A_G$ and $A_M$ are Hurwitz.

We proceed to determine conditions for existence of a stabilizing solution.
Denote
\begin{align}
{\mathbb A}_\infty=\begin{bmatrix}
A- M \Lambda_{1\infty} +G   & -M\\
Q\Gamma -\Lambda_{1\infty} G & -A^T+ \Lambda_{1\infty} M
\end{bmatrix}\in \mathbb{R}^{2n\times 2n},
\end{align}
which may be viewed as a steady state form of
${\mathbb A}(t)$ in \eqref{mbbA}.

Let ${\mathbb A}_o\in \mathbb{R}^{k\times k}$ be any matrix.
An $l$-dimensional subspace ${\cal V}$ of $\mathbb{R}^{k}$ is called
an invariant subspace of  ${\mathbb A}_o$ if
${\mathbb A}_o {\cal V} \subset {\cal V}$; in this case
${\mathbb A}_o V =V A_o$ for some $A_o \in\mathbb{R}^{l\times l}$
where $V\in \mathbb{R}^{k\times l}$ and ${\rm span} \{V\}={\cal V}$.
If $A_o$ is Hurwitz, ${\cal V}$ is called a stable invariant subspace.
Below we give some standard definitions related to structural properties of an invariant subspace (see e.g. \cite{BIM12,LR95}).
For $1\le l<k$, an $l$-dimensional  invariant subspace ${\cal V}_g$ of ${\mathbb A}_o\in \mathbb{R}^{k\times k}$  is called a graph subspace if ${\cal V}_g$ is spanned by the columns of a $k\times l$ matrix whose leading $l\times l$ submatrix (i.e., its first $l$ rows) is invertible. The $k $ eigenvalues of ${\mathbb A}_o$  have a strong $(k_1,k_2)$ c-splitting if the open left half plane and the open right half plane  contain $k_1$ and $k_2$ eigenvalues, respectively, for $k_1\ge 1$, $k_2\ge 1$, $k_1+k_2=k$.

We introduce the following condition on
$\mathbb{A}_\infty$:

(H$_{g}$)  The eigenvalues of
${\mathbb A}_\infty$ are strong $(n,n)$ $c$-splitting and the
associated $n$-dimensional stable
invariant subspace is a graph subspace.

\begin{theorem} \label{theorem:stab}
i) The NARE \eqref{ared21} has a stabilizing solution $\Lambda_{2\infty}$ if and only if (H$_{g}$) holds.

ii) If (H$_g$) holds, \eqref{ared21} has  a unique stabilizing solution.  \end{theorem}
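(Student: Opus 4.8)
The plan is to identify $\Lambda_{2\infty}$ with the slope of a graph subspace of $\mathbb{A}_\infty$ and then read off the two Hurwitz conditions from a single block-triangularization. First I would set up the algebraic dictionary: a matrix $X\in\mathbb{R}^{n\times n}$ solves the NARE \eqref{ared21} if and only if the subspace spanned by the columns of $\begin{bmatrix} I \\ X \end{bmatrix}$ is invariant under $\mathbb{A}_\infty$, i.e. $\mathbb{A}_\infty \begin{bmatrix} I \\ X \end{bmatrix} = \begin{bmatrix} I \\ X \end{bmatrix} W$ for some $W\in\mathbb{R}^{n\times n}$. Expanding the top block forces $W = (A-M\Lambda_{1\infty}+G)-MX$, and substituting this $W$ into the bottom block reproduces \eqref{ared21} verbatim with $X$ in place of $\Lambda_{2\infty}$. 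In particular, for $X=\Lambda_{2\infty}$ one gets $W=A_G$ as in \eqref{ag}.

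Next I would block-triangularize $\mathbb{A}_\infty$ using the change of basis $T=\begin{bmatrix} I & 0 \\ \Lambda_{2\infty} & I \end{bmatrix}$. A direct computation yields
$$
T^{-1}\mathbb{A}_\infty T=\begin{bmatrix} A_G & -M \\ 0 & -A_M^T \end{bmatrix},
$$
where the vanishing of the lower-left block is exactly the NARE, and the lower-right block simplifies to $-A_M^T$ upon using the symmetry of $M=BR^{-1}B^T$ together with the definition \eqref{am} of $A_M$. Consequently the spectrum of $\mathbb{A}_\infty$ is the disjoint union of $\sigma(A_G)$ and $\sigma(-A_M^T)$. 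I expect this identity to be the most delicate step, since obtaining precisely $-A_M^T$ (rather than a generic block) requires careful bookkeeping of transposes and the symmetry $M^T=M$; everything else follows formally once this is in hand.

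With the dictionary established, part i) follows in both directions. For necessity, a stabilizing $\Lambda_{2\infty}$ makes $A_G$ Hurwitz, so the graph subspace is stable and supplies $n$ eigenvalues of $\mathbb{A}_\infty$ in the open left half plane, while $A_M$ Hurwitz makes $-A_M^T$ supply $n$ eigenvalues in the open right half plane; thus $\mathbb{A}_\infty$ has strong $(n,n)$ c-splitting with stable invariant subspace equal to the graph subspace, which is (H$_g$). For sufficiency, (H$_g$) provides an $n$-dimensional stable invariant graph subspace spanned by $\begin{bmatrix} U_1 \\ U_2 \end{bmatrix}$ with $U_1$ invertible; setting $\Lambda_{2\infty}=U_2U_1^{-1}$ solves the NARE by the dictionary, and the restriction of $\mathbb{A}_\infty$ to this subspace is $A_G$, which is Hurwitz. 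Since the splitting is strong $(n,n)$ and $A_G$ already carries all $n$ left-half-plane eigenvalues, the block $-A_M^T$ must carry the $n$ right-half-plane eigenvalues, forcing $A_M$ to be Hurwitz; hence $\Lambda_{2\infty}$ is stabilizing.

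Finally, for part ii) I would invoke uniqueness of the stable invariant subspace. Under strong $(n,n)$ c-splitting the sum of the generalized eigenspaces associated with the left-half-plane eigenvalues is the unique $n$-dimensional invariant subspace of $\mathbb{A}_\infty$ on which the restriction is Hurwitz. Any stabilizing solution produces, via the dictionary, an $n$-dimensional stable invariant graph subspace, which must therefore coincide with this canonical subspace. Two stabilizing solutions would then yield the same subspace and hence the same slope $\Lambda_{2\infty}=U_2U_1^{-1}$, giving uniqueness.
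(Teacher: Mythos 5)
Your proposal is correct and follows essentially the same route as the paper's proof: the same unipotent similarity $\begin{bmatrix} I & 0 \\ \Lambda_{2\infty} & I \end{bmatrix}$ block-triangularizing $\mathbb{A}_\infty$ into $\begin{bmatrix} A_G & -M \\ 0 & -A_M^T \end{bmatrix}$, the same graph-subspace correspondence $\Lambda_{2\infty}=U_2U_1^{-1}$ for sufficiency, and uniqueness via uniqueness of the $n$-dimensional stable invariant subspace. The only cosmetic differences are that you verify the invariant-subspace/NARE dictionary by direct computation where the paper cites a standard reference, and your uniqueness step appeals to the canonical spectral subspace rather than the paper's column-by-column comparison of $\begin{bmatrix} I_n \\ \Lambda_{2\infty}\end{bmatrix}$ and $\begin{bmatrix} I_n \\ \bar\Lambda_{2\infty}\end{bmatrix}$.
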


\begin{proof} i) Step 1. To show necessity, suppose that $\Lambda_{2\infty}$ is a stabilizing solution.
Denote
$$
K=\begin{bmatrix}
I_n & 0 \\
\Lambda_{2\infty} &I_n
\end{bmatrix}.$$
Since \eqref{ared21} holds, it can be checked that
\begin{align} \label{KAK}
K^{-1} {\mathbb A}_\infty K=
\begin{bmatrix}
A_G  & -M \\
0 &-A_M^T
\end{bmatrix}.
\end{align}
By the definition of a stabilizing solution, $A_G$ and $A_M$ are Hurwitz. So $-A_M^T$ has all its eigenvalues in the open right half plane.
Therefore, the eigenvalues of  ${\mathbb A}_\infty$ have a strong $(n,n)$ $c$-splitting.
 Now the columns of
  \begin{align}
\begin{bmatrix}
I_n\\
\Lambda_{2\infty}
\end{bmatrix} \nonumber
   \end{align}
span the $n$-dimensional stable invariant subspace of ${\mathbb A}_\infty$ as a graph subspace.

Step 2. We continue to  show sufficiency. Suppose
the columns of the matrix
\begin{align} \label{U12}
\begin{bmatrix}
U_1\\
U_2
\end{bmatrix}\in \mathbb{R}^{2n\times n}
\end{align}
spans the $n$-dimensional stable invariant subspace of $\mathbb{A}_\infty$, where
$U_1$ is invertible.
We take
\begin{align}\label{u12}
\Lambda_{2\infty}= U_2U_1^{-1}.
\end{align}
Then one can
directly verify that
$\Lambda_{2\infty}$ solves \eqref{ared21}
(see e.g. \cite[Corollary 2.2,  pp. 34]{BIM12}), and \eqref{KAK} holds where $A_M$ and $A_G$ in \eqref{ag}--\eqref{am}  are determined by
use of \eqref{u12}.
Since $A_G$ is associated with the stable invariant subspace, it is necessarily a Hurwitz matrix. Since the eigenvalues of ${\mathbb A}_\infty$ are $(n,n)$ $c$-splitting, $-A_M^T$ has $n$ eigenvalues
in the open right half plane, which implies that $A_M$ is Hurwitz. Hence, \eqref{ared21} has  a
stabilizing solution.

ii) Suppose $\Lambda_{2\infty}$ and $\bar \Lambda_{2\infty}$ are two stabilizing solutions. Denote
$$
Y= \begin{bmatrix}
I_n\\
\Lambda_{2\infty}
\end{bmatrix}, \qquad \bar Y= \begin{bmatrix}
I_n\\
\bar \Lambda_{2\infty}
\end{bmatrix}.
$$
By Step 1,  $\mbox{span}\{Y\}= \mbox{span}\{\bar Y\}$ since they both are equal to the $n$-dimensional stable invariant subspace of
${\mathbb A}_\infty$. Now for each $1\le i\le n$, the $i$th column $Y_i$ of $Y$ is in $\mbox{span}\{\bar Y\}$, which further implies that $Y_i$ is equal to the $i$th column of $\bar Y$. Therefore $\Lambda_{2\infty}=\bar \Lambda_{2\infty}$, and uniqueness follows.
\end{proof}

Theorem \ref{theorem:stab} presents a qualitative criterion on the
existence of a stabilizing solution to the NARE \eqref{ared21}.
Step 2 in the proof further provides a computational procedure.
When (H$_{g}$) holds, one may choose any $n$ basis vectors of the
$n$-dimensional stable invariant subspace to form the matrix in \eqref{U12} and the resulting matrix $U_1\in \mathbb{R}^{n\times n}$
is necessarily invertible. Subsequently one uses \eqref{u12} to
find the stabilizing solution. In fact, there is a simple  means to test whether (H$_{g}$) holds. If the eigenvalues of $\mathbb{A}_\infty$ are strong $(n,n)$ c-splitting, one takes any $n$ basis vectors of
the stable invariant subspace to form a matrix as in \eqref{U12} with $U_1$ to be further checked.
Finally, if $U_1$ is invertible, (H$_{g}$) holds; and (H$_{g}$)   fails otherwise.

\section{Numerical Examples}

\label{sec:nume}

\subsection{Asymptotic Solvability }
Consider the Riccati ODEs \eqref{d11} and \eqref{d21} with $n=1$.

\begin{example} \label{ex:posiexample}
Take the parameters $A=0.2$, $B=G=Q=R=1$, $Q_f=\Lambda_{1\infty}$, $\Gamma=1.2$, $\Gamma_f=0$. Then
  \eqref{d11} gives $\Lambda_1(t)\equiv \Lambda_{1\infty}$ and \eqref{d21} becomes
\begin{align*}
%\dot{\Lambda}_1 &= \Lambda_1^2-0.4\Lambda_1-1, \quad \Lambda_1(T)=\Lambda_{1\infty}, %\\
\dot{\Lambda}_2 &= 2\Lambda_{1\infty}\Lambda_2+\Lambda_2^2-(\Lambda_{1\infty}+1.4\Lambda_2)+1.2, \quad \Lambda_2(T)=0.
\end{align*}
By verifying condition i) in Proposition \ref{prop:global}, we see that  $\Lambda_2$ has a solution on  $[0,T]$ for any $T>0$.
So asymptotic solvability holds.
\end{example}

\begin{example} \label{ex:counterexample}
Take
$Q_f=0$ and  $T=3$. All other parameters are the same as in
Example \ref{ex:posiexample}.
Now \eqref{d11} and \eqref{d21} reduce to
\begin{align*}
\dot{\Lambda}_1 &= \Lambda_1^2-0.4\Lambda_1-1, \quad \Lambda_1(T)=0, \\
\dot{\Lambda}_2 &= 2\Lambda_1\Lambda_2+\Lambda_2^2-(\Lambda_1+1.4\Lambda_2)+1.2, \quad \Lambda_2(T)=0.
\end{align*}
$\Lambda_1$ can be solved explicitly on $[0,T]$.
Fig. \ref{fig:bu}  shows that $\Lambda_2$
does not have a solution on the whole interval $[0,T]$ implying no asymptotic solvability.
\end{example}

Examples \ref{ex:posiexample}
and \ref{ex:counterexample} reveal a significant role of $\Lambda_1$ in affecting the existence interval of $\Lambda_2$.
\begin{example}
 Consider a system with parameters in  Example \ref{ex1}
and $T=35$. Following the notation in Proposition \ref{prop:max}, then
\begin{align*}
 &\hat a=-  0.046447, \quad
 \hat Q = 4.906209 \times 10^{-4}.
\end{align*}
So
$$
0<\hat Q < \hat a^2=0.002157 ,
$$ and $\frac{1}{2\alpha} \ln ({\hat\lambda_2}/{\hat\lambda_1})=33.587095$.
By Proposition \ref{prop:max},
 $\Lambda_2(t)$ has a finite escape time at
 $\hat t\approx 1.4129$. The TPBV problem \eqref{mfgfp1} has a unique solution since $\Phi_{22}( 35,0)\ne 0$.
\end{example}

\subsection{Non-uniqueness }

Consider a system with parameters  in Example \ref{ex1}
and $\eta=\eta_f=1$.
Following the notation in subsection \ref{sec:sub:no},
\begin{align*}
&\hat \Delta = 0.001667,    \quad
c_1=0.005622, \quad
 c_2= 0.087271,
\end{align*}
which satisfy the conditions in Proposition \ref{prop:that},
and  further determine
\begin{align*}
&\hat T=33.587095,\quad
 \hat x_0=-0.394732.
\end{align*}
Fig. \ref{fig:nu} displays $\Phi_{21}(T,0)$ and $\Phi_{22}(T,0)$,
where $T$ is treated as a variable. It shows that $\Phi_{22} (T,0)=0$
when $T=\hat T$.

Now consider the model \eqref{stateXi}--\eqref{costJi}
with time horizon $[0,\hat T]$.
In this case, we  have no
asymptotic solvability since $\Lambda_2$ has the maximal existence interval $(0, \hat T]$.
However, the TPBV problem \eqref{mfgfp0} has an infinite number of solutions.
\begin{figure}[t]
\begin{center}
\begin{tabular}{c}
\psfig{file=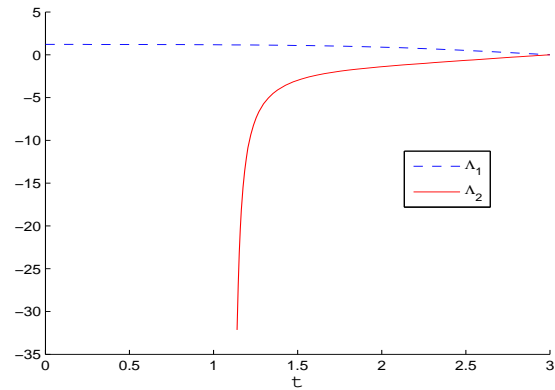, width=3.4in, height=2.2in}
\end{tabular}
\end{center}
\caption{ $\Lambda_2$ has a maximal existence interval small than $[0,T]$ }  \label{fig:bu}
\end{figure}

\begin{figure}[t]
\begin{center}
\begin{tabular}{c}
\psfig{file=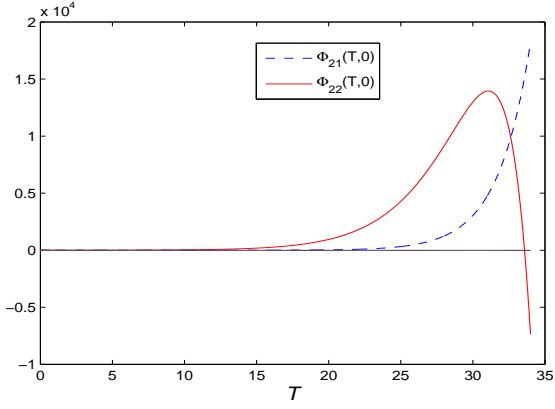, width=3.4in, height=2.2in}
\end{tabular}
\end{center}
\caption{$\Phi_{21}(T,0) $ and $\Phi_{22} (T,0)$ as  a function of $T$. }  \label{fig:nu}
\end{figure}

\subsection{Stabilizing Solution for the NARE \eqref{ared21} }

\begin{example}\label{ex:posivec}
We take
\begin{align}
A=
\begin{bmatrix}
1 & 1\\
-0.5 & 1
\end{bmatrix}, \
B=
\begin{bmatrix}
0\\
 1
\end{bmatrix}, \
\Gamma=
\begin{bmatrix}
0.9 & 0.1\\
0 & 0.9
\end{bmatrix}, \
\eta=
\begin{bmatrix}
1\\
 0
\end{bmatrix}, \nonumber
\end{align}
and $G=Q=I_2$,
$R=1$.
Then \eqref{ared21} has a stabilizing solution
$$
\Lambda_{2\infty} =
\begin{bmatrix}
 16.238985   & 4.099679 \\
   4.132523   & 1.570208
\end{bmatrix}.
$$
In fact, the columns of the matrix
\begin{align}
\begin{bmatrix}
-0.167388 & -0.161703\\
   0.448957 &  0.742511\\
  -0.877636  &  0.418170 \\
   0.013220  &  0.497657
\end{bmatrix} \nonumber
\end{align}
span the stable invariant subspace of $\mathbb{A}_\infty$ as a graph subspace. $\mathbb{A}_\infty$ has
the eigenvalues
\begin{align*}
-1.022350 \pm 0.730733i, \quad
   2.022350 \pm 0.707903i.
\end{align*}
\end{example}

\begin{example}
We take $G=-1.2 I_2$ and all other parameters are the same as
 in Example \ref{ex:posivec}. Then there exists no stabilizing solution
 $\Lambda_{2\infty}$ since in this case ${\mathbb A}_\infty$ has the eigenvalues
\begin{align*}
-1.090328 \pm 0.762501i,\quad
  -0.109672 \pm 0.692413i.
\end{align*}
\end{example}

\section{Conclusion}
\label{sec:con}

This paper investigates an asymptotic solvability problem in LQ
mean field games, and studies its connection with  the
 fixed point approach which involves a TPBV problem.
For  asymptotic solvability
we derive a necessary and sufficient condition via a non-symmetric Riccati ODE. It is shown that asymptotic solvability provides a sufficient condition
for the TPBV problem in the fixed point approach to have a unique solution.
We identify situations for the TPBV problem to be solvable
or have multiple solutions when asymptotic solvability
does not hold. The long time behavior of the non-symmetric Riccati ODE in the asymptotic solvability problem is addressed by studying the stabilizing solution to a non-symmetric algebraic Riccati equation.

The re-scaling technique used in studying asymptotic solvability can be extended to more general models in terms of dynamics, interaction and information patterns \cite{CK17,HWW16,H10,HN11}. This will be reported in  our future work.

\section*{Appendix A: Proof of Theorem \ref{theorem:Prep3}}
\renewcommand{\theequation}{A.\arabic{equation}}
\setcounter{equation}{0}
\renewcommand{\thetheorem}{A.\arabic{theorem}}
\setcounter{theorem}{0}

\begin{lemma}
\label{lem:Prep4}
 We assume that \eqref{DE3_P} has  a solution
 $(\mbP_1(t), \cdots,
\mbP_N(t))$  on $[0,T]$. Then the following holds.

i) ${\mbP}_1(t)$  has the  representation
\begin{align}\label{P_matrix}
{\mbP}_1(t)=\begin{bmatrix}
\Pi_1(t) &\Pi_2(t) &\Pi_2(t)&\cdots &\Pi_2(t) \\
\Pi_2^T(t) &\Pi_3(t) &\Pi_4(t)&\cdots &\Pi_4(t)\\
\Pi_2^T(t) &\Pi_4(t)&\Pi_3(t) &\cdots &\Pi_4(t)\\
\vdots          & \vdots        &  \vdots        &\ddots &\vdots \\
\Pi_2^T(t) &\Pi_4(t) &\Pi_4(t)&\cdots &\Pi_3(t)
\end{bmatrix},
\end{align}
where $\Pi_1$, $\Pi_3$ and $\Pi_4$ are $n\times n$ symmetric matrices.

ii) For $i>1$,  $\mbP_i(t)= J_{1i}^T \mbP_1(t) J_{1i}$.
\end{lemma}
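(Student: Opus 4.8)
The plan is to exploit the permutation symmetry of the Riccati system \eqref{DE3_P} among the $N$ players together with the uniqueness of its solution (Remark \ref{remark:P}). For a permutation $\sigma$ of $\{1,\ldots,N\}$, let $\Sigma_\sigma$ denote the associated $N\times N$ permutation matrix (with $\Sigma_\sigma e_k^N = e_{\sigma(k)}^N$) and set $P_\sigma = \Sigma_\sigma\otimes I_n$, a block-permutation matrix on $\mathbb{R}^{Nn}$ satisfying $P_\sigma^T = P_\sigma^{-1}$. Note that the transposition swapping indices $1$ and $i$ corresponds to $P_\sigma = J_{1i}$. The central claim I would establish is the equivariance relation $\mbP_{\sigma(i)}(t) = P_\sigma \mbP_i(t) P_\sigma^T$ for every $\sigma$ and every $i$; once this is in hand, both parts of the lemma fall out by specializing $\sigma$.

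To prove the equivariance, first I would define $\check{\mbP}_i := P_\sigma^T \mbP_{\sigma(i)} P_\sigma$ and show that $(\check{\mbP}_1,\ldots,\check{\mbP}_N)$ again solves \eqref{DE3_P} with the same terminal data. This rests on three transformation identities for the problem data, whose verification I expect to be the main (if routine) obstacle: (a) $P_\sigma^T \widehat{\mbA} P_\sigma = \widehat{\mbA}$, because $I_N\otimes A$ is unchanged and the mean-field coupling term $\tfrac1N {\bf 1}_{N\times N}\otimes G$ is invariant under $\Sigma_\sigma(\cdot)\Sigma_\sigma^T$; (b) $P_\sigma \mbB_k = \mbB_{\sigma(k)}$, hence $P_\sigma(\mbB_kR^{-1}\mbB_k^T)P_\sigma^T = \mbB_{\sigma(k)}R^{-1}\mbB_{\sigma(k)}^T$; and (c) $\mbK_i P_\sigma^T = \mbK_{\sigma(i)}$, whence $P_\sigma^T \mbQ_{\sigma(i)} P_\sigma = \mbQ_i$ and likewise $P_\sigma^T\mbQ_{\sigma(i)f}P_\sigma = \mbQ_{if}$ for the terminal matrices. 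Inserting $P_\sigma P_\sigma^T = I_{Nn}$ between consecutive factors of each term in the vector field and re-indexing the coupling sums by $j=\sigma^{-1}(k)$ then shows that the $\check{\mbP}_i$ satisfy exactly the same system; the only delicate point is to track the index shift in $\sum_k \mbP_k\mbB_kR^{-1}\mbB_k^T\mbP_i$ and in its companion sum. Uniqueness of the solution then forces $\check{\mbP}_i = \mbP_i$, i.e. $\mbP_{\sigma(i)} = P_\sigma\mbP_i P_\sigma^T$.

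With the equivariance established, part ii) follows immediately by taking $\sigma$ to be the transposition of $1$ and $i$, so that $P_\sigma = J_{1i}$ and $\mbP_i = J_{1i}\mbP_1 J_{1i}^T = J_{1i}^T\mbP_1 J_{1i}$. For part i), I would restrict to permutations $\sigma$ fixing the index $1$; then $\mbP_1 = P_\sigma\mbP_1 P_\sigma^T$, that is, the $(j,k)$ block of $\mbP_1$ equals its $(\sigma(j),\sigma(k))$ block for all such $\sigma$. Since $\sigma$ ranges over all permutations of $\{2,\ldots,N\}$, this collapses the blocks into four values: $[\mbP_1]_{11}=:\Pi_1$, the common value $[\mbP_1]_{1k}=:\Pi_2$ for $k>1$ (with $[\mbP_1]_{j1}=\Pi_2^T$), the common diagonal value $[\mbP_1]_{jj}=:\Pi_3$ for $j>1$, and the common off-diagonal value $[\mbP_1]_{jk}=:\Pi_4$ for distinct $j,k>1$, which is precisely \eqref{P_matrix}. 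Finally, the symmetry of $\mbP_1$ (Remark \ref{remark:P}) gives $\Pi_1=\Pi_1^T$ and $\Pi_3=\Pi_3^T$, and, since both $[\mbP_1]_{jk}$ and $[\mbP_1]_{kj}=[\mbP_1]_{jk}^T$ equal $\Pi_4$, also $\Pi_4=\Pi_4^T$, completing the proof.
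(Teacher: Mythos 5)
Your proposal is correct and follows essentially the same route as the paper's proof: both exploit the permutation symmetry of the coupled Riccati system \eqref{DE3_P} together with uniqueness of its solution to get $\mbP_{\sigma(i)} = P_\sigma \mbP_i P_\sigma^T$, and then read off the block structure of $\mbP_1$ from invariance under permutations fixing index $1$. The only difference is organizational — you establish equivariance for the full symmetric group at once via $P_\sigma = \Sigma_\sigma \otimes I_n$, whereas the paper works with the individual transpositions $J_{23}$, $J_{2k}$, $J_{34}$, $J_{12}$ (which generate the same group), so the mathematical content is identical.
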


\begin{proof} Step 1. It is straightforward to show
\begin{align}
J_{23}^T  \Psi J_{23}= \Psi, \nonumber
\end{align}
where $\Psi$ is any matrix from $\widehat \mbA$, $\mbQ_i$, $\mbQ_{if}$ and  $\mbB_iR^{-1} \mbB_i^T$, $i\ne 2,3$.
 And moreover,
  \begin{align*}
& J_{23}^T \mbQ_2  J_{23} = \mbQ_3,  \qquad
 J_{23}^T \mbQ_3  J_{23} = \mbQ_2  \\
& J_{23}^T \mbQ_{2f}  J_{23} = \mbQ_{3f}   \qquad
 J_{23}^T  \mbQ_{3f} J_{23} = \mbQ_{2f}  \\
&J_{23}^T \mbB_2R^{-1} \mbB_2^T J_{23} = \mbB_3R^{-1} \mbB_3^T,\\
&J_{23}^T \mbB_3R^{-1} \mbB_3^T J_{23} = \mbB_2R^{-1} \mbB_2^T.
\end{align*}

Denote
$J_{23}^T \mbP_i J_{23}=\mbP_i^\dag$ for $1\le i\le N$.
 Multiplying both sides of \eqref{DE3_P} from the left
by $J_{23}^T$ and next from the right by $J_{23}$,
 we obtain
\begin{align*}
\dot \mbP_i^\dag  = & - \mbP_i^\dag J_{23}^T \widehat \mbA J_{23}
-J_{23}^T \widehat \mbA^T J_{23} \mbP_i^\dag \\
& + \mbP_i^\dag \sum_{k=1}^N J_{23}^T \mbB_k R^{-1} \mbB_k^T J_{23}
\mbP_k^\dag   \\
 &
 + \sum_{k=1}^N \mbP_k^\dag  J_{23}^T \mbB_k R^{-1} \mbB_k^T J_{23}
\mbP_i^\dag \\
&-  \mbP_i^\dag  J_{23}^T \mbB_i R^{-1} \mbB_i^T J_{23} \mbP_i^\dag - J_{23}^T \mbQ_i  J_{23}.
\end{align*}
Hence for $i\ne 2, 3$,
\begin{align*}
\dot \mbP_i^\dag  = & - \mbP_i^\dag  \widehat \mbA
- \widehat \mbA^T  \mbP_i^\dag
 + \mbP_i^\dag \sum_{k\notin \{2,3\}}^N  \mbB_k R^{-1} \mbB_k^T
\mbP_k^\dag    \\
&+\mbP_i^\dag ( \mbB_3 R^{-1} \mbB_3^T
\mbP_2^\dag+ \mbB_2 R^{-1} \mbB_2^T
\mbP_3^\dag  ) \\
&+ \sum_{k\notin\{2,3\}}^N \mbP_k^\dag   \mbB_k R^{-1} \mbB_k^T
\mbP_i^\dag \\
&+(\mbP_2^\dag   \mbB_3 R^{-1} \mbB_3^T
 +\mbP_3^\dag   \mbB_2 R^{-1} \mbB_2^T )
\mbP_i^\dag   \\
&-  \mbP_i^\dag  \mbB_i R^{-1} \mbB_i^T\mbP_i^\dag  -  \mbQ_i,
\end{align*}
where $\mbP_i^\dag (T)=\mbQ_{if}$. Similarly,
we can write the equations for
$\dot \mbP_2^\dag$ and $\dot \mbP_3^\dag$ for which we omit the details.  Note that $\mbP_2^\dag(T) = \mbQ_{3f}$ and $\mbP_3^\dag(T) = \mbQ_{2f}$. Subsequently, we list the
$N$ equations by the order of
$\dot\mbP_1^\dag,\ \dot\mbP_3^\dag,\ \dot\mbP_2^\dag ,\ \dot\mbP_4^\dag
, \ \cdots, \ \dot\mbP_N^\dag$, and it turns out that
$$(J_{23}^T \mbP_1 J_{23}, \ J_{23}^T \mbP_3 J_{23},\  J_{23}^T \mbP_2 J_{23},\ J_{23}^T \mbP_4 J_{23}, \cdots, J_{23}^T \mbP_N J_{23})$$
satisfies \eqref{DE3_P}  as $(\mbP_1(t), \cdots,
\mbP_N(t))$ does.

Step 2.  For $1\le i\le N$,  denote
$\mbP_i= (\mbP_i^{jk})_{1\le j,k\le N}, $
where each $\mbP_i^{jk}$ is an $n\times n$ matrix. By Step 1,
$
\mbP_1= J_{23}^T \mbP_1 J_{23},
$
which implies
\begin{align}
\mbP_1^{12}=\mbP_1^{13},\quad  \mbP_1^{22}=P_1^{33},
\quad \mbP_1^{23}=\mbP_1^{32}.\label{P123}
\end{align}
Repeating the above procedure by using $J_{2k}$, $k\ge 4$, in place of $J_{23}$, we obtain
$$
\mbP^{12}_1=\mbP_1^{13}=\cdots=\mbP_1^{1N}, \qquad \mbP_1^{22}=\mbP_1^{33}=\cdots =\mbP_1^{NN}.
$$
We similarly obtain $\mbP_1= J_{34}^T \mbP_1 J_{34}$, and this gives
$$
\mbP^{23}_1=\mbP^{24}_1.
$$
Repeating a similar argument,
we can check all other remaining off-diagonal submatrices. Since $\mbP_1$ is symmetric (also see Remark \ref{remark:P}), $(\mbP_1^{23})^T= \mbP_1^{32}$,
which implies that $\mbP_1^{23}$  is symmetric by \eqref{P123}. By the above method we can show that the off-diagonal submatrices $P_1^{ij}$, where $i\ne j$ and $2\le i\le N$, $2\le j\le N$, are equal and symmetric.
Therefore we obtain the representation of $\mbP_1$.

Step 3.  We can verify that
$$(J_{12}^T \mbP_2 J_{12}, \ J_{12}^T \mbP_1 J_{12},\  J_{12}^T \mbP_3 J_{12}, \cdots, J_{12}^T \mbP_N J_{12})$$ satisfies \eqref{DE3_P} as
$(\mbP_1(t), \cdots,
\mbP_N(t))$ does. Hence $\mbP_2=J_{12}^T \mbP_1 J_{12}$.
All other cases can be similarly checked.
\end{proof}

{\it Proof of Theorem \ref{theorem:Prep3}:}

By Lemma \ref{lem:Prep4}, we have
\begin{align}
&\dot{\Pi_1}(t)  =   \Pi_1 M\Pi_1
                          +(N-1)(\Pi_2M\Pi_2
                          +\Pi_2^T M\Pi_2^T) \nonumber \\
                          &\quad\qquad - \Big(\Pi_1 (A+\frac{G}{N})
                          +(A^T+\frac{G^T}{N})\Pi_1 \Big)\nonumber \\
                          &\quad\qquad-(1-\frac{1}{N}) (\Pi_2G
                          +G^T\Pi_2^T) \nonumber \\
                          &\quad\qquad -({I}-\frac{\Gamma^T}{N})Q({I}-\frac{\Gamma}{N}),
                        \label{d1}   \\
                  &\Pi_1(T)=({I}-\frac{\Gamma_f^T}{N})
                 Q_f({I}-\frac{\Gamma_f}{N}),\nonumber
              \end{align}
and
\begin{align}
&\dot{\Pi_2}(t) =  \Pi_1 M\Pi_2+\Pi_2M\Pi_1
+\Pi_2^TM\Pi_3\nonumber \\
             &\quad\qquad+(N-2)\Pi_2M\Pi_2
           +(N-2)\Pi_2^TM\Pi_4 \nonumber \\
             &\quad\qquad-\Big(\Pi_1 \frac{G}{N}+\frac{G^T}{N}\Pi_3
          +\frac{N-2}{N}G^T\Pi_4 \nonumber \\
       &\quad\qquad+\Pi_2(A+\frac{N-1}{N}G)
      +(A^T+\frac{G^T}{N})\Pi_2\Big) \nonumber \\
& \quad\qquad+ ({I}-\frac{\Gamma^T}{N})Q\frac{\Gamma}{N},\label{d2} \\
&\Pi_2(T)=-({I}-\frac{\Gamma_f^T}{N})Q_f\frac{\Gamma_f}{N},\nonumber
\end{align}
and
\begin{align}
&\dot{\Pi_3}(t) = \Pi_2^TM\Pi_2+\Pi_3M\Pi_1
+\Pi_1 M\Pi_3\nonumber \\
                       &\quad\qquad+(N-2)(\Pi_4M\Pi_2
                       +\Pi_2^TM\Pi_4) \nonumber\\
                       &\quad\qquad - \Big( \frac{1}{N}(\Pi_2^TG
                       +G^T\Pi_2)
                        \nonumber \\
                       &\qquad\qquad+\Pi_3(A+\frac{G}{N})
                       +(A^T+\frac{G^T}{N})\Pi_3\nonumber \\
                       & \qquad\qquad +\frac{N-2}{N}(\Pi_4G +
                       G^T\Pi_4)
                        \Big)
 - \frac{\Gamma^T}{N}Q\frac{\Gamma}{N}, \label{d3} \\
& \Pi_3(T)=\frac{\Gamma_f^T}{N}Q_f\frac{\Gamma_f}{N},\nonumber
\end{align}
and
\begin{align}
&\dot{\Pi_4}(t) =\Pi_2^TM\Pi_2
            +\Pi_4 M\Pi_1
             +\Pi_1 M\Pi_4
                       +\Pi_3M\Pi_2
                        \nonumber \\
                       &\quad\qquad
                      +\Pi_2^TM\Pi_3  +(N-3)(\Pi_4 M\Pi_2
                       +\Pi_2^TM\Pi_4 ) \nonumber \\
                       &\quad\qquad - \Big( \frac{1}{N}(\Pi_2^T G
                       +G^T\Pi_2
                       +\Pi_3G
                       +G^T\Pi_3) \nonumber \\
                       &\quad\qquad+ \Pi_4(A+\frac{N-2}{N}G)
                       +(A^T+\frac{N-2}{N}G^T)\Pi_4 \Big)\nonumber \\
&\quad\qquad - \frac{\Gamma^T}{N}Q\frac{\Gamma}{N}, \label{d4} \\
& \Pi_4(T)=\frac{\Gamma_f^T}{N}Q_f\frac{\Gamma_f}{N}.\nonumber
\end{align}

The last two ODEs lead to
\begin{align*}
\frac{d}{dt}{(\Pi_3-\Pi_4)}& = (\Pi_3-\Pi_4)(M\Pi_1-M\Pi_2-A)\\
&+ (\Pi_1 M -\Pi_2^T M -A^T) (\Pi_3-\Pi_4),
\end{align*}
where $\Pi_3(T)-\Pi_4(T)=0$. This can be viewed as a linear ODE once $\Pi_1$ and $\Pi_2$ are fixed.  Therefore $\Pi_3\equiv \Pi_4$ on $[0,T]$.
This completes the proof. \endproof

\section*{Appendix B: Proof of Theorem \ref{theorem:depen}}
\renewcommand{\theequation}{B.\arabic{equation}}
\setcounter{equation}{0}
\renewcommand{\thetheorem}{B.\arabic{theorem}}
\setcounter{theorem}{0}

\begin{proof}
i)
 We can find a constant $C_z$ such that $\sup_{0\leq t\leq T} |x^z(t)|\leq C_z$, and $\sup_{0<\epsilon\le 1}|z_\epsilon|\le C_z$.
Fix the open ball $B_{2C_z}(0)$. For $x, y\in B_{2C_z}(0)$ and $t\in [0,T]$, we have
\begin{align*}
|\phi(t, x)-\phi(t, y)|\leq \mbox{Lip} (2C_z) |x-y|.
\end{align*}

For each $\epsilon\le 1$,
by (A1)--(A3), \eqref{dot_y} has a solution $y^{\epsilon}(t)$  defined either (a) for all $t\in[0, T]$ or (b) on a maximal interval $[0, t_{\max})$ for some $0<t_{\max}\le T$.

Below we  show that for all small $\epsilon$, (b) does not occur.
We prove by contradiction. Suppose
for any small $\epsilon_0>0$, there exists $0<\epsilon<\epsilon_0$ such that (b) occurs with the corresponding
 $0< t_{\max} \le T$.
Since $[0, t_{\max})$ is the maximal existence interval,
we have $\lim_{t\uparrow t_{\max}}|y^\epsilon(t)|=\infty$ \cite{H69}.
Therefore for some $0<t_m<t_{\max}$,
\begin{align}
y^{\epsilon}(t_m)\in \partial B_{2C_z}(0) \label{con_3},
\end{align}
and
\begin{align}
y^{\epsilon}(t)\in B_{2C_z}(0), \quad \forall \ 0\le t<t_m. \label{con_4}
\end{align}
For $t< t_{\max}$,
 we have
\begin{align*}
&y^{\epsilon}(t)-x^z(t)=z_{\epsilon}-z+\int_0^t \zeta(\tau) d\tau,
\end{align*}
where $
\zeta(\tau) =f(\tau, y^{\epsilon}(\tau))+g(\epsilon,
\tau, y^{\epsilon}(\tau))-f(\tau, x^z(\tau))$.
It follows from (A3) that
\begin{align*}
|\zeta(\tau)| &= |\zeta(\tau)-g(\epsilon,
\tau, x^z(\tau))+ g(\epsilon,
\tau, x^z(\tau))|\\
& \le \mbox{Lip}(2C_z) |y^\epsilon(\tau) - x^z(\tau)|
+|g(\epsilon,\tau, x^z(\tau))|.
\end{align*}
Now for $0\le t<t_m$,
\begin{align*}
|y^{\epsilon}(t)-x^z(t)| \leq&\ |z_{\epsilon}-z|+\delta_{\epsilon}\\
 &+\int_0^t \mbox{Lip} (2C_z)|y^{\epsilon}(\tau)-x^z(\tau)|d\tau.
\end{align*}
Note that $\delta_{\epsilon}=\int_0^T|g\big(\epsilon, \tau, x^z(\tau)\big)|d\tau\rightarrow 0$ as $\epsilon\rightarrow 0$.
By Gronwall's lemma,
\begin{align*}
|y^{\epsilon}(t)-x^z(t)|\leq (\delta_{\epsilon}+|z_{\epsilon}-z|)e^{\mbox{Lip}(2C_z)t}
\end{align*}
for all $t\leq t_m$.
We can find $\bar{\epsilon}>0$ such that for all $\epsilon\leq \bar{\epsilon}$,
\begin{align*}
(\delta_{\epsilon}+|z_{\epsilon}-z|)e^{\mbox{Lip}(2C_z)T}<\frac{C_z}{3}.
\end{align*}
Then for all $0\leq t\leq t_m$,
$y^{\epsilon}(t)\in B_{3C_z/2}(0)$,
which is a contradiction to \eqref{con_3}.
We conclude for all $0<\epsilon \leq \bar{\epsilon}$, $y^{\epsilon}$ is defined on $[0, T]$. Next, \eqref{oe} follows readily.

ii) We have
\begin{align}
y^{\epsilon_i}(t)=z_{\epsilon_i}+\int_0^t \Big[f\big(\tau, y^{\epsilon_i}(\tau)\big)+g\big(\epsilon, \tau, y^{\epsilon_i}(\tau)\big)\Big]d\tau, \label{con_5}
\end{align}
and
\begin{align}
&|f\big(\tau, y^{\epsilon_i}(\tau)\big)
+g\big(\epsilon, \tau, y^{\epsilon_i}(\tau)\big)|\nonumber  \\
&\leq \mbox{Lip} (C_2)|y^{\epsilon_i}(\tau)|
 +|f(\tau, 0)+g(\epsilon, \tau, 0)| \nonumber \\
&\leq \mbox{Lip} (C_2)|y^{\epsilon_i}(\tau)|+C_1 \nonumber \\
& \leq \mbox{Lip}(C_2) C_2 +C_1 \label{con_6},
\end{align}
where $C_1$ is given in (A1).

By \eqref{con_5}--\eqref{con_6}, the functions $\{y^{\epsilon_i}(\cdot), i\geq 1\}$ are uniformly bounded and equicontinuous.
By Arzel\`{a}-Ascoli theorem \cite{Y80}, there exists a subsequence $\{y^{\epsilon_{i_j}}(\cdot), j\ge 1\}$ such that $y^{\epsilon_{i_j}}$ converges to $y^*\in C\big([0, T], \mathbb{R}^K\big)$ uniformly on $[0, T]$, as $j\to \infty$.
Hence,
\begin{align*}
y^*(t)=z+\int_0^t f\big(\tau, y^*(\tau)\big)+g\big(\epsilon, \tau, y^*(\tau)\big)d\tau
\end{align*}
for all $t\in [0, T]$. So \eqref{dot_x} has a solution.
\end{proof}

The  proof in part i) follows the method in
 \cite[sec. 2.4]{P96} and \cite[pp. 486]{S98}.

\section*{Appendix C}
\renewcommand{\theequation}{C.\arabic{equation}}
\setcounter{equation}{0}
\renewcommand{\thetheorem}{C.\arabic{theorem}}
\setcounter{theorem}{0}

{\it  Proof of Theorem \ref{theorem:iff}:}

Taking $\Pi_3=\Pi_4$ into account,
we rewrite the  system of \eqref{d1}, \eqref{d2} and
\eqref{d3}
 by use of a set of new variables
\begin{align*}
&\Lambda_1^N=\Pi_1(t), \ \Lambda_2^N=N\Pi_2(t),
\ \Lambda_3^N=N^2\Pi_3(t).\nonumber
\end{align*}
Here and hereafter $N$ is used as a superscript in various places.
This should be clear from the context.
We can determine functions $g_k$, $1\le k\le 3$, and obtain
\begin{align}
&\dot{\Lambda}_1^N = \Lambda_1^NM \Lambda_1^N-(\Lambda_1^NA+A^T\Lambda_1^N)
-Q \nonumber \\
 &\qquad
+ g_1(1/N, \Lambda_1^N, \Lambda_2^N),\label{Nd1_1} \\
& \Lambda^N_1(T)= (I-\frac{\Gamma_f^T}{N})
                 Q_f({I}-\frac{\Gamma_f}{N}),  \nonumber %
\end{align}
\begin{align}
&\dot{\Lambda}_2^N =  \Lambda_1^NM \Lambda_2^N+
\Lambda_2^N M \Lambda_1^N+
\Lambda_2^NM\Lambda_2^N \nonumber\\
&\qquad- (\Lambda_1^NG + \Lambda_2^N (A+G)
+A^T\Lambda_2^N) +Q\Gamma\nonumber \\
&\qquad + g_2(1/N,  \Lambda_2^N, \Lambda_3^N),  \label{Nd2_1} \\
&\Lambda^N_2(T)=-({I}-\frac{\Gamma_f^T}{N})Q_f\Gamma_f,  \nonumber
\end{align}
\begin{align}
&\dot{\Lambda}_3^N = (\Lambda_2^N)^TM \Lambda_2^N + \Lambda_3^NM \Lambda_1^N + \Lambda_1^NM \Lambda_3^N \nonumber \\
& \qquad +\Lambda_3^NM \Lambda_2^N+(\Lambda_2^N)^TM\Lambda_3^N\nonumber \\
                             &\qquad - \big((\Lambda_2^N)^T G+G^T\Lambda_2^N+\Lambda_3^N (A+G)+(A^T +G^T)\Lambda_3^N \big) \nonumber \\
                             &\qquad  - \Gamma^T Q\Gamma
                              +  g_3(1/N, \Lambda_2^N, \Lambda_3^N),  \label{Nd3_1}  \\
& \Lambda_3^N(T)=\Gamma_f^T Q_f\Gamma_f . \nonumber
\end{align}

In particular, we can determine
\begin{align*}
g_1= &\frac{1}{N} (1-\frac{1}{N})( \Lambda_2^N M \Lambda_2^N +(\Lambda_2^N)^T M (\Lambda_2^N)^T )\\
 & -\frac{1}{N} (\Lambda_1^N G + G^T\Lambda_1^N)
 - \frac{1}{N} (1-\frac{1}{N}) (\Lambda_2^N G+ G^T (\Lambda_2^N)^T)\\
 &+ \frac{1}{N}(\Gamma^T Q+Q\Gamma)-\frac{1}{N^2}\Gamma^T Q \Gamma.
\end{align*}
The expressions of $g_2$ and $ g_3$ can be determined in a similar way and the detail is omitted here.

Letting $N\rightarrow \infty$ in \eqref{Nd1_1}--\eqref{Nd3_1}, this gives a limiting  ODE system consisting of \eqref{d11}, \eqref{d21} and
\eqref{d3_1}.

If \eqref{d21} has a unique solution on $[0,T]$, we can uniquely solve $\Lambda_3$ from a linear ODE \eqref{d3_1}.
In view of $g_1, g_2, g_3$ and the terminal conditions in  \eqref{Nd1_1}--\eqref{Nd3_1},
by Theorem \ref{theorem:depen} i) and Remark \ref{remark:tc},
 there exists $N_0$ such that for all $N\ge N_0$, the system
 \eqref{Nd1_1}--\eqref{Nd3_1} has a solution on $[0,T]$
 and
 \begin{align}
 \sup_{N\ge N_0, 0\le t\le T} (|\Lambda_1^N| +|\Lambda_2^N|+ |\Lambda_3^N|)
  <\infty, \label{La123}
 \end{align}
which implies \eqref{main_con2} and so \eqref{main_conl1}. Consequently, asymptotic solvability follows.

Conversely, if asymptotic solvability holds,
there exists $N_0$ such that the system
 \eqref{Nd1_1}--\eqref{Nd3_1}
 has a solution on $[0,T]$
 for all $ N\ge N_0$ and  \eqref{La123} holds. By Theorem  \ref{theorem:depen} ii), \eqref{d21} has a unique solution on $[0,T]$.
This completes the proof of  Theorem \ref{theorem:iff}. \endproof

{\it Proof of Proposition \ref{prop:sr}:}

 We can check that
 $$(J_{23}^T\mbS_1,\ J_{23}^T\mbS_3,\ J_{23}^T\mbS_2,\ J_{23}^T\mbS_4, \cdots, J_{23}^T\mbS_N)$$
satisfies \eqref{DE3_S}.
Hence $\mbS_1= J_{23}^T \mbS_1$. We can further show $\mbS_1= J_{12}^T \mbS_2$. By the method in the proof of Lemma \ref{lem:Prep4}, we obtain the  representation for $\mbS_i$.
Next, for each $i$ we have
\begin{align*}
\begin{cases}
\dot{\mbr}_i(t)= \theta_1^T M \theta_1 +2(N-1) \theta_2^TM \theta_1
 -\mbox{Tr}( D^T \Pi_1 D)\\
 \qquad\quad -(N-1) \mbox{Tr}( D^T \Pi_3 D) -\eta^T Q \eta  , \\
  \mbr_i(T)= \eta_f^T Q_f \eta_f,
\end{cases}
\end{align*}
and therefore $\mbr_1=\cdots =\mbr_N$.
 \endproof

{\it Proof of Proposition \ref{prop:sichi}:}

 Recalling $M=BR^{-1}B^T$, by Proposition \ref{prop:sr} we derive
\begin{align}\label{theta_1}
\begin{cases}
\dot{\theta_1}(t) = \Pi_1M\theta_1+(N-1)(\Pi_2M
\theta_1+\Pi_2^TM\theta_2)\\
                            \qquad \quad-\big(A^T+\frac{G^T}{N}\big)
                            \theta_1-
                            \frac{N-1}{N}G^T\theta_2  \\
                           \qquad\quad+\big(I-\frac{
                           \Gamma^T}{N}\big)Q\eta,   \\
                            \theta_1(T)=-(I-\frac{
                           \Gamma_f^T}{N})Q_f\eta_f,
                           \end{cases}
                           \end{align}
                           and
                           \begin{align}\label{theta_2}
                           \begin{cases}
\dot{\theta_2}(t) = \big(\Pi_2^T+(N-1)\Pi_3\big)M\theta_1\\
                            \qquad\quad+\big(\Pi_1+(N-2)\Pi_2^T\big)M
                            \theta_2-\frac{1}{N}G^T\theta_1  \\
                            \qquad\quad-\big(A^T+\frac{N-1}{N}G^T\big)\theta_2
                            -\frac{1}{N}\Gamma^TQ\eta,   \\
                             \theta_2(T)=\frac{1}{N}\Gamma_f^T Q_f \eta_f. \end{cases}
\end{align}
Based on \eqref{theta_1}--\eqref{theta_2},  we may write
the ODEs of ${\chi}_1^N(t) $ and ${\chi}_2^N(t)$.
Under asymptotic solvability, we uniquely solve $(\Lambda_1, \Lambda_2, \Lambda_3, \chi_1, \chi_2)$ on $[0,T]$.
 We obtain \eqref{sichi} by writing the ODE system of
$(\Lambda_1^N, \Lambda_2^N, \Lambda_3^N, \chi_1^N, \chi_2^N)$ and next applying Theorem~\ref{theorem:depen}.
The proposition follows.~\endproof

\section*{Appendix D }
\renewcommand{\theequation}{D.\arabic{equation}}
\setcounter{equation}{0}
\renewcommand{\thetheorem}{D.\arabic{theorem}}
\setcounter{theorem}{0}

{\it  Proof of Proposition \ref{prop:global}:}

 i) If  $\hat Q\le 0$, \eqref{richa} is the Riccati ODE in a standard optimal control problem \cite{S98}, and so has a unique solution on $[0,T]$.

 ii)
The characteristic equation of \eqref{odeu} has solutions
$\hat\lambda_1=\hat a+ \alpha$,  $\hat\lambda_2=\hat a-\alpha$,
 where $\alpha=\sqrt{\hat \Delta}$.

   If  $\alpha>0$, we write
$u=C_1e^{\hat\lambda_1 t}+e^{\hat\lambda_2 t}$.
Then
$u'=C_1\hat\lambda_1 e^{\hat\lambda_1 t}+\hat\lambda_2 e^{\hat\lambda_2 t}$.
By $\Lambda_2(T)=0$, we obtain
$C_1=-({\hat\lambda_2}/{\hat\lambda_1}) e^{-2\alpha T}$ and
\begin{align*}
&\Lambda_2(t)=\frac{\hat\lambda_1\hat\lambda_2
\Big(e^{\alpha(T-t)}-e^{-\alpha(T-t)}\Big)}{\hat\lambda_2 e^{-\alpha(T-t)}-
\hat\lambda_1 e^{\alpha(T-t)}}\\
&\qquad\  =\frac{\hat Q \Big(e^{\alpha(T-t)}-e^{-\alpha(T-t)}\Big)}{\hat\lambda_2 e^{-\alpha(T-t)}-\hat\lambda_1 e^{\alpha(T-t)}},
\end{align*}
which exists on $[0,T]$.

 If $\alpha=0$,
 we write the solution of \eqref{odeu} as
$u=C_1 e^{\hat at}+te^{\hat at}$. This gives
$u'=C_1 \hat a e^{\hat at}+e^{\hat at}+t\hat ae^{\hat at}$.
Since $\Lambda_2(T)=0$, $C_1 \hat a+1+T \hat a=0$.       Therefore,
\begin{align*}
\Lambda_2(t)=-\frac{u'}{u} &= -\frac{C_1 \hat a+1+t\hat a}{C_1+t}
                           =\frac{\hat a^2(T-t)}{\hat a(t-T)-1},
\end{align*}
which exists on $[0,T]$.
\endproof

{\it  Proof of Proposition \ref{prop:max}:}

i) The computation is similar to the case in Proposition \ref{prop:global}
and we omit the details.

ii) The characteristic equation of \eqref{odeu} has solutions $\hat\lambda_{1,2}=\hat a\pm\beta i$. To solve \eqref{odeu},
we take
$u=C_1 e^{\hat at}\cos \beta t+e^{\hat at}\sin \beta t$.
 Now
\begin{align*}
u'=\hat ae^{\hat at}(C_1\cos\beta t+\sin\beta t)+\beta e^{\hat at}(-C_1\sin\beta t+\cos\beta t).
\end{align*}

Since $\Lambda_2(T)=0$, we determine
\begin{align*}
C_1 = & \frac{\hat a\sin\beta T+\beta\cos\beta T}{-\hat a\cos\beta T+\beta\sin\beta T} \\
       = &\frac{\cos\theta\sin\beta T+\sin\theta\cos\beta T}{-\cos\theta\cos\beta T+\sin\theta\sin\beta T} \\
=&        -\frac{\sin(\beta T+\theta)}{\cos(\beta T+\theta)}.
       \end{align*}
For this moment we suppose $\cos(\beta T+\theta)\ne 0 $ so that $C_1$ above is well defined.
Subsequently,
\begin{align*}
C_1\cos\beta t+\sin\beta t
=& \frac{\sin(\beta t-\beta T-\theta)}{\cos(\beta T+\theta)}, \\
-C_1\sin\beta t+\cos\beta t
=&\frac{\cos(\beta t-\beta T-\theta)}{\cos(\beta T+\theta)}.
\end{align*}
Therefore,
\begin{align}
\Lambda_2(t) =&- \frac{\hat a\sin(\beta t-\beta T-\theta)+\beta\cos(\beta t-\beta T-\theta)}{\sin(\beta t-\beta T-\theta)} \nonumber \\
      =& \frac{\sqrt{\hat{Q}}\sin\beta(t-T)}{\sin
      \big(\beta(T-t)+\theta\big)}. \label{lam2qsin}
\end{align}
If $\cos(\beta T+\theta)= 0$ occurs, we start by taking
$u= e^{\hat at}\cos \beta t+C_2e^{\hat at}\sin \beta t$.
We may determine $C_2=0$ and still obtain the same form of $\Lambda_2$
as in \eqref{lam2qsin}. \endproof

\section{Acknowledgment}
We would like to thank  the anonymous reviewers and the Associate Editor for
very helpful suggestions.

\bibliographystyle{plain}
\bibliography{TAC18-565bib}

\begin{IEEEbiography}{Minyi Huang} (S'01-M'04)
received the B.Sc. degree from Shandong University, Jinan, Shandong,
China, in 1995, the M.Sc. degree from the Institute
of Systems Science, Chinese Academy of Sciences,
Beijing, in 1998, and the Ph.D. degree from the
Department of Electrical and Computer Engineering,
McGill University, Montreal, QC, Canada, in 2003,
all in systems and control.

He was a Research Fellow first in the Department
of Electrical and Electronic Engineering, the University
of Melbourne, Melbourne, Australia, from February
2004 to March 2006, and then in the Department of Information Engineering,
Research School of Information Sciences and Engineering, the Australian
National University, Canberra, from April 2006 to June 2007. He joined
the School of Mathematics and Statistics, Carleton University, Ottawa, ON,
Canada as an Assistant Professor in July 2007, where he is now a
Professor. His research interests include mean field stochastic
control and dynamic games, multi-agent control and computation in distributed
networks with applications.
\end{IEEEbiography}

% if you will not have a photo at all:
\begin{IEEEbiography}{Mengjie Zhou}
received the B.Sc. degree in applied mathematics from  Shanghai Jiao Tong University, Shanghai, China, in 2014, the M.Sc. degree in applied mathematics from  Western University, London, ON, Canada, in 2015, and the M.Sc. degree in probability and statistics from  Carleton University, Ottawa, ON, Canada, in 2017. She is currently pursuing the Ph.D. degree in probability and statistics under the supervision of Dr. Minyi Huang at  Carleton University.

   Her research interests include mean field stochastic control and Markov decision processes.
\end{IEEEbiography}

\end{document}